\newcommand{\R}{\mathbb{R}}
\newcommand{\M}{\mathbb{M}}
\newcommand{\Sphere}{\mathbb{S}}
\newcommand{\lp}{\left(}
\newcommand{\rp}{\right)}
\newcommand{\bO}{\mathcal{O}}
\newcommand{\ep}{\varepsilon}
\newcommand{\vect}[1]{\mathbf{#1}}
\newcommand{\vf}{\vect{f}}
\newcommand{\vg}{\vect{g}}
\newcommand{\vx}{\vect{x}}
\newcommand{\vy}{\vect{y}}
\newcommand{\vP}{\vect{P}}
\newcommand{\vp}{\vect{p}}
\newcommand{\vH}{\vect{H}}
\newcommand{\vI}{\vect{I}}
\newcommand{\vT}{\vect{T}}
\newcommand{\vL}{\vect{L}}
\newcommand{\vn}{\vect{n}}
\newcommand{\bfe}{\vect{e}}
\newcommand{\coef}{c}
\newcommand{\uu}{u_X}
\newcommand{\dt}{\Delta t}
\newcommand{\vxi}{\boldsymbol{\xi}}
\newcommand{\laps}{\Delta_{\M}}
\newcommand{\Mlap}{\Delta_{\M}}
\newcommand{\Mgrad}{\nabla_{\M}}
\newcommand{\grad}{\nabla}
\newcommand{\MEgrad}{\vP\nabla}
\newcommand{\Mdiv}{\Mgrad\cdot}
\newcommand{\pder}[1]{\dfrac{\partial}{\partial#1}}
\newcommand{\pgradx}{\mathcal{G}^{x}}
\newcommand{\pgrady}{\mathcal{G}^{y}}
\newcommand{\pgradz}{\mathcal{G}^{z}}
\newcommand{\sgrad}{\nabla_{\M}}
\newcommand{\ds}{\displaystyle}
\newcommand{\proj}{\mathbf{P}}
\newtheorem{theorem}{Theorem}
\newtheorem{lemma}[theorem]{Lemma}
\newtheorem{proposition}[theorem]{Proposition}
\title{A High-Order Kernel Method for Diffusion and Reaction-Diffusion Equations on Surfaces}
\author{Edward Fuselier\\
Department of Mathematics and Computer Science\\
High Point University\\
High Point, NC 27262\\
\url{efuselie@highpoint.edu}\\
\\
Grady B. Wright
\thanks{Corresponding author. Research supported by grant DMS-0934581, and DMS-0540779 from the National Science Foundation.}
\\
Department of Mathematics \\
Boise State University \\
Boise, ID 83725-1555\\
\url{gradywright@boisestate.edu}
}
\begin{document}

\maketitle

\begin{abstract}
In this paper we present a high-order kernel method for numerically solving diffusion and reaction-diffusion partial differential equations (PDEs) on smooth, closed surfaces embedded in $\R^d$.   For two-dimensional surfaces embedded in $\R^3$, these types of problems have received growing interest in biology, chemistry, and computer graphics to model such things as diffusion of chemicals on biological cells or membranes, pattern formations in biology, nonlinear chemical oscillators in excitable media, and texture mappings.  Our kernel method is based on radial basis functions (RBFs) and uses a semi-discrete approach (or the method-of-lines) in which the surface derivative operators that appear in the PDEs are approximated using collocation.  The method only requires nodes at ``scattered'' locations on the surface and the corresponding normal vectors to the surface.  Additionally, it does not rely on any surface-based metrics and avoids any intrinsic coordinate systems, and thus does not suffer from any coordinate distortions or singularities.  We provide error estimates for the kernel-based approximate surface derivative operators and numerically study the accuracy and stability of the method.   Applications to different non-linear systems of PDEs that arise in biology and chemistry are also presented.  

\medskip

\noindent {\bf Key words:}  radial basis functions $\cdot$ mesh-free $\cdot$ manifold $\cdot$ collocation $\cdot$ method-of-lines $\cdot$ pattern formation $\cdot$ Turing patterns $\cdot$ spiral waves.

\medskip

\noindent {\bf Mathematics Subject Classification (2000)}  
58J45 \and 35K57 \and 41A05 \and 41A25 \and 41A30 \and 41A63 \and 65D25 \and 65M20 \and 65M70 \and 46E22 \and 35B36
\end{abstract}
			
\section{Introduction}
Kernel methods such as those based on radial basis functions (RBFs) are becoming increasingly popular for numerically solving partial differential equations (PDEs) because they are geometrically flexible, algorithmically accessible, and can be highly accurate.  Since Kansa's pioneering work~\cite{Kan90_2}, there have been many successful applications of kernel methods to various types of PDEs defined on planar regions in two and higher dimensions (see, for example,~\cite[Ch. 38--45]{Fasshauer:2007} and the references therein).   More recently, these methods have been developed and applied to PDEs defined on the surface of a sphere~\cite{QTLG2005,FlyerWright07,FoPi08,FlyerLehto2010,FlyerWright09}.

In this paper, we present a high-order kernel method using RBFs for numerically solving certain PDEs defined on more general surfaces.   In particular, we focus on diffusion and reaction-diffusion equations on smooth, closed embedded submanifolds $\M \subset \R^d$.   In the case of two species, the prototypical form of the latter type of these PDEs is 
\begin{align}
\begin{split}
\frac{\partial u}{\partial t} =& \delta_u \laps u + f_u(t,u,v), \\
\frac{\partial v}{\partial t} =& \delta_v \laps v + f_v(t,u,v), 
\end{split}
\label{eq:reaction_diffusion_system}
\end{align}
where $u,v:\M \longrightarrow \R$, $\delta_u, \delta_v \geq 0$, $f_u,f_v$ are (possibly non-linear) scalar functions, and $\laps$ is the Laplace-Beltrami operator for the surface.  For two dimensional surfaces embedded in three dimensional space, systems like \eqref{eq:reaction_diffusion_system} have received growing interest to model such things as diffusion of chemicals on biological cells or membranes~\cite{Schwartz:2005,Sbalzarini:2006}, pattern formations in biology~\cite{ChaplainEtAl2001,VareaEtAl1999}, nonlinear chemical oscillators in excitable media~\cite{DavydovEtAl2000,Grindrod08041991,ManzEtAl2003}, and texture mappings in computer graphics~\cite{Turk:1991}.  

In the past decade, many methods have been developed for PDEs like \eqref{eq:reaction_diffusion_system} on surfaces.  Nearly all of these techniques can be classified into two types, \emph{intrinsic methods} and \emph{embedded, narrow-band methods}.  The former methods use coordinates intrinsic to the surface and a surface-based mesh to discretize the differential operators (e.g.~\cite{Dziuk1988,DziukElliot2007,DziukElliot2007b,LiuXuZhang2008,Xu,Calhoun:2009,Stam:2003,LandsbergVoigt2010}), while the latter methods extend the entire PDE in $\R^3$ in a narrow band around the surface and then modify the differential operators so that the solution is restricted to the surface (e.g.~\cite{BertalmioEtAl2001,BergdorfEtAl2010,Deckelnick01042010,Schwartz:2005,Sbalzarini:2006,Adalsteinsson:2003,Greer2006,RuuthMerriman2008,MacDondaldRuuth2008,MacDondaldRuuth2009,Piret2012}). Intrinsic methods have the benefit that the resulting discretization scheme is consistent with the dimension of the original problem.  However, properly dealing with the inherent coordinate distortions or singularities that arise in the metric terms of the surface differential operators can be difficult, and these methods are generally limited to low orders of accuracy.  Embedded, narrow-band methods have the benefit that the surface differential operators are posed in extrinsic coordinates so that all coordinate singularities can be avoided and standard methods such as finite differences on 3D Cartesian grids, or finite elements on 3D unstructured meshes can be used.  Additionally, the surface can be naturally represented using standard level-set methods, and thus quite topologically complicated surfaces can be handled.  However, these methods require consistent extensions of the initial data on the surface to the embedding space, which can be non-trivial.  Also, some of these methods  (e.g.~\cite{BertalmioEtAl2001}) lead to degenerate surface differential operators since, for example, they allow diffusion to occur only in directions tangential to the surface.  This makes it difficult to use implicit discretization techniques for time-dependent problems.  Additionally, the need to implement artificial boundary conditions in the embedding space can lead to accuracy degradations~\cite{Greer2006}.  Finally, all embedded methods have an added computational expense since they solve the equations in a dimension at least one greater than that in which the equations are posed.  This added expense can grow quite significantly depending on the surface, the order of the differential operators, and the order of accuracy of the method~\cite{MacDondaldRuuth2008}.

The present kernel method combines many of the benefits of both the intrinsic and narrow band methods.  It uses a semi-discrete approach (or the method-of-lines) in which the surface derivative operators (e.g. surface Laplacian) that appear in the PDEs are approximated using collocation.  These approximations are made using RBF interpolation on surfaces~\cite{FuselierWright2011}.  In this way, the method is similar to the ones used in~\cite{FlyerWright07,FoPi08,FlyerLehto2010,FlyerWright09} for the surface of a sphere.   However, the new method applies to more general surfaces, including those that can be defined parametrically or implicitly (as the level-set of a function).  Since the method is based on RBFs, it allows the computational nodes to be placed at ``scattered'' locations on the surface (see Figure \ref{fig:ex_surfaces} for examples).  Furthermore, it does not rely on any surface-based metrics or intrinsic coordinate systems, thus avoiding any coordinate distortions or singularities.   In this regard, our kernel method is similar to the embedded, narrow-band methods discussed above.   The difference is that our method approximates the surface differential operators directly on the surface instead of having to extend quantities off the surface and do the approximations in $\mathbb{R}^3$.  Thus, like the intrinsic methods, our method is posed in the same dimension as the original problem.  This ability to directly approximate the differential operator also means that no degeneracies arise from having to modify the equations so that the solutions remain restricted to the surface.  Furthermore, computational efficiencies are gained by not having to extend into the full three dimensional space to solve the problems.

Recently, a different kernel method based on RBFs was introduced by Piret for solving PDEs on surfaces, which is called the Orthogonal Gradients (OGr) method \cite{Piret2012}. This method shares many similarities to the embedded, narrow band method known as the Closest Point Method \cite{RuuthMerriman2008,MacDondaldRuuth2008,MacDondaldRuuth2009} and is different from our method in a number of ways. For example, the OGr method requires expanding into the embedding space ($\R^3$) to obtain discrete surface differential operators, which increases the computational complexity. Further, a differentiation matrix is constructed by enforcing that the first and second normal derivatives to the surface of a certain RBF interpolant vanish. This means in particular that one must analytically calculate the derivatives of the normal vectors to the surface, which requires that a smooth representation of the surfaces is known. In contrast, our method only requires nodes at scattered locations on the surface and the corresponding normal vectors to the surface, and does not require expanding into $\R^3$. Furthermore, the discrete operators for the present method are constructed from the inversion of a single traditional positive-definite kernel interpolation matrix, which is guaranteed to be invertible. Additionally, we provide a theory for the convergence of the discrete surface differential operators.

The application of kernel methods (including RBFs) to problems on general surfaces is still in its infancy.  The goal of this paper is thus to investigate some of the theoretical and applied aspects of our kernel method and to establish its applicability.  As a result, we (a) present error estimates for approximating various surface differential operators; (b) numerically investigate the stability and accuracy of the method, showing how solutions can be computed to very high orders of accuracy; and (c) apply the method to two relevant problems in biology and chemistry.

The remainder of the paper is organized as follows.  In next section, we briefly review kernel interpolation with RBFs.  We follow this with a short presentation in Section \ref{sec:operators} on how to formulate surface differential operators in Cartesian coordinates.   Section \ref{sec:kernel_method} describes how these surface differential operators are discretized in the form of differentiation matrices and presents the kernel method in method-of-lines form.   Making heavy use of the results from~\cite{FuselierWright2011}, we provide error estimates for the discrete surface derivatives in Section \ref{sec:convergence} (and proofs in Appendix \ref{proofappendix}).   Section \ref{sec:eigen_stability} numerically investigates the eigenvalue stability of the kernel method.   We numerically demonstrate the convergence the method for the forced scalar diffusion equation to two different surfaces in Section \ref{sec:numerical_results}.  Applications of the method to simulations of pattern formations in a Turing system and to spiral waves in excitable media are presented in Section \ref{sec:applications}.  We conclude with some comments on potential future enhancements of the method in Section \ref{sec:concluding_remarks}.

Before continuing to the main body of the paper, we pause to make some relevant remarks on what follows.  While there are many types of classifications of surfaces or manifolds, in our work, we use the term surface or manifold to always refer to a smooth embedded submanifold of $\R^d$ with no boundary.  We use the notation $\M$ to denote the manifold.  For simplicity, we present our method for manifolds of dimension 2 embedded in $\R^3$.  However, it can be naturally extended to manifolds of codimension 1 embedded in $\R^d$.  Although not pursued here, more technical, but straightforward extensions are possible for manifolds of higher codimension.

\begin{figure}[t]
\centering
\begin{tabular}{ccc}
\includegraphics[width=0.31\textwidth]{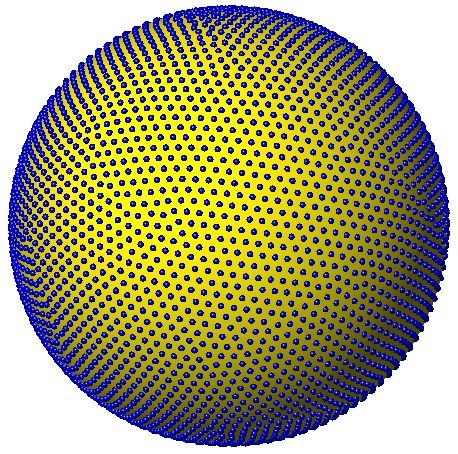} & 
\includegraphics[width=0.32\textwidth]{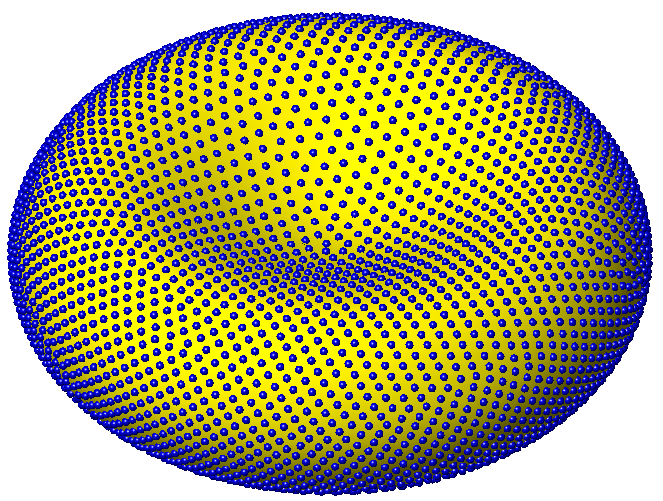} &
\includegraphics[width=0.31\textwidth]{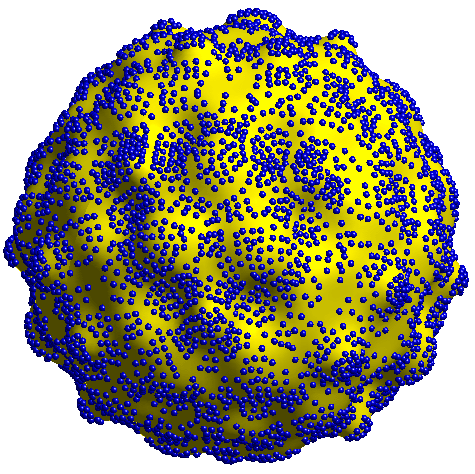}
\\
(a) Unit sphere & (b)  Red blood cell & (c) ``Bumpy sphere'' \\
\includegraphics[width=0.31\textwidth]{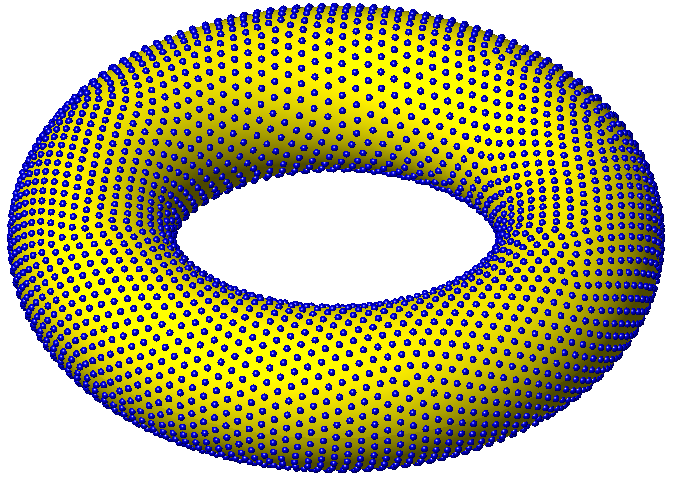} & 
\includegraphics[width=0.33\textwidth]{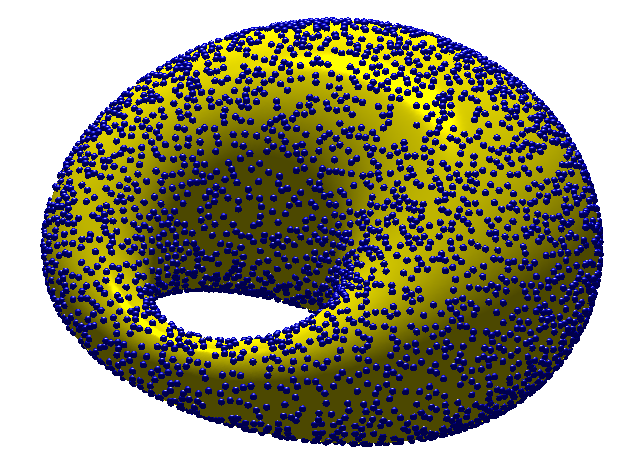} &
\includegraphics[width=0.33\textwidth]{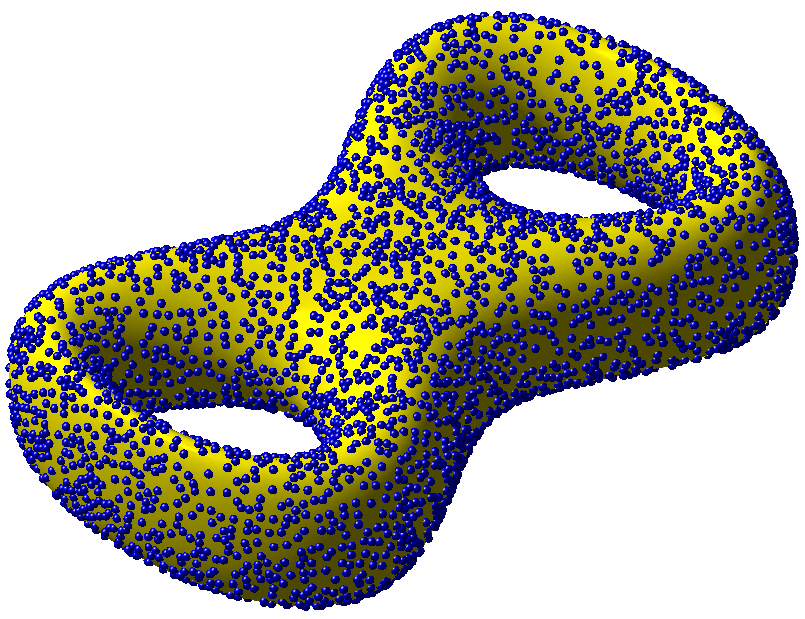}
\\
(d) Torus & (e) Dupin's cyclide & (f) Bretzel2
\end{tabular}
\caption{Example surfaces and ``scattered'' nodes (represented as small solid spheres) used in the numerical experiments.  A description of the surfaces and the node sets is given in Appendix \ref{apndx:Surfaces}.\label{fig:ex_surfaces}}
\end{figure}

\section{Brief overview of kernel interpolation}\label{sec:overview}
Fundamental to the construction of our approximate surface differential operators are kernel interpolants.  Let $\Omega \subseteq \R^d$ and $\phi:\Omega\times\Omega\rightarrow \R$ be a continuous function, which we will refer to as a \emph{kernel}.  Given a set of ``scattered'' nodes $X = \{\vx_j\}_{j=1}^N\subset \Omega$ and a continuous target function $f:\R^d\rightarrow\R$ sampled at $X$, a kernel interpolant takes the general form
\begin{align}
I_{\phi}f(\vx) = \sum_{j=1}^{N} c_j \phi(\vx,\vx_j),\; \vx\in\Omega,
\label{eq:kernel_interp}
\end{align}
where $c_j$ are determined by requiring $I_{\phi}f\bigr|_{X} = f\bigr|_{X}$.  It is well-known that there are many kernels $\phi$ for which a unique solution to this problem exists. In particular, if for any non-zero vector $b\in\R^{N}$, $\phi$ satisfies
\begin{align}
\sum_{i=1}^N\sum_{j=1}^N b_i\phi(\vx_i,\vx_j)b_j > 0,
\label{eq:pos_def}
\end{align}
then there exists a unique solution to \eqref{eq:kernel_interp}.  We call $\phi$ that satisfy \eqref{eq:pos_def} for all finite node sets $X\subset\Omega$ \emph{positive definite kernels} on $\Omega$.  

In this study we focus on the popular subclass of positive definite kernels called \emph{radial basis functions} (RBFs), which have the property that $\phi(\vx,\vy)=\phi(\|\vx - \vy\|)$, where $\|\cdot\|$ is the standard Euclidean norm. (Strictly speaking, there are RBFs that are only conditionally positive definite~\cite[Ch. 7]{Fasshauer:2007}, but our focus will be on the positive definite ones).  For these radial kernels \eqref{eq:kernel_interp} becomes
\begin{align}
I_{\phi}f(\vx) = \sum_{j=1}^{N} c_j \phi(\|\vx-\vx_j\|)
\label{eq:rbf_interp}
\end{align}
and the interpolation constraints can be expressed as the following linear system:
\begin{align}
\begin{bmatrix}
\phantom{c} & & \phantom{c} \\
\phantom{\vdots} & A_X & \phantom{\vdots}  \\
\phantom{c} & & \phantom{c}
\end{bmatrix}
\underbrace{
\begin{bmatrix}
c_1 \\
\vdots \\
c_N \\
\end{bmatrix}}_{\ds \coef_{f}}
=
\underbrace{
\begin{bmatrix}
f(\vx_1) \\
\vdots \\
f(\vx_N)
\end{bmatrix}}_{\ds f_X},
\label{eq:rbf_lin_sys}
\end{align}
where $(A_X)_{i,j} = \phi(\|\vx_i - \vx_j\|)$.  For a positive definite $\phi$, this system is positive definite and hence non-singular.

We will use the following two popular RBFs in this study:
\begin{align}
\text{\underline{Mat\'ern}:}&\quad \phi_{\nu}(r) = C_{\nu}(\ep r)^{\nu-d/2}K_{\nu-d/2}(\ep r),\; \nu > d/2,\; \ep > 0, \label{eq:matern}\\
\text{\underline{IMQ}:}&\quad \phi(r) = \frac{1}{\sqrt{1 + (\ep r)^2}},\; \ep > 0, \label{eq:imq}
\end{align}
which are positive definite on $\R^d$. In \eqref{eq:matern}, $C_\nu = 2^{1-(\nu-d/2)}/\Gamma(\nu-d/2)$ and $K_{\beta}$ is the modified Bessel function of the second kind of order $\beta$.  The Mat\'ern family of kernels are piecewise smooth, with the smoothness controlled by the parameter $\nu$.  For example, with $\nu = \frac{d+3}{2}$, $\phi_{\nu}\in C^{2}(\R^d)$ and with $\nu=\frac{d+5}{2}$, $\phi_{\nu}\in C^{4}(\R^d)$.  In contrast, the inverse multiquadric (IMQ) kernel is $C^{\infty}(\R^d)$.  Infinitely smooth kernels like the IMQ are used often in solving PDEs, and we have included results for the Mat\'ern family to illustrate the different convergence rates that are possible with finitely smooth kernels.


Typically, kernel, and in particular RBF, interpolation is applied to problems where the data are scattered in $\R^d$ or on the surface of the sphere.  The present authors have recently proved that favorable error estimates can be achieved when RBF interpolation is applied to reconstruction problems on more general manifolds $\M$ of $\R^d$\cite{FuselierWright2011}.  It is worth noting, that the interpolants for these reconstruction problems are not modified from their original form \eqref{eq:rbf_interp}, i.e. distances are still measured as straight line distances, and not as distances intrinsic to the manifold.  Thus, the interpolation method needs only limited knowledge of the underlying manifold; it only requires computing the solution to the linear system \eqref{eq:rbf_lin_sys}.  In what follows, we will use \eqref{eq:rbf_interp} to approximate the surface gradient and surface divergence over a given set of nodes $X\subset\M$ on the manifold, and combine these to obtain an approximation for the surface Laplacian.  For this construction, we will only need a point set $X\subset\M$ and the normals to $\M$ at each point in $X$.

We conclude with some remarks on the parameter $\ep$ in \eqref{eq:matern} and \eqref{eq:imq}, which is called the ``shape parameter'' since it can be used for changing the kernels from peaked (large $\ep$) to flat (small $\ep$).  In general, better accuracy is obtained for smaller values of $\ep$~\cite[Ch. 16--17]{Fasshauer:2007,FornbergWrightLarsson2004}.  However, the standard way of computing the interpolant by solving \eqref{eq:rbf_lin_sys} becomes increasingly ill-conditioned.  While several types of stable algorithms have been developed for bypassing this ill-conditioning~\cite{FornbergWright2004,FasshauerMcCourt2012,FornbergPiret2007,FornbergLarssonFlyer2011}, these algorithms generally break down when the nodes $X$ fall on a lower dimensional manifold of $\R^d$, unless the surface topology is taken into account in the algorithm (as done on the sphere in~\cite{FornbergPiret2007}).   Modifications to these algorithms are underway to address this issue~\cite{LarssonEtAl2012}, but they are not yet generally available.  As a result, in this study we will not give much attention to issues of the shape parameter, but will instead choose it in the applications to ensure the linear system \eqref{eq:rbf_lin_sys} is reasonably well-conditioned.

\section{Continuous surface differential operators}\label{sec:operators}
Here we assume that $\M$ is a smooth embedded submanifold of $\R^d$ with no boundary and $\text{dim}(\M)=d-1$. In the descriptions of the surface differential operators on $\M$ that follows, we will focus on the case of $d=3$ for notational simplicity.  The extension to other $d$ should be obvious.  Additionally, all expressions are given in extrinsic (or Cartesian) coordinates.

For any point $\vx=(x,y,z)$ on $\M$, we denote the normal vector to $\M$ at $\vx$ as $\vn=(n^x,n^y,n^z)$ and the vector space of tangent vectors to $\M$ at $\vx$ as $T_{\vx} \M$.  The surface gradient on $\M$ at $\vx$ is then given as
\begin{align}
\Mgrad := \proj\nabla = \lp \vI - \vn\vn^{T} \rp \nabla
\label{eq:Mgrad1}
\end{align}
Here the $3$-by-$3$ matrix $\proj$ projects vectors in $\R^3$ to $T_\vx\M$.  Letting $\bfe^x$, $\bfe^y$, $\bfe^z$, be the standard unit vectors in $x$, $y$, and $z$ directions in $\R^3$, we can re-write \eqref{eq:Mgrad1} in component form as
\begin{align}
\Mgrad := 
\begin{bmatrix}
\lp \bfe^x \cdot \vP \rp \grad  \\
\lp \bfe^y \cdot \vP \rp \grad  \\
\lp \bfe^z \cdot \vP \rp \grad 
\end{bmatrix}
=
\begin{bmatrix}
\lp \bfe^x  - n^x \vn \rp \cdot \grad \\
\lp \bfe^y  - n^y \vn \rp \cdot \grad \\
\lp \bfe^z  - n^z \vn \rp \cdot \grad
\end{bmatrix}
= 
\begin{bmatrix}
\vp^x \cdot \grad \\
\vp^y \cdot \grad \\
\vp^z \cdot \grad \\
\end{bmatrix}
= 
\begin{bmatrix}
\pgradx \\
\pgrady \\
\pgradz \\
\end{bmatrix}.
\label{eq:Mgrad}
\end{align}
With this notation, the surface divergence of a smooth vector field $\vf = (f^x,f^y,f^z):\M\rightarrow \R^3$ at a point $\vx\in\M$ can be expressed as
\begin{align}
\Mdiv \vf := \lp \MEgrad \rp \cdot \vf =  \pgradx f^x + \pgrady f^y + \pgradz f^z.
\label{eq:Mdiv}
\end{align}
Finally, the Laplace-Beltrami operator on $\M$ at $\vx$ can be written using \eqref{eq:Mgrad} and \eqref{eq:Mdiv} as:
\begin{equation}
\Mlap := \Mdiv\Mgrad = \lp \MEgrad \rp \cdot \lp \MEgrad \rp = 
\pgradx\pgradx + \pgrady\pgrady + \pgradz\pgradz.
\label{eq:Mlap}
\end{equation}
%
%
In what follows, we will approximate the Laplace-Beltrami operator in the form of \eqref{eq:Mlap} by replacing the continuous operator \eqref{eq:Mgrad} with a discrete version based on the kernel interpolant \eqref{eq:rbf_interp}.

\section{Kernel collocation technique in method-of-lines form}\label{sec:kernel_method}
The discrete approximations to the surface differential operators that will be used in the method-of-lines (MOL) are based on applying the projected gradient \eqref{eq:Mgrad} to the kernel interpolant \eqref{eq:rbf_interp}.  We illustrate how to construct these approximations by first applying the projected gradient to a radial kernel.  We then show how to express the discrete operators as \emph{differentiation matrices}.  This is followed by a description of the MOL formulation with these discrete operators for the diffusion equation.  As in the previous section, we formulate everything in $\R^3$, with an obvious extension to other dimensions.


\subsection{Projected gradient of a radial kernel}
Let $\vx$ and $\vx_{j}$ be points on $\M$, and let $r_j(\vx) = \|\vx - \vx_j\|$ denote the Euclidean distance from $\vx$ to $\vx_{j}$, i.e. $r_j(\vx)=\sqrt{(x-x_{j})^{2}+(y-y_{j})^{2}+(z-z_{j})^{2}}$.
With this notation, a radial kernel $\phi:\R^3\times\R^3\rightarrow \R$ centered at $\vx_{j}$ is given by $\phi(r_j(\vx))$.  Under the assumption that $\phi(r_j(\vx))$ is differentiable at $\vx$, a simple application of the chain rule gives the gradient of $\phi(r_j(\vx))$ as
\begin{equation}
\nabla \phi(r_j(\vx)) = 
\begin{bmatrix}
\pder{x}\phi(r_j(\vx)) \\
\pder{y}\phi(r_j(\vx)) \\ 
\pder{z}\phi(r_j(\vx))
\end{bmatrix}
=
\begin{bmatrix}
(x-x_j) \\
(y-y_j) \\
(z-z_j)
\end{bmatrix}
\dfrac{\phi'(r_j(\vx))}{r_j(\vx)}
= (\vx - \vx_j)\dfrac{\phi'(r_j(\vx))}{r_j(\vx)},
\label{eq:Cgradphi}
\end{equation}
where we have used $'$ to denote differentiation with respect to $r$.  Note that the apparent singularity in the above formula when $r_j(\vx_j)=0$ will analytically cancel since $\phi$ is assumed to be an even function with at least one continuous derivative.  The surface gradient of $\phi(r_j(\vx))$ can then be obtained from \eqref{eq:Mgrad}.  After applying \eqref{eq:Mgrad}, the components of the surface gradient of $\phi(r_j(\vx))$ are
\begin{align}
\pgradx \phi(r_j(\vx)) &= \lp(1-n^x n^x)(x-x_j) - n^x n^y (y-y_j) - n^x n^z (z-z_j)\rp\dfrac{\phi'(r_j(\vx))}{r_j(\vx)}, \label{eq:pgradx_phi}\\
\pgrady \phi(r_j(\vx)) &= \lp(1-n^y n^y)(y-y_j) - n^x n^y (x-x_j) - n^y n^z (z-z_j)\rp\dfrac{\phi'(r_j(\vx))}{r_j(\vx)}, \label{eq:pgrady_phi}\\
\pgradz \phi(r_j(\vx)) &= \lp(1-n^z n^z)(z-z_j) - n^x n^z (x-x_j) - n^y n^z (y-y_j)\rp\dfrac{\phi'(r_j(\vx))}{r_j(\vx)}. \label{eq:pgradz_phi}
\end{align}
We now have all the components necessary to build the RBF approximation of the surface gradient $\vP\nabla$ of a scalar quantity defined on $\M$.

\subsection{Discrete operators in terms of differentiation matrices}
Let $f:\M\rightarrow \R$ be some differentiable target function known at a set of ``scattered'' node locations $X=\{\vx_j\}_{j=1}^N\subset \M$.  The first step in constructing a discrete approximation to the surface gradient of $f$ on $X$ is to construct an interpolant to $f$ of the form \eqref{eq:rbf_interp}.  In the notation of this section, the interpolant takes the form
\begin{align}
I_{\phi}f(\vx) = \sum_{j=1}^{N} \coef_j\phi(r_j(\vx)),
\end{align}
where the coefficients $\coef_j$ are determined by collocation.  We next apply the projected gradient operator \eqref{eq:Mgrad} to $I_{\phi}f$ and evaluate at the nodes in $X$.  Focusing on the $\pgradx$ component of this operator and using \eqref{eq:pgradx_phi}, we obtain
\begin{align*}
\lp\pgradx I_{\phi}f(\vx)\rp\bigr|_{\vx=\vx_i} &= \sum_{j=1}^{N} \coef_j \lp\pgradx \phi(r_j(\vx))\rp\bigr|_{\vx=\vx_i}, \\
&= \sum_{j=1}^{N} \coef_j \underbrace{\lp(1-n^{x}_i n^{x}_i)(x_i-x_j) - n^{x}_{i} n^{y}_{i} (y_i-y_j) - n^{x}_{i} n^{z}_{i} (z_i-z_j)\rp\dfrac{\phi'(r_j(\vx_i))}{r_j(\vx_i)}}_{\ds (B_X^x)_{i,j}},
\end{align*}
where $i=1,\ldots,N$ and $(n^x_i,n^y_i,n^z_i)$ is the normal vector at $\vx_i$. We can write this approximation in terms of a matrix vector product using the notation from the linear system \eqref{eq:rbf_lin_sys} as follows:
\begin{align}
\lp\pgradx I_{\phi}f\rp\bigr|_{X} &= B^{x}_X\coef_{f} = (B^x_X A_X^{-1})f_X=G^{x}_{X}f_X.  \label{eq:dm_gx}
\end{align} 
The matrix $G^{x}_{X}$ is an $N$-by-$N$ differentiation matrix that represents the discrete RBF approximation to the $x$-component of the projected gradient operator over the set of nodes in $X$.  We can similarly obtain the discrete approximations to the $y$- and $z$-components of the projected gradient operator as follows:
\begin{align}
\lp\pgrady I_{\phi}f\rp\bigr|_{X} &= (B^y_X A_X^{-1})f_X = G^{y}_{X}f_X, \label{eq:dm_gy}\\
\lp\pgradz I_{\phi}f\rp\bigr|_{X} &= (B^z_X A_X^{-1})f_X = G^{z}_{X}f_X, \label{eq:dm_gz}
\end{align}
where the entries in $B^y_X$ and $B^z_X$ are given by
\begin{align*}
(B^y_X)_{i,j} &= \lp(1-n_i^y n_i^y)(y_i-y_j) - n_i^x n_i^y (x_i-x_j) - n_i^y n_i^z (z_i-z_j)\rp\dfrac{\phi'(r_j(\vx_i))}{r_j(\vx_i)}, \\
(B^z_X)_{i,j} &= \lp(1-n_i^z n_i^z)(z_i-z_j) - n_i^x n_i^z (x_i-x_j) - n_i^y n_i^z (y_i-y_j)\rp\dfrac{\phi'(r_j(\vx_i))}{r_j(\vx_i)},
\end{align*}
for $i,j=1,\ldots,N$.  The full discrete approximation to the surface gradient operator $\Mgrad$ is given in terms of \eqref{eq:dm_gx}--\eqref{eq:dm_gz} as the $3N$-by-$N$ matrix
\begin{align}
G_X = 
\begin{bmatrix}
G_X^x \\ G_X^y \\ G_X^z
\end{bmatrix}.
\label{eq:Mgrad_dm}
\end{align}

A discrete approximation to the surface divergence operator \eqref{eq:Mdiv} can also be constructed from \eqref{eq:dm_gx}--\eqref{eq:dm_gz}.  Let $\vf = (f^x,f^y,f^z):\M\rightarrow \R^3$ be a smooth vector field and $\vf_X = \begin{bmatrix} f^x_X & f^y_X & f^z_X\end{bmatrix}^T$ denote the $3N$-by-$1$ vector of samples 
of $\vf$ on $X$.  Then the discrete approximation of the surface divergence of $\vf$ on $X$ is given by
\begin{align}
D_X \vf_X = 
\begin{bmatrix}
G_X^x & G_X^y & G_X^z
\end{bmatrix}\vf_X =
G_X^x f^x_X + G_X^y f^y_X + G_X^z f^z_X.
\label{eq:Mdiv_dm}
\end{align} 

Finally we can combine \eqref{eq:Mgrad_dm} and \eqref{eq:Mdiv_dm} to obtain a discrete approximation to the Laplace-Beltrami operator \eqref{eq:Mlap}:
\begin{align}
L_X = D_X G_X = G_X^x G_X^x + G_X^y G_X^y + G_X^z G_X^z.
\label{eq:Mlap_dm}
\end{align}
The $N$-by-$N$ differentiation matrix $L_X$ represents our discrete RBF approximation to the surface Laplacian and is what we will use in the applications that follow.  It is important to note that the construction of $L_X$ only requires a node set $X\subset \M$ and the normal vectors to $\M$.  Additionally, $L_X$ is constructed using extrinsic coordinates and is thus free from any coordinate singularities (e.g. the pole singularity for the unit sphere $\Sphere^2$).  In the case of $\M=\Sphere^2$, other studies have taken a different approach at approximating the surface Laplacian using kernel collocation (e.g.~\cite{QTLG2005,WrightFlyerYuen}).  These studies apply the surface Laplacian directly instead of approximating it with projected gradient as we have done.  For more general manifolds, it is non-trivial to work out the surface Laplacian in closed form, which is why we are advocating the use of the discrete approximation \eqref{eq:Mlap_dm}.  

In Section \ref{sec:convergence}, we show that our discrete approximations \eqref{eq:Mgrad_dm}--\eqref{eq:Mlap_dm} converge to the continuous surface derivatives \eqref{eq:Mgrad}--\eqref{eq:Mlap}, respectively, as the nodes in $X$ increases and ``fill'' the manifold $\M$.

Although the computational cost of constructing the discrete surface differential operators is $\bO(N^3)$, these constructions are a preprocessing step and can be done once and stored.  Applying these surface differential operators to a vector requires $\bO(N^2)$ operations.  In Section \ref{sec:concluding_remarks}, we make some remarks on how these cost may be reduced.

\subsection{MOL formulation}
To describe the MOL formulation of our kernel method, we use the equation for diffusion of a scalar quantity $u$ on a surface with a (non-linear) forcing term:
\begin{align}
\frac{\partial u}{\partial t} = \delta \laps u + f(t,u).
\label{eq:diffusion_scalar}
\end{align}
Here $\delta>0$ is the diffusion coefficient, $f(t,u)$ is the forcing term, and an initial value of $u$ at time $t=0$ is given.  Letting $X=\{\vx_j\}_{j=1}^N\subset\M$ and $\uu\in\R^N$ denote the vector containing the samples of $u$ at the points in $X$, our method for \eqref{eq:diffusion_scalar} takes the form
\begin{align}
\frac{d}{dt} \uu = \delta L_X \uu + f\lp t,\uu\rp,
\label{eq:kernel_mol}
\end{align}
where $L_X$ is the differentiation matrix \eqref{eq:Mlap_dm} corresponding to our discrete approximation of $\laps$.  This is a system of $N$ coupled ODEs and, provided it is stable, can be advanced in time with a suitably chosen time-integration method (see the Sections \ref{sec:numerical_results} and \ref{sec:applications} for examples).   In Section \ref{sec:eigen_stability}, we numerically investigate the eigenvalue stability of the method. 

\section{Convergence results for the discrete surface derivatives}\label{sec:convergence}

In this section we present the convergence results for our discrete differential operators. Given that these operators are essentially obtained by differentiating interpolants, our error estimates will rely on the approximation power of kernel interpolants. Recently, Sobolev error estimates for a wide variety of kernel interpolants (constructed from RBF kernels restricted to surfaces) were made available for smooth, embedded submanifolds of $\R^d$ \cite{FuselierWright2011}, and these results can be used to derive the bounds in this section. Precisely stated theorems and their proofs are given in Appendix \ref{proofappendix} for the interested reader. A summary of the results will be presented after we establish the necessary notation.

\subsection{Definitions and notation}\label{sec:definitions}

As is typical in scattered data approximation, the error estimates are given in terms of the \emph{mesh norm} $h$ and \emph{separation radius} $q$ of the point set $X$ on $\M$, which are given by
\begin{align*}
h:=\sup_{\vx\in\M}\min_{\vx_j\in X}d_{\M}(\vx,\vx_j) \hspace{.5in} q:=\frac{1}{2}\min_{\underset{\vx_j\neq \vx_i}{\vx_j\in X}}d_{\M}(\vx_i,\vx_j),
\end{align*}
where $d_{\M}(\vx,\vx_j)$ is the geodesic distance between $\vx$ and $\vx_j$ on $\M$, i.e. it is the arclength of the shortest curve on $\M$ connecting $\vx$ and $\vx_j$. The uniformity of the data is also important - this is measured by the \emph{mesh ratio} $\rho:=h/q$.

Every positive definite kernel is the reproducing kernel for a space of continuous functions, often called the \emph{native space} (see  \cite[Chapter 13]{Fasshauer:2007} or \cite[Chapter 10]{Wendland:2004} for details). We will denote the native space of a scalar kernel $\phi$ with domain $\Omega\subseteq\R^d$ by $\mathcal{N}_{\phi}(\Omega)$. When determining the approximation power of a given kernel, there is a rich theory to draw from when the target function lies within $\mathcal{N}_\phi(\Omega)$. 

The native spaces associated with all of the kernels we consider here are subsets of, and sometimes equal to, Sobolev spaces. We denote by $H^{t}(\R^d)$ the space of functions whose distributional derivatives up to order $t$ are all in $L_{2}(\R^d)$, with the usual norm. We will ultimately consider Sobolev spaces on a manifold $\M$, where $\M$ is a $2$-dimensional smooth embedded submanifold of $\R^3$. We denote by $H^t(\M)$ the space of functions on $\M$ whose projections onto $\R^{2}$ by coordinate charts are all in the Sobolev space $H^t(\R^{2})$ (see \cite[Section 2]{FuselierWright2011}). Since our manifold is embedded in $\R^3$, a natural choice for vector-valued functions on $\M$ is $\mathbf{H}^{t}(\M):= (H^t(\M))^3$. The norm for a function $\mathbf{f} = (f^x,f^y,f^z)\in \mathbf{H}^{t}(\M)$ is given by 
\[\|\mathbf{f}\|_{\mathbf{H}^{t}(\M)} := \left(\|f^x\|_{H^{t}(\M)}^2 + \|f^y\|_{H^{t}(\M)}^2 + \|f^z\|_{H^{t}(\M)}^2\right)^{1/2}.\]

The kernels that we use throughout the paper give rise to spaces of continuous Sobolev functions. Thus from this point on we assume that for some $\tau > 3/2$ the kernel $\phi$ satisfies
\begin{align}
\widehat{\phi}(\omega)\leq C(1 + \|\omega\|^2)^{-\tau},\label{eq:fastdecay}
\end{align}
where $\widehat{\phi}$ is the Fourier transform of $\phi$ on $\R^3$ and $C$ is a constant.\footnote{The condition $\tau > 3/2$ ensures that functions within the kernel's native space are continuous on $\R^3$.} Radial kernels whose Fourier transforms decay exponentially, such as the Gaussian or IMQ, obviously satisfy this. For finitely smooth kernels, we will require the stronger condition that
\begin{align}
c (1 + \|\omega\|^2)^{-\tau}\leq \widehat{\phi}(\omega)\leq C(1 + \|\omega\|^2)^{-\tau},\label{eq:algdecay}
\end{align}
where $c$ and $C$ are positive constants. Note that in particular the Mat\'ern kernels \eqref{eq:matern} satisfy \eqref{eq:algdecay} since~\cite[p.133]{Wendland:2004}
\begin{align*}
\widehat{\phi}_{\nu}(\omega) = (1 + \|\omega\|^2)^{-\nu}.
\end{align*}
It can be shown that kernels satisfying \eqref{eq:fastdecay} obey $\mathcal{N}_\phi(\M)\subseteq H^{s}(\M)$, and that those satisfying \eqref{eq:algdecay} give $\mathcal{N}_\phi(\M)=H^{s}(\M)$ \cite[Theorem 3.3]{FuselierWright2011}, where $s=\tau - 1/2$. The parameter $s$ governs the smoothness of the native spaces intrinsic to the surface $\M$, so the error estimates that follow will often be given in terms of $s$.  

\subsection{Summary of convergence results}

We need to define continuous analogues to our discrete differential operators. Recall that the operators $G_X$ and $D_X$ are designed by applying surface differential operators to an RBF interpolant. This prompts the following definitions:
\[G_\M f := \nabla_\M I_{\phi}f,\mbox{\hspace{.5in}} D_{\M}\mathbf{f} := \nabla_\M \cdot I_{\Phi}\mathbf{f},\mbox{\hspace{.5in}} L_{\M}f := D_{\M} G_\M f,\]
where $I_\Phi \mathbf{f} = (I_\phi f^x, I_\phi f^y,I_\phi f^z)$ is the vector-valued kernel interpolant of $\mathbf{f}$. For any continuous functions $f$ and $\mathbf{f}$, the functions above sampled on $X$ agree with the associated discrete differential operator acting on the vector $f_X$ or $\mathbf{f}_X$, e.g. $(L_\M f)|_X = L_X f_X$. To determine how well $L_X$ approximates $\Delta_{\M}$, for example, we will derive estimates for the error $\|L_{\M}f - \Delta_{\M}f\|$ over the entire domain.

The error will be measured in the $L_2(\M)$ and $L_\infty(\M)$ norms, and we do this for a few reasons. Even though error is often measured empirically with an $\infty$-type norm, the current theoretical $\infty$-norm error estimates for many kernel approximation problems typically give slower rates than those observed experimentally. The observed $\infty$-norm rates tend to be closer to the $L_2$-type convergence rates, which are often optimal. Thus one can probably expect the faster $L_2(\M)$ convergence rates in practice. With that, here is our first approximation result.\\

\noindent \textbf{Result 1: First-order discrete operators.} Suppose the kernel $\phi$ satisfies \eqref{eq:fastdecay} with $\tau > 3/2 + 1$ and define $s = \tau - 1/2$. Then the following holds for all $f\in N_{\phi}(\M)$ and $\vf \in (N_\phi(\M))^3$.
\begin{eqnarray*}
\|G_\M f-\nabla_{\M} f\|_{L_2(\M)}& \leq &\mathcal{O}(h^{s-1}), \quad
\|G_\M f-\nabla_{\M} f\|_{L_\infty(\M)} \leq \mathcal{O}(h^{s-2}), \\
\|D_{\M}\mathbf{f}-\nabla_{\M}\cdot \mathbf{f}\|_{L_2(\M)}& \leq &\mathcal{O}(h^{s-1}), \quad \|D_{\M}\mathbf{f}-\nabla_{\M}\cdot \mathbf{f}\|_{L_\infty(\M)} \leq \mathcal{O}(h^{s-2}).
\end{eqnarray*}
For a proof, see Proposition \ref{prop:discretegraddivallkernels}, Appendix \ref{proofappendix}. Note that for kernels such as the Gaussian and IMQ, which satisfy \eqref{eq:fastdecay} for all $\tau > 0$, this implies convergence faster than any fixed algebraic order for target functions within the native space.

A key assumption in the above result is that the target function is within $\mathcal{N}_\phi(\M)$. When the kernel is less smooth, we have the ability to provide error rates for many target functions \emph{not} within $\mathcal{N}_\phi(\M)$ - the only requirement is that the targets be in a Sobolev space smooth enough so that first-order derivatives are continuous (since $\text{dim}(\M) = 2$, this means that $f\in H^\beta(\M)$ for some $\beta > 2$). In this case the uniformity of the data sites, measured by the mesh ratio $\rho$, enters the error estimates. Proposition \ref{prop:discretegraddiv} in Appendix \ref{proofappendix} implies the following.\\

\noindent \textbf{Result 2: First-order discrete operators with finitely smooth kernels.} Suppose $\phi$ satisfies \eqref{eq:algdecay} with $\tau > 3/2 + 1$, define $s = \tau - 1/2$, and let $\beta$ satisfy $s\geq \beta > 2$. Then the following holds for all $f\in H^{\beta}(\M)$ and $\vf \in \vH^\beta(\M)$: 
\begin{eqnarray*}
\|G_\M f-\nabla_{\M} f\|_{L_2(\M)}& \leq &\mathcal{O}(h^{\beta-1}\rho^{s - \beta}), \quad
\|G_\M f-\nabla_{\M} f\|_{L_\infty(\M)} \leq \mathcal{O}(h^{\beta-2}\rho^{s - \beta}), \\
\|D_{\M}\mathbf{f}-\nabla_{\M}\cdot \mathbf{f}\|_{L_2(\M)}& \leq &\mathcal{O}(h^{\beta-1}\rho^{s - \beta}),  \quad \|D_{\M}\mathbf{f}-\nabla_{\M}\cdot \mathbf{f}\|_{L_{\infty}(\M)} \leq \mathcal{O}(h^{\beta-2}\rho^{s - \beta}).
\end{eqnarray*}
In addition, for very smooth target functions\footnote{See the discussion preceeding Proposition \ref{prop:discretegraddiv} in Appendix \ref{proofappendix} for details.} these kernels give the following error rates:
\begin{eqnarray*}
\|G_\M f-\nabla_{\M} f\|_{L_2(\M)}& \leq &\mathcal{O}(h^{2s-1}), \quad
\|G_\M f-\nabla_{\M} f\|_{L_\infty(\M)} \leq \mathcal{O}(h^{2s-2}), \\
\|D_{\M}\mathbf{f}-\nabla_{\M}\cdot \mathbf{f}\|_{L_2(\M)}& \leq &\mathcal{O}(h^{2s-1}),  \quad \|D_{\M}\mathbf{f}-\nabla_{\M}\cdot \mathbf{f}\|_{L_\infty(\M)} \leq \mathcal{O}(h^{2s-2}).
\end{eqnarray*}

Finding bounds on the discrete Laplacian is less straightforward - the repeated process of ``interpolate, then differentiate'' complicates matters. This will ultimately result in more factors of $\rho$ in the estimates, even in the case of very smooth target functions. Unfortunately error bounds for the discrete Laplacian constructed from infinitely smooth kernels are currently unavailable. However, we will be able to provide high-order convergence rates in the discrete Laplacian for any kernel satisfying \eqref{eq:algdecay}. The following comes from Theorems \ref{theorem:discretelaplaceest_rough} and \ref{theorem:discretelaplaceest_smooth}, Appendix \ref{proofappendix}. \\

\noindent \textbf{Result 3: Discrete Laplacian with finitely smooth kernels.} Suppose $\phi$ satisfies \eqref{eq:algdecay} with $\tau > 3/2 + 2$, define $s = \tau - 1/2$, and let $\beta$ satisfy $s\geq \beta > 3$. Then the following holds for all $f\in H^{\beta}(\M)$ and $\vf \in \vH^\beta(\M)$:
\begin{eqnarray*}
\|L_{\M}f-\laps f\|_{L_2(\M)}& \leq &\mathcal{O}(h^{\beta-2}\rho^{2(s - \beta)+1}),
 \; \|L_{\M}f-\laps f\|_{L_\infty(\M)} \leq \mathcal{O}(h^{\beta-3}\rho^{2(s - \beta)+1}).
\end{eqnarray*}
In addition, for very smooth target functions\footnote{See Theorem \ref{theorem:discretelaplaceest_smooth} for details.} these kernels give the following error rates:
\begin{eqnarray*}
\|L_{\M}f-\laps f\|_{L_2(\M)}& \leq &\mathcal{O}(h^{2s-2}\rho),\quad \|L_{\M}f-\laps f\|_{L_\infty(\M)} \leq \mathcal{O}(h^{2s-3}\rho).
\end{eqnarray*}

For solving time-dependent PDEs, one often is only concerned with approximating derivatives at the nodes since typically these are the only locations the numerical solution is known. If our point set $X$ is always chosen in such a way that $\rho$ is bounded by a fixed constant and $f$ is any target function in a Sobolev space smooth enough to guarantee continuous second-order derivatives (that is, $f\in H^\beta(\M)$ with $\beta > 3$), Result 3 guarantees that 
\[ \|L_X f_X -(\Delta_\M f)|_{X}\|_{l_\infty} \leq \|L_{\M}f - \Delta_\M f \|_{L_\infty(\M)} \leq \mathcal{O}(h^{\beta - 3})\rightarrow 0.\]
Now that we have established spatial accuracy, we need to discuss the stability of numerical PDE solvers based on these discrete differential operators.

\section{Eigenvalue stability} \label{sec:eigen_stability}
A necessary condition for stability of the MOL approach described in Section \ref{sec:kernel_method} is that the eigenvalues of the differentiation matrices $L_X$ in \eqref{eq:kernel_mol} must be in the stability domain of the ODE solver used for advancing the system in time.  We propose using either fully-implicit backward differentiation formulae (BDF) methods in the case where there is no forcing terms, and semi-implicit BDF methods (SBDF) in the case of forcing terms (see Sections \ref{sec:numerical_results} and \ref{sec:applications} for details).  This means that, at the very least, all eigenvalues must be in the left half plane.   Unfortunately, the construction of the discrete approximation of $\laps$ does not guarantee that this property will hold for $L_X$.  Fortunately however, we have found from numerous numerical experiments that provided the mesh norm $h$ is small enough (i.e. the surface is well-discretized), then it appears that all of the eigenvalues do in fact lie in the left half plane.  To illustrate these observations, in Figures \ref{fig:egvls}(a)--(f) the eigenvalues of $L_X$ are plotted for the surfaces and node sets depicted in Figure \ref{fig:ex_surfaces}.  Results are shown for both the IMQ kernel and the $\nu=7$ Mat\'ern kernel, which is $C^{10}(\R^3)$.  While not presented here, similar results were obtained for the Mat\'ern kernels with $\nu < 7$.   The shape parameters, which are listed in the figure, have been chosen for this experiment so that the condition numbers of \eqref{eq:rbf_lin_sys} are around $10^{10}$.  We find that even when the nodes are quite scattered (as in the case of the Dupin's cyclide and Bretzel2) the eigenvalues all lie in the left half plane in these examples.

\begin{figure}[t]
\centering
\begin{tabular}{ccc}
\includegraphics[width=0.32\textwidth]{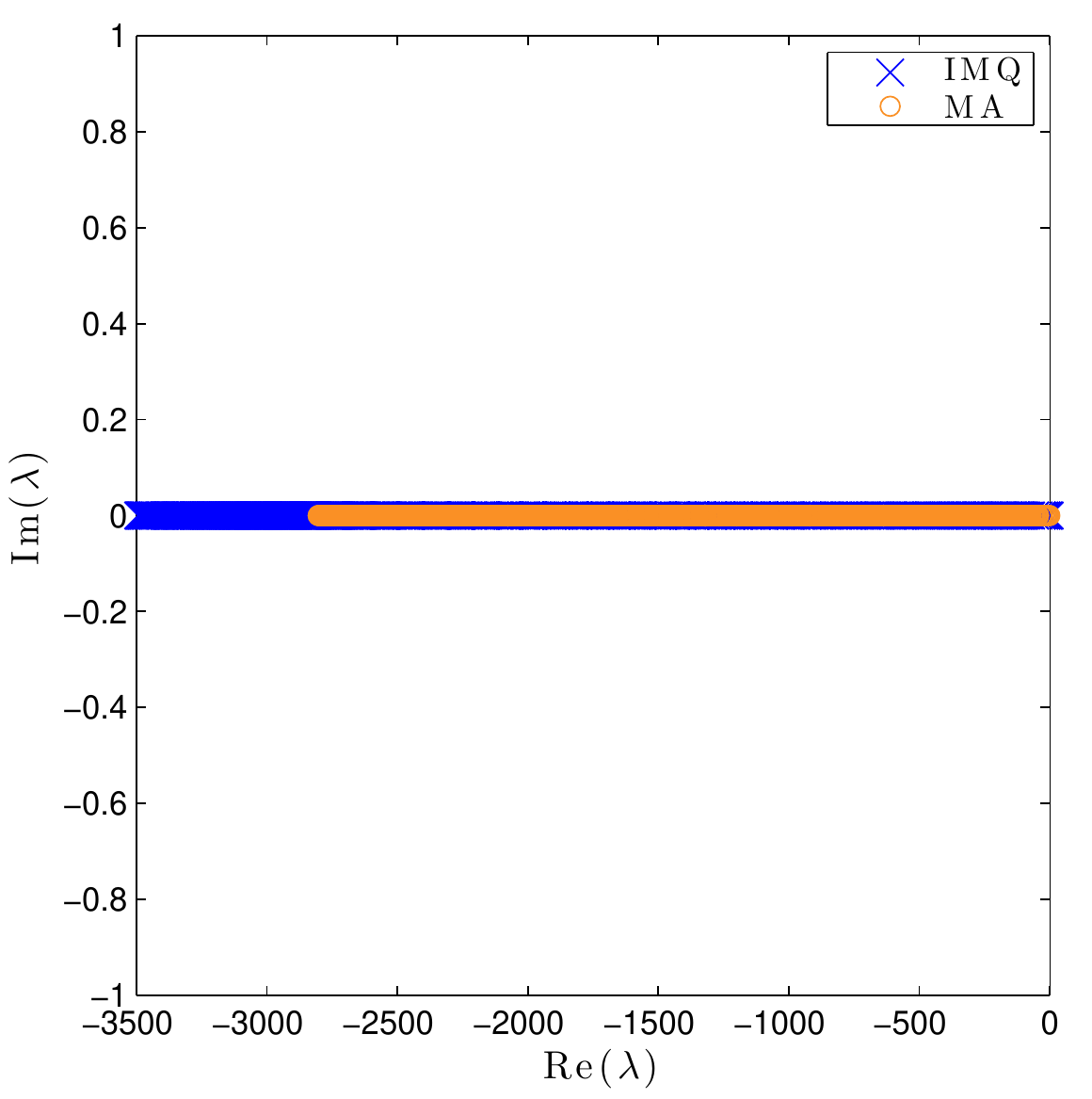} & 
\includegraphics[width=0.32\textwidth]{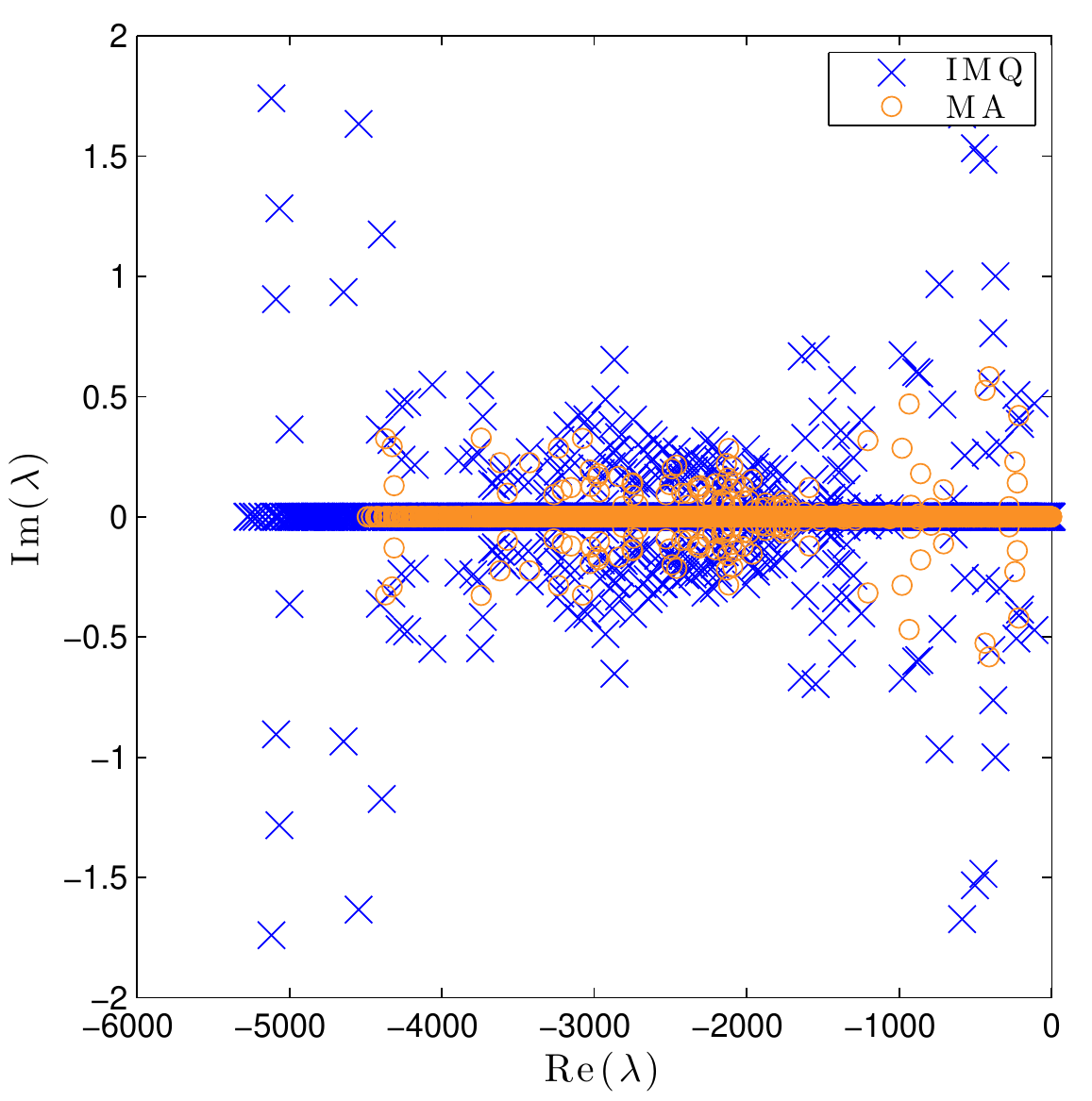} &
\includegraphics[width=0.32\textwidth]{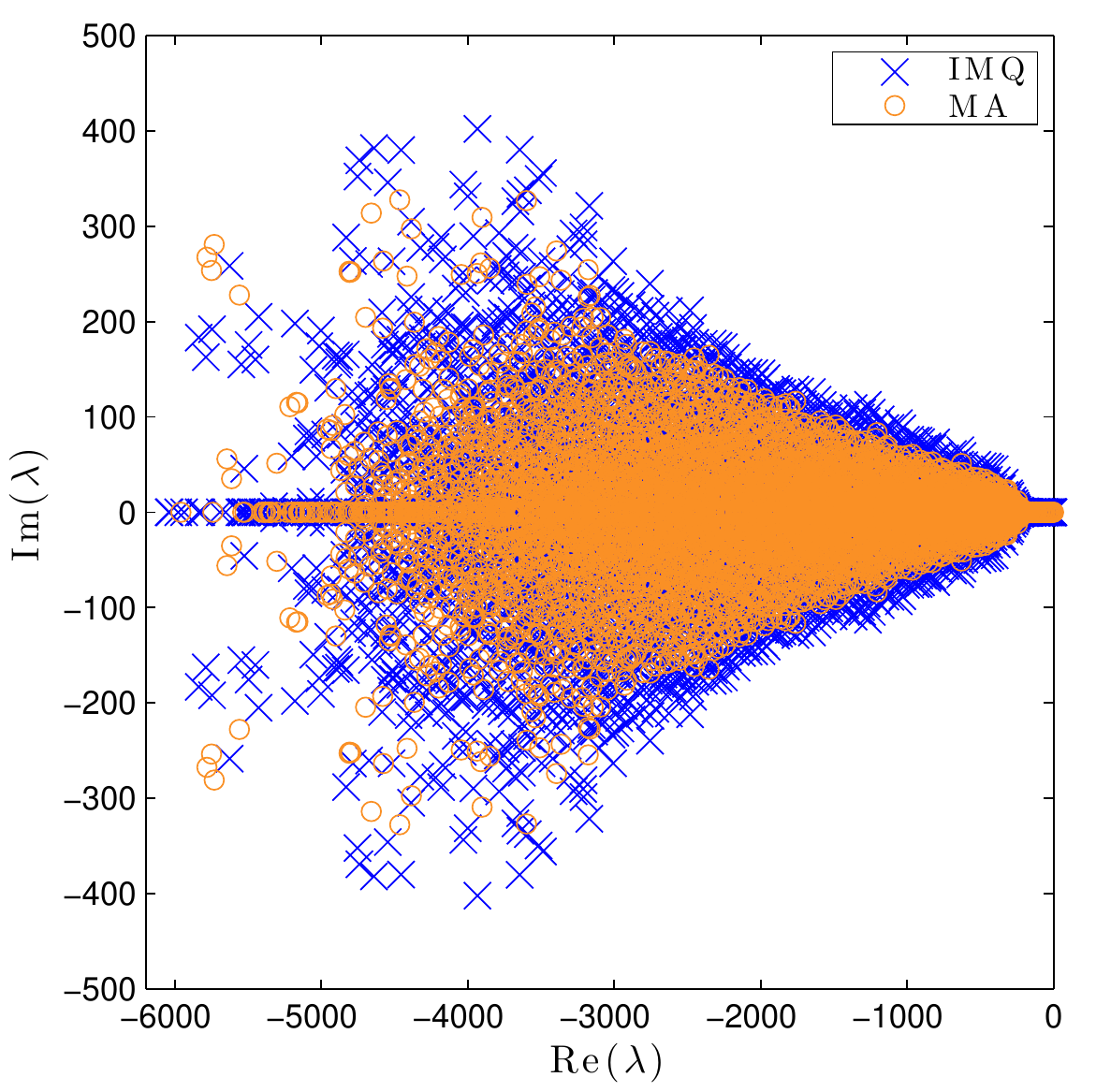}
\\
(a) Unit sphere & (b)  Red blood cell & (c) ``Bumpy sphere'' \\
\includegraphics[width=0.32\textwidth]{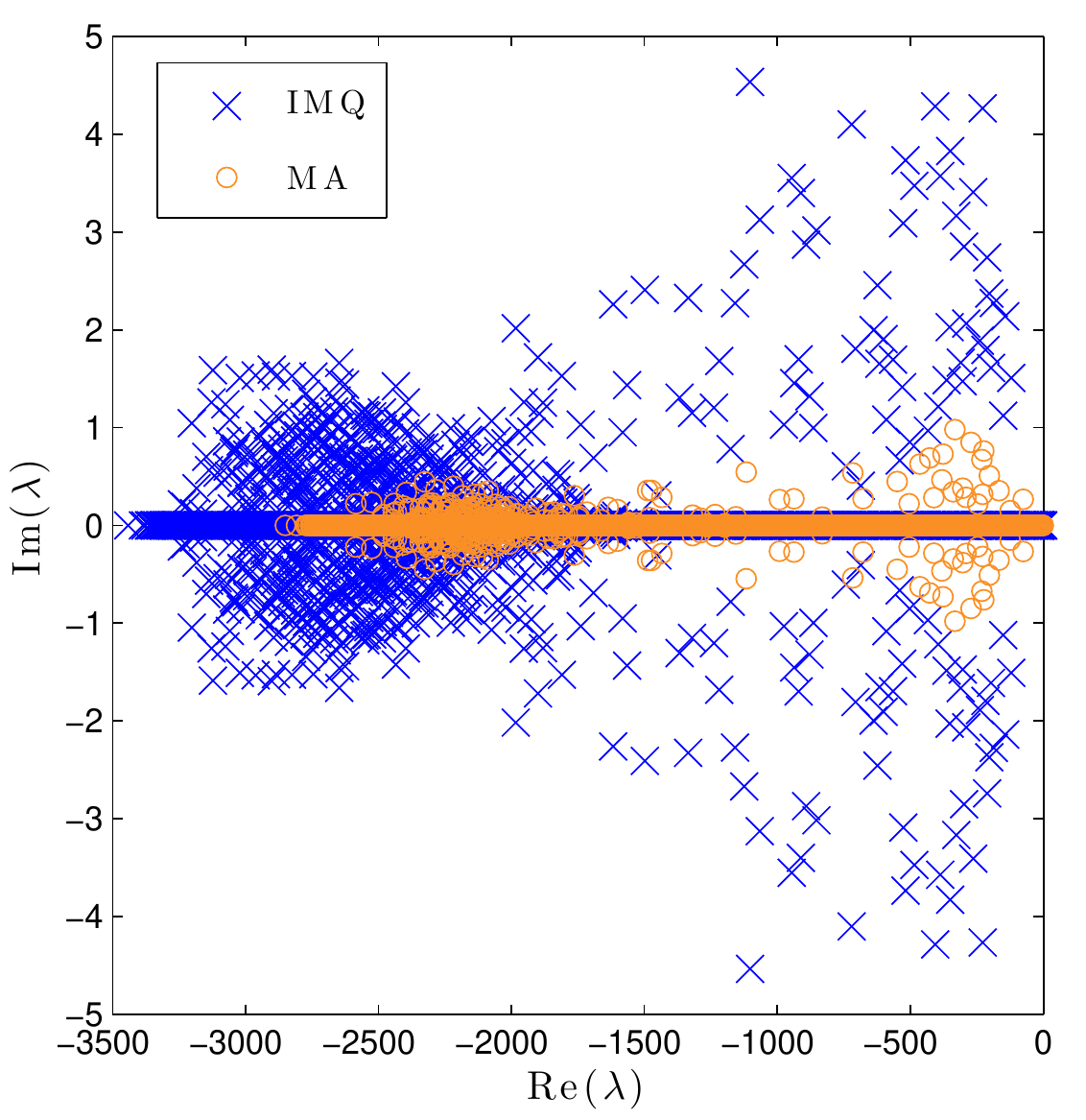} & 
\includegraphics[width=0.32\textwidth]{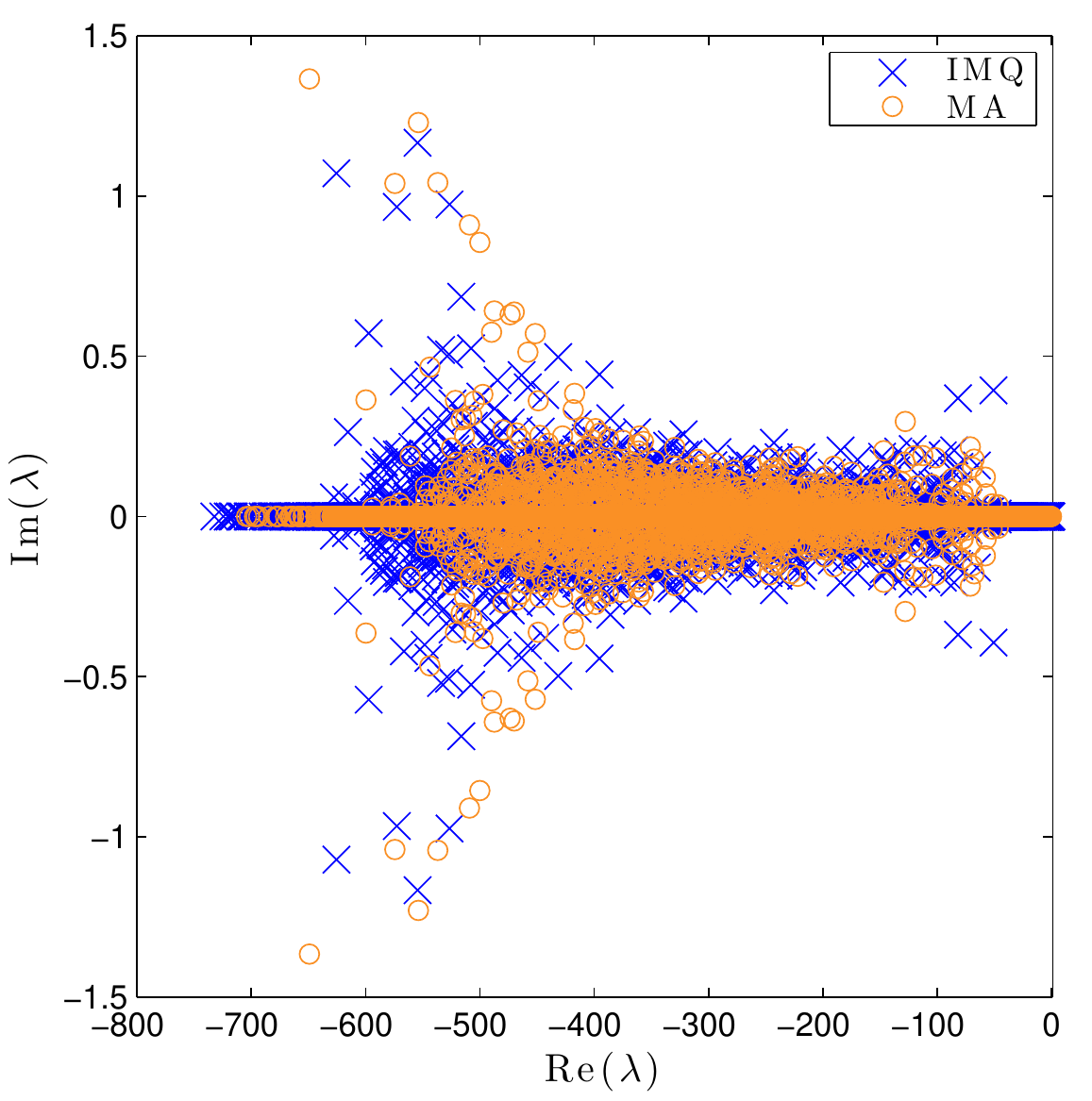} &
\includegraphics[width=0.32\textwidth]{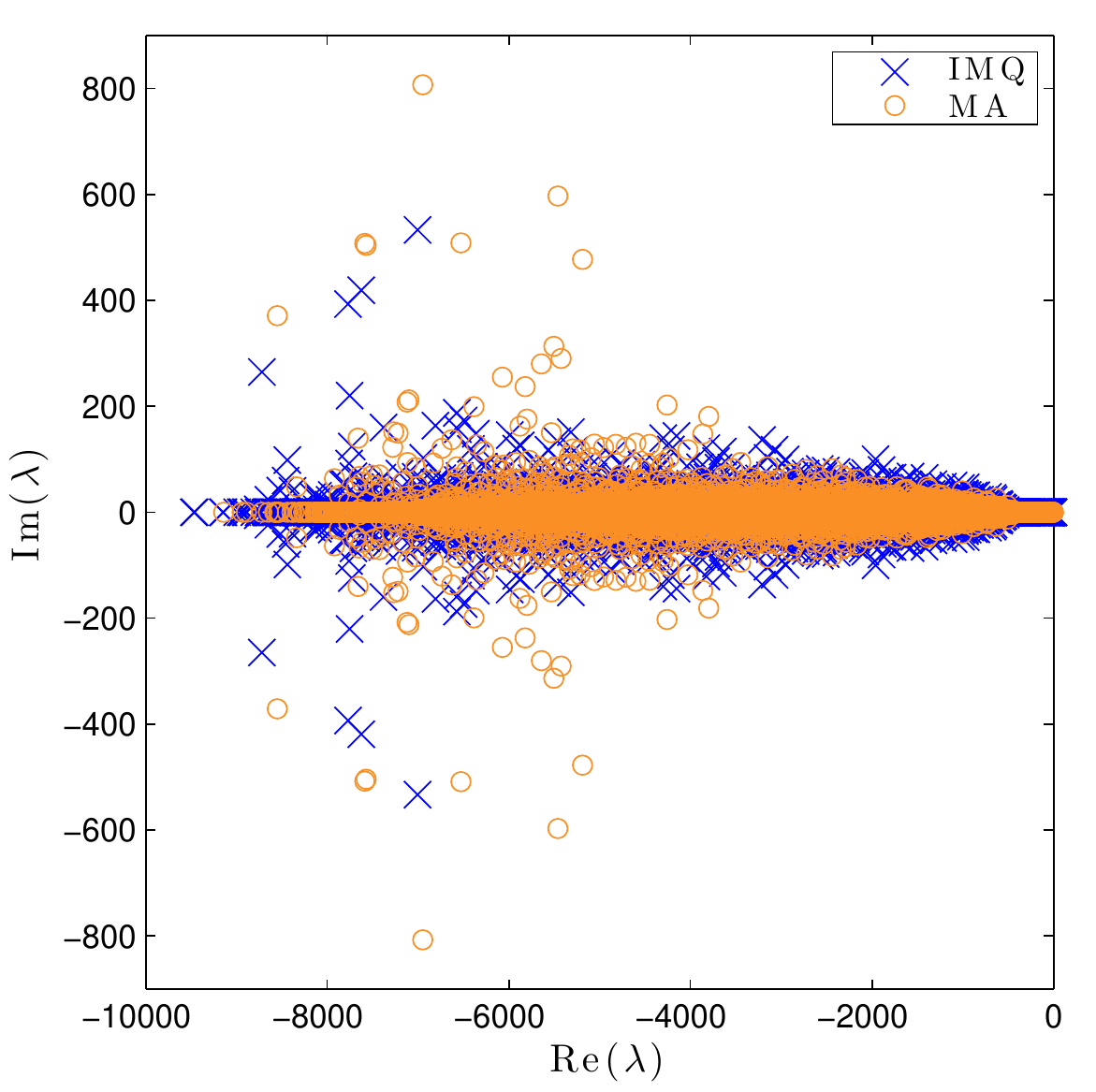}
\\
(d) Torus & (e) Dupin's cyclide & (f) Bretzel2
\end{tabular}
\caption{Eigenvalues of the discrete Laplacian for the surfaces and node sets shown in  Figure \ref{fig:ex_surfaces}.  $\times$'s correspond to the IMQ kernel with shape parameters chosen as (a) $\ep=2.8$, (b) $\ep=4$, (c) $\ep=5$, (d) $\ep=2.8$, (e) $\ep=2$, and (f) $\ep=6.5$, while $\circ$'s correspond to the $\nu=7$ Mat\'ern (MA) kernel with shape parameters chosen as (a) $\ep=8$, (b) $\ep=12$, (c) $\ep=15$, (d) $\ep=8$, (e) $\ep=6$, and (f) $\ep=16$. \label{fig:egvls}}
\end{figure}

For node sets that are not sufficiently dense, we have found cases where some eigenvalues of the differentiation matrix shift over to the right half-plane.  To illustrate this, we repeat the above experiment for the ``bumpy sphere'' with only $N=1035$ nodes instead of $N=5256$.   Figure \ref{fig:unstable_egvls}(a) shows this reduced node set on the ``bumpy sphere'', while part (b) of this figure shows the corresponding eigenvalues of the discrete Laplacian for both the $\nu=7$ Mat\'ern and IMQ kernels.  The shape parameters here were chosen so that the condition numbers of \eqref{eq:rbf_lin_sys} are similar to those from the $N=5256$ experiments.  We can see from this latter figure that the majority of the eigenvalues fall in the left half plane for both kernels, but that there are now a few that fall in the right half plane.   Fortunately, these ``unstable'' eigenvalues appear to correspond to high frequency eigenvectors as illustrated in part (c) of Figure \ref{fig:unstable_egvls} for the Mat\'ern kernel (a similar result holds for the IMQ kernel and the results are thus omitted).  Unphysical eigenmodes of this type also appear in other contexts of kernel methods for PDEs~\cite{FlyerWright07,FlyerWright09,FornbergLehto2011}.   For these applications, Fornberg and Lehto~\cite{FornbergLehto2011} describe a  global hyperviscosity stabilization procedure for selectively shifting the unstable eigenvalues to the left half plane without a reduction in accuracy.   Given the similarity of the unstable eigenmodes in the current problem, it seems reasonable to expect this hyperviscosity procedure to also be applicable in the present context.   In the numerical experiments of this study that follow in the next section, all eigenvalues of discrete Laplacian lie in the left half plane.  We leave exploration of stabilization via hyperviscosity to a separate study.

\begin{figure}[t]
\centering
\begin{tabular}{ccc}
\includegraphics[width=0.30\textwidth]{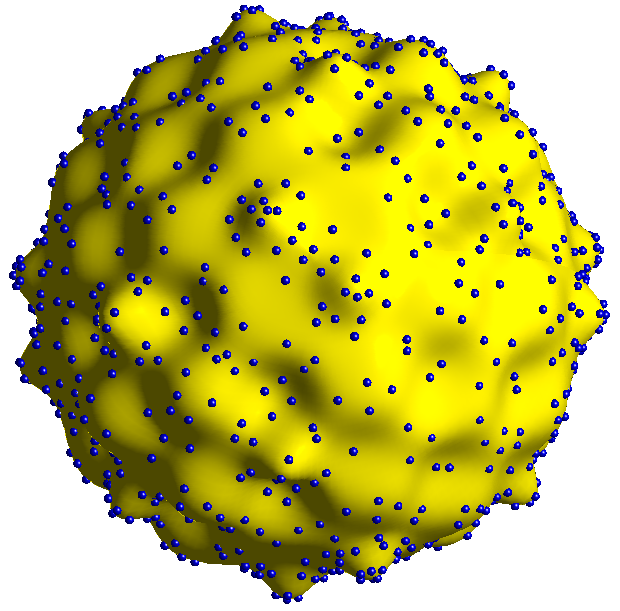} & 
\includegraphics[width=0.38\textwidth]{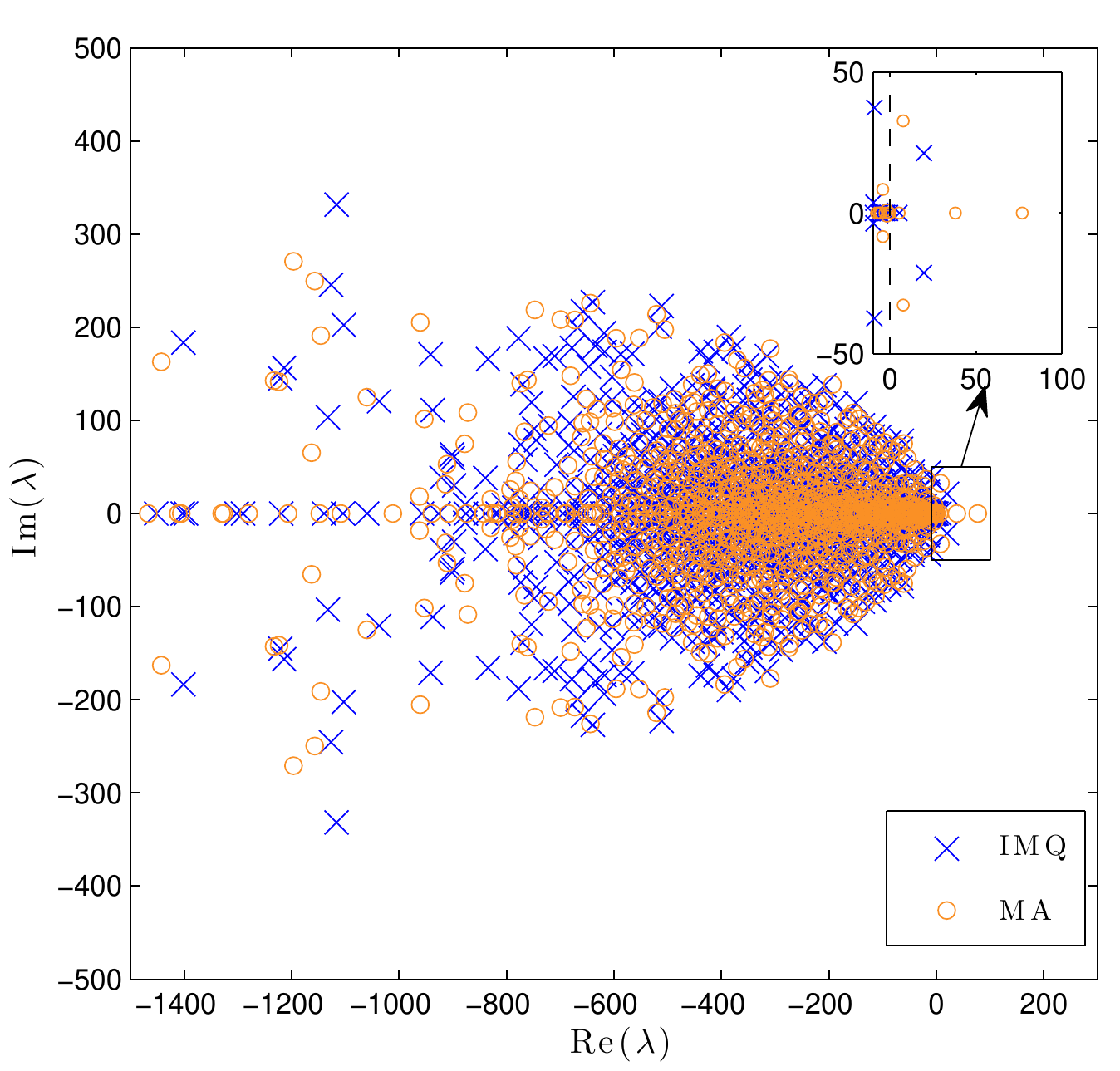} &
\includegraphics[width=0.28\textwidth]{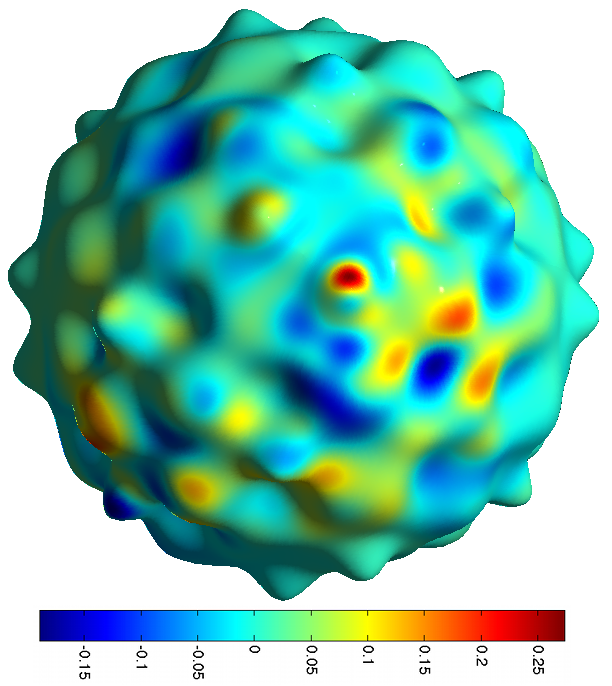}
\\
(a)  & (b)  & (c) 
\end{tabular}
\caption{(a) ``Bumpy sphere'' with $N=1035$ nodes.  (b) Eigenvalues of the discrete Laplacian corresponding to part (a).  $\times$'s correspond to the IMQ kernel with shape parameter $\ep=2$, while $\circ$'s correspond to the $\nu=7$ Mat\'ern (MA) kernel with shape parameter $\ep=6$.  (c) Eigenvector corresponding to the eigenvalue of the discrete Laplacian with the largest real part shown in part (b) using the $\nu=7$ Mat\'ern kernel.\label{fig:unstable_egvls}}
\end{figure}

\section{Numerical convergence results}\label{sec:numerical_results}
In this section we demonstrate the convergence of our numerical method for approximating the scalar diffusion equation with forcing \eqref{eq:diffusion_scalar} on two surfaces: the unit sphere $\Sphere^2$ and the torus shown in Figure \ref{fig:ex_surfaces}(d).  In all the tests we use the standard fourth-order backward differentiation formulae (BDF4) method for advancing the approximate solution in time.  The time-step is set to $\dt = 10^{-4}$ to ensure that spatial errors dominate over temporal errors. 

Results for three kernels of varying smoothness are presented to illustrate the different convergence rates that are possible.  The first two are the $\nu=4$ and $\nu=6$ kernels from the Mat\'ern family \eqref{eq:matern}, which are $C^{4}(\R^3)$ and $C^{8}(\R^3)$, respectively.  The third kernel is the infinitely smooth IMQ \eqref{eq:imq}.  For the Mat\'ern kernels, the analysis from Section \ref{sec:convergence} applies so we expect an algebraic rate of decay in the approximate solutions.  While we don't have estimates on the approximation of the surface Laplacian for the IMQ kernel, the estimates from Result 1 in Section \ref{sec:convergence} on the discrete surface gradient and divergence suggest that this infinitely smooth kernel should give rates of approximation that are faster than any polynomial order (provided the underlying target is sufficiently smooth).  We fix the shape parameters of the kernels in all the tests as follows: $\ep = 4$ for $\nu=4$ Mat\'ern, $\ep=8$ for $\nu=8$ Mat\'ern, and $\ep=3$ for IMQ.  These values have not been optimized and are chosen simply to illustrate the convergence rates of our method.

Finally, we measure the errors in the numerical solutions using both the discrete two-norm and max-norm.   To define the discrete two-norm we use an approximation of the continuous $L_2(\M)$-norm based on quadrature rules defined at the collocation nodes.  We use the following abuse of notation to denote our discrete two-norm:
\begin{equation*}
\|f\|_{L_2(\M)} := \lp\int_{\M} [f(\vx)]^{2} d\vx\rp^{1/2} \approx \lp\sum_{i=1}^N w_i [f(\vx_i)]^2\rp^{1/2} := \|f\|_{\ell_2},
\end{equation*}
where $\{w_i\}_{i=1}^{N}$ are quadrature weights for the surface integral at the collocation nodes $\{\vx_i\}_{i=1}^{N}$ on the manifold.  These weights are computed using a second-order accurate midpoint rule approximation on the surface.  We use the standard definition for discrete max-norm and denote it with the standard notation of $\ell_{\infty}$.


\subsection{Diffusion on the unit sphere with forcing}
For the first example, we use a test problem first presented in~\cite{Calhoun:2009}. For this test, an artificial solution to \eqref{eq:diffusion_scalar} is specified (with $\delta = 1$) and the forcing function $f$ is chosen so that this solution is maintained for all time.   The solution is given by
\begin{align}
u(t,\vx) = \exp(-5t) \sum_{k=1}^{23}  \exp\lp-10\arccos(\vxi_k \cdot \vx)\rp \label{eq:sph_ex_solution},
\end{align}
where $\vxi_k$, $k=1,\ldots,23$ are randomly placed points on the surface of the sphere.  The term in the sum is a Gaussian centered at $\vxi_k$ with the distance measured using the geodesic distance.  The solution is clearly $C^{\infty}(\Sphere^2)$ and its value at $t=0$ is displayed in Figure \ref{fig:error_diffusion_sphere}(a).  In our results, the forcing function corresponding to \eqref{eq:sph_ex_solution} is computed analytically and evaluated implicitly in time.  

\begin{figure}[t]
\centering
\begin{tabular}{cc}
\includegraphics[width=0.4\textwidth]{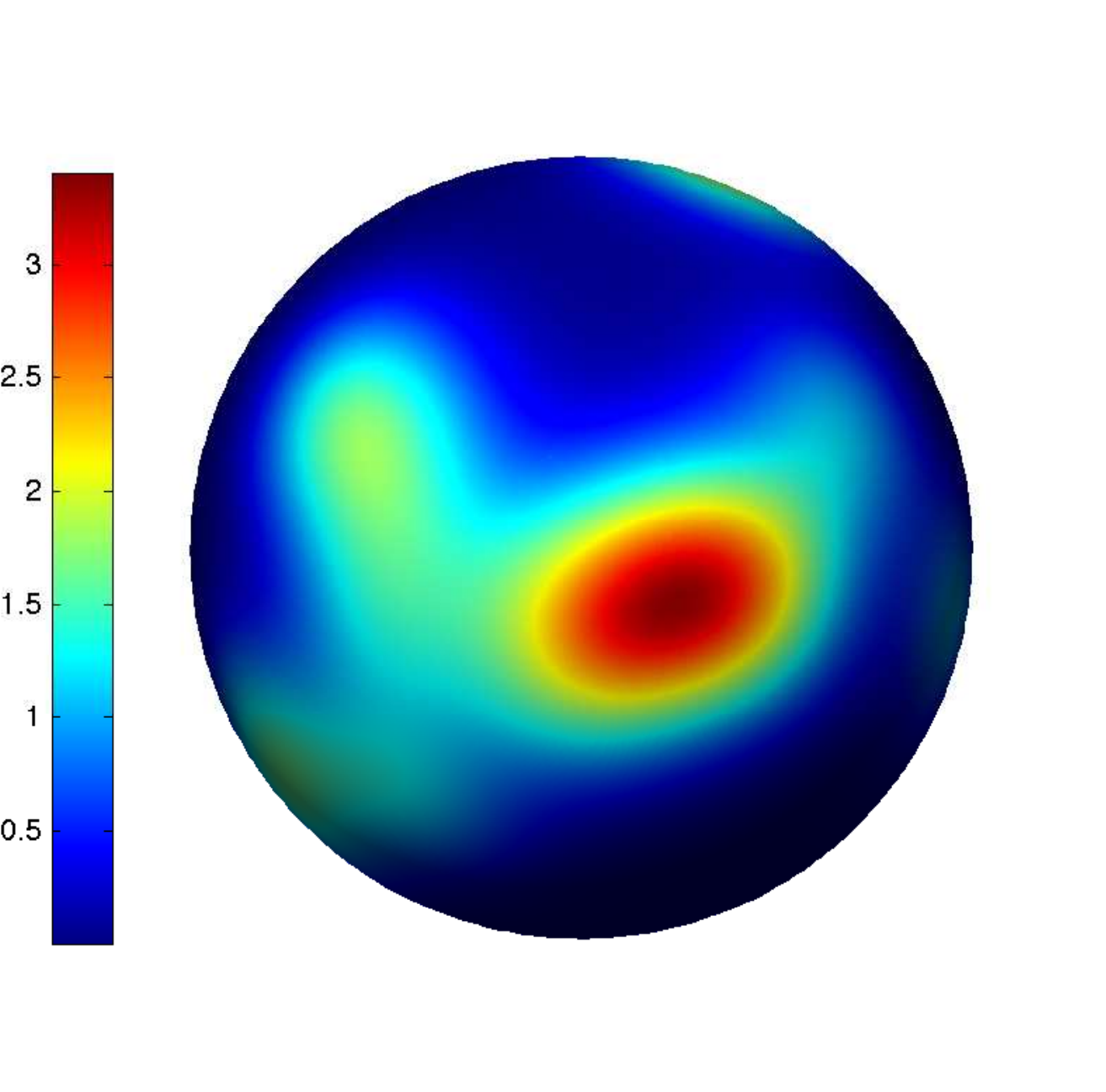} & 
\includegraphics[width=0.4\textwidth]{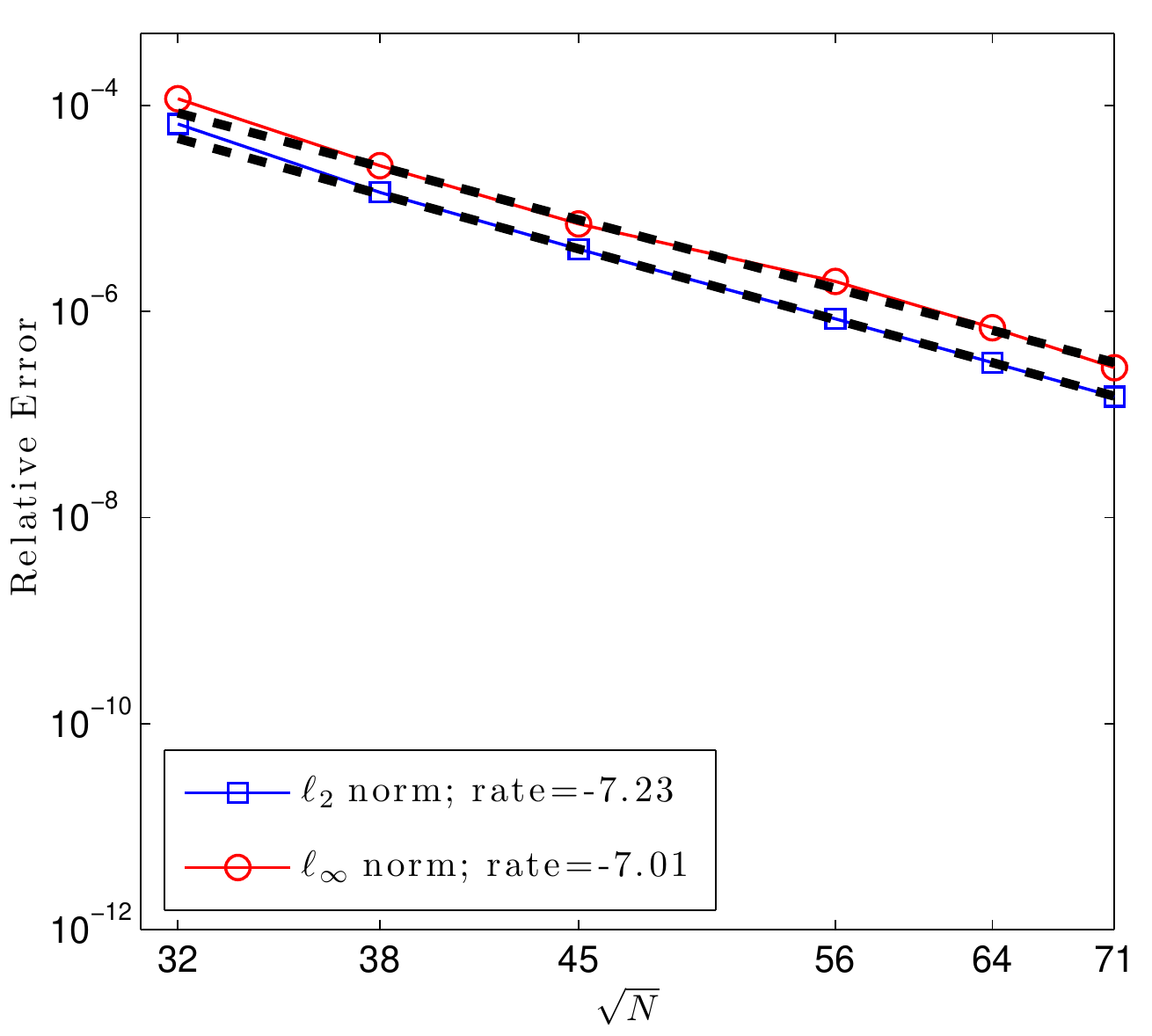} \\
(a) Initial condition & (b) Mat\'ern $\nu=4$ \\ 
\includegraphics[width=0.4\textwidth]{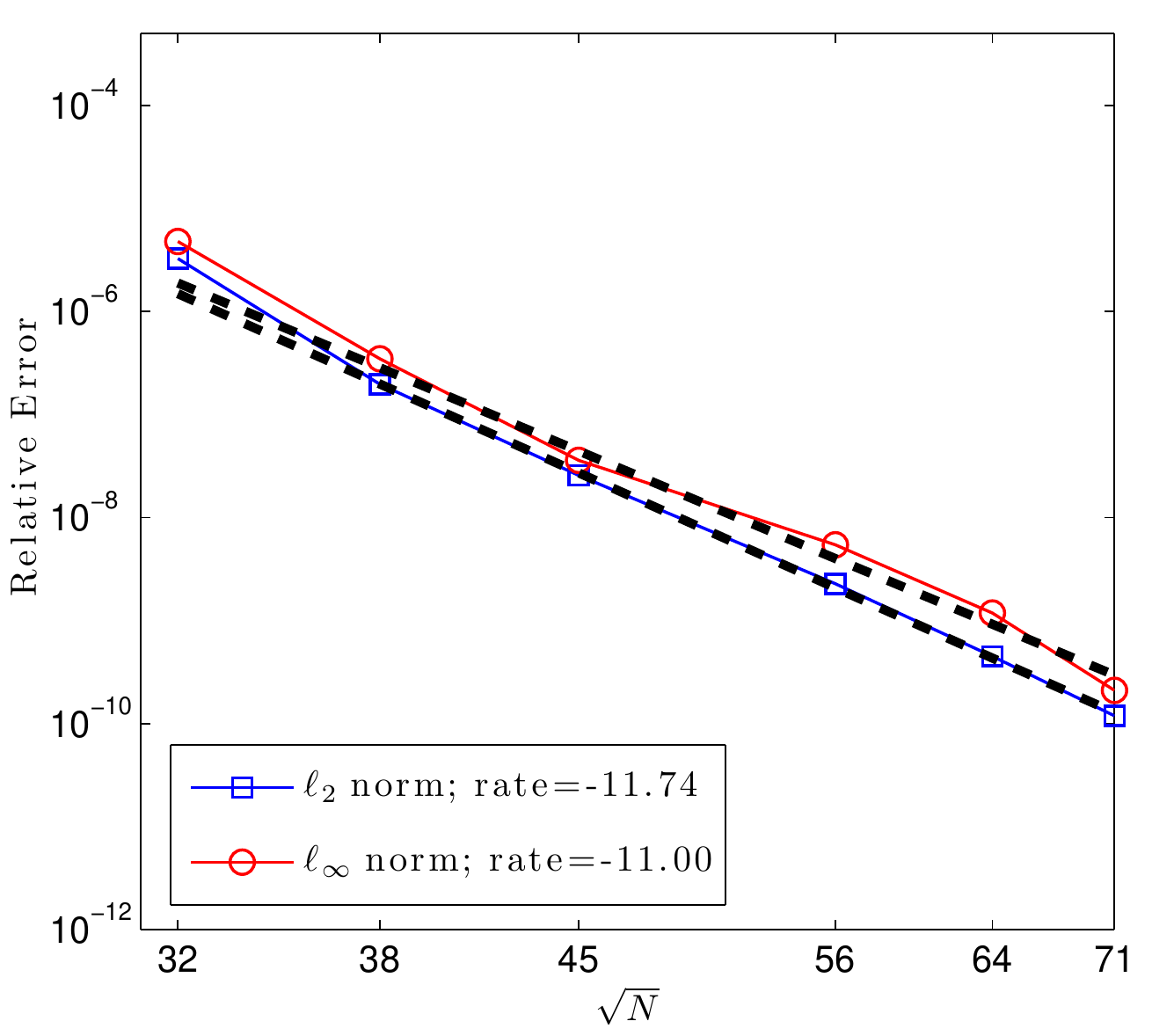} &
\includegraphics[width=0.4\textwidth]{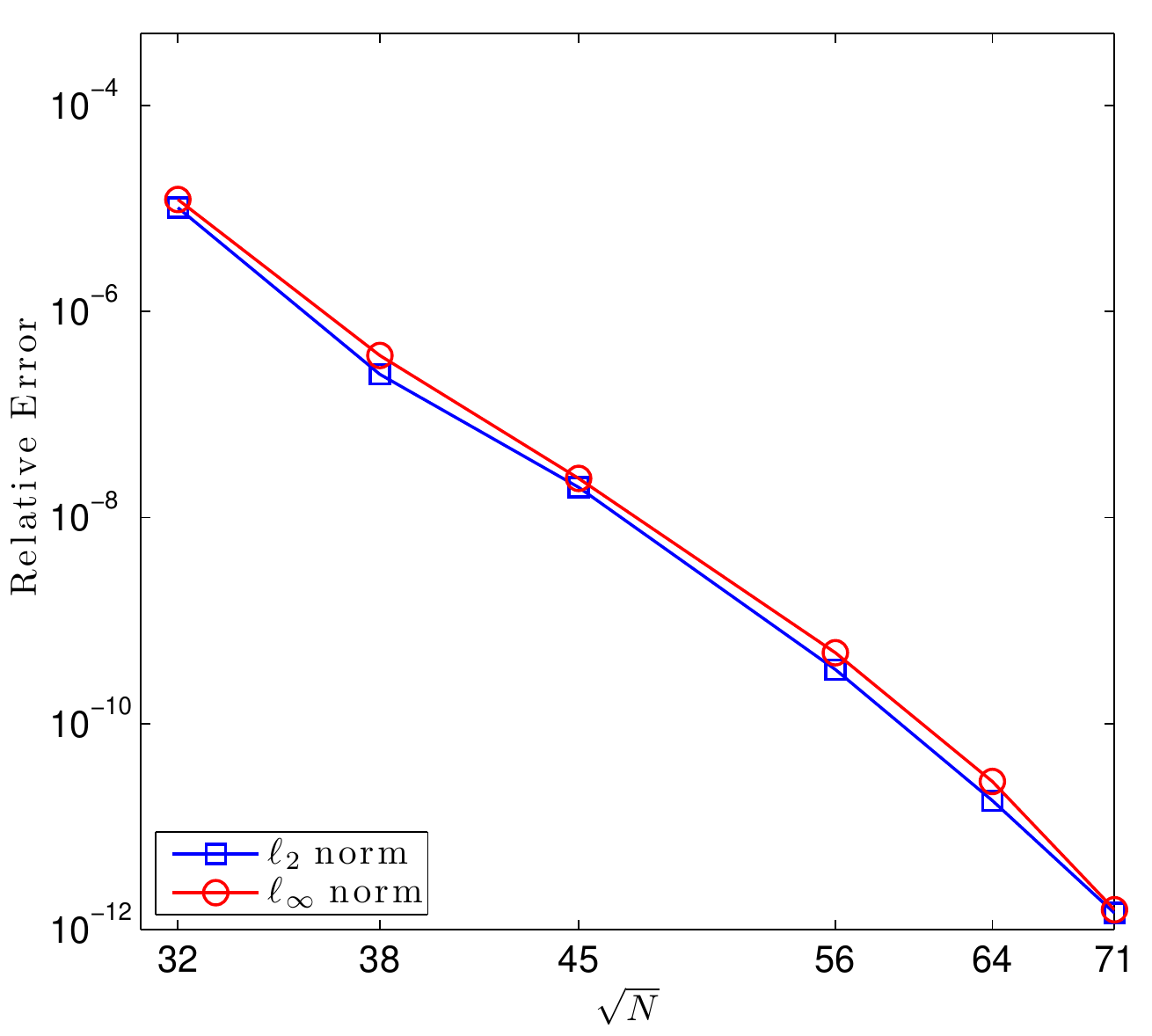} \\
(c) Mat\'ern $\nu=6$  & (d) IMQ 
\end{tabular}
\caption{(a) Initial condition for the forced diffusion problem on the unit sphere.  (b)--(c) Convergence results for the forced diffusion problem for three different radial kernels.  These last three plots are displayed on a log-log scale with the horizontal axis being the square root of the number of collocation nodes $N$, which satisfies $\sqrt{N} \sim 1/h$.   Black dashed lines in (b) and (c) are the lines of best fit to the last 4 values and the slopes of the lines are given in the legends. \label{fig:error_diffusion_sphere}}
\end{figure}

The test calls for comparing the errors in the approximate solution of \eqref{eq:diffusion_scalar} at time $t = 0.2$ at various spatial resolutions.  For our spatial resolutions, we use minimum energy (ME) node sets on the unit sphere discussed in Appendix \ref{apndx:sphere}.  The different sizes of the node sets are $N=1024, 1444, 2025, 3136, 4096$, and $5041$, which are all perfect squares.  Figures \ref{fig:error_diffusion_sphere}(b)--(d) display the results of the convergence study for these node sets and the three respective radial kernels.  Since the mesh norm for the ME nodes satisfies $h \sim 1/\sqrt{N}$, we plot the relative errors vs. $\sqrt{N}$.  All three convergence plots are on a $\log$-$\log$ scale. 

Figures \ref{fig:error_diffusion_sphere}(b) and (c) illustrate the algebraic convergence rate of the approximate solution for the two Mat\'ern kernels.  The black dashed line in these figures shows the lines of best fit to the data and the slope of the different lines is given in the legend.   Since the solution is $C^{\infty}(\Sphere^2)$, we expect the estimates from Result 3 to apply to the approximations of the surface Laplacian in this test problem.  Note that node sets we are using are quasi-uniform, so the mesh ratio that appears in Result 3 will have little or no effect here. For the $\nu=4$ Mat\'ern kernel this result predicts the approximate surface Laplacian to converge like $\bO(h^5)$ in the $\ell_2$-norm and $\bO(h^4)$ for the $\ell_{\infty}$-norm, while for the $\nu=6$ Mat\'ern kernel these convergence rates should be $\bO(h^9)$ and $\bO(h^8)$.  We see from Figures \ref{fig:error_diffusion_sphere}(b) and (c), however, that the measured rates of convergence for the numerical solutions to the forced diffusion problem \eqref{eq:diffusion_scalar} are much higher than the predicted rates for the surface Laplacian.  One reason for these higher rates may be that the estimates from Result 3 apply to the entire manifold, while we are only measuring the errors at the collocation nodes (which are the only locations where the approximate solution is known).  We may therefore be benefitting from some form of super-convergence, which is known to occur at the nodes in certain periodic spline methods~\cite{MR972466}.  A more detailed study of this phenomenon is currently under way~\cite{FuselierWrightSuperConverg}.

Figure \ref{fig:error_diffusion_sphere}(d) displays the results for the IMQ kernel.  The results in this plot suggest that the convergence of the approximate solution is converging at a rate faster than any polynomial order.   Further numerical investigations not presented here seem to indicate that the solution is converging at an exponential rate.  This type of convergence has also been observed for collocation methods based on infinitely smooth kernels for the scalar transport and nonlinear shallow water wave equations on the surface of the sphere~\cite{FlyerWright07,FlyerWright09}.

\subsection{Diffusion on a torus with forcing}
For the second example, we again consider computing the solution of \eqref{eq:diffusion_scalar}, but with the torus shown in Figure \ref{fig:ex_surfaces}(d) and described in Appendix \ref{appndx:torus}.  Like the previous test, we specify the solution and then choose the forcing function so that this solution is maintained for all time.  The solution is given by  
\begin{align}
u(t,\vx) = \frac18 \exp(-5t)x \lp x^4 - 10x^2 y^2 + 5 y^4\rp \lp x^2 + y^2 - 60 z^2 \rp,
\label{eq:torus_test_func}
\end{align}
where $\vx$ is a point on the Torus; see Figure \ref{fig:error_diffusion_torus}(a) for a plot of $u(0,\vx)$.  We compute the forcing function analytically and evaluate it implicitly in the BDF4 scheme.  A key ingredient to computing the forcing function is determining the surface Laplacian of $u(t,\vx)$.  For convenience, we provide the equation for this below:
\begin{multline}
\laps u(t,\vx) =  -\frac{3}{8\rho^2}\exp(-5t)\left[x (x^4 - 10 x^2 y^2 + 5 y^4)\right.\\
\left.(10248 \rho^4 - 34335 \rho^3 + 41359 \rho^2 - 21320 \rho  + 4000)\right],
\end{multline}
where $\rho = \sqrt{x^2 + y^2}$.

As in the previous test, we compare the errors in the approximate solution of \eqref{eq:diffusion_scalar} at time $t = 0.2$ at various spatial resolutions.  We also use ME node sets on the torus for the spatial discretization as discussed in Appendix \ref{appndx:torus}.  The sizes of the different node sets are $N=500, 750, 1000, 2000, 3000$, and $4000$.  The results of the convergence study for these node sets and the three radial kernels are displayed in Figures \ref{fig:error_diffusion_torus}(b)--(c).  As in the previous test, the mesh norm for the ME nodes on the torus satisfies $h \sim 1/\sqrt{N}$, so we again plot the relative errors vs. $\sqrt{N}$.

\begin{figure}[tbh]
\centering
\begin{tabular}{cc}
\includegraphics[width=0.4\textwidth]{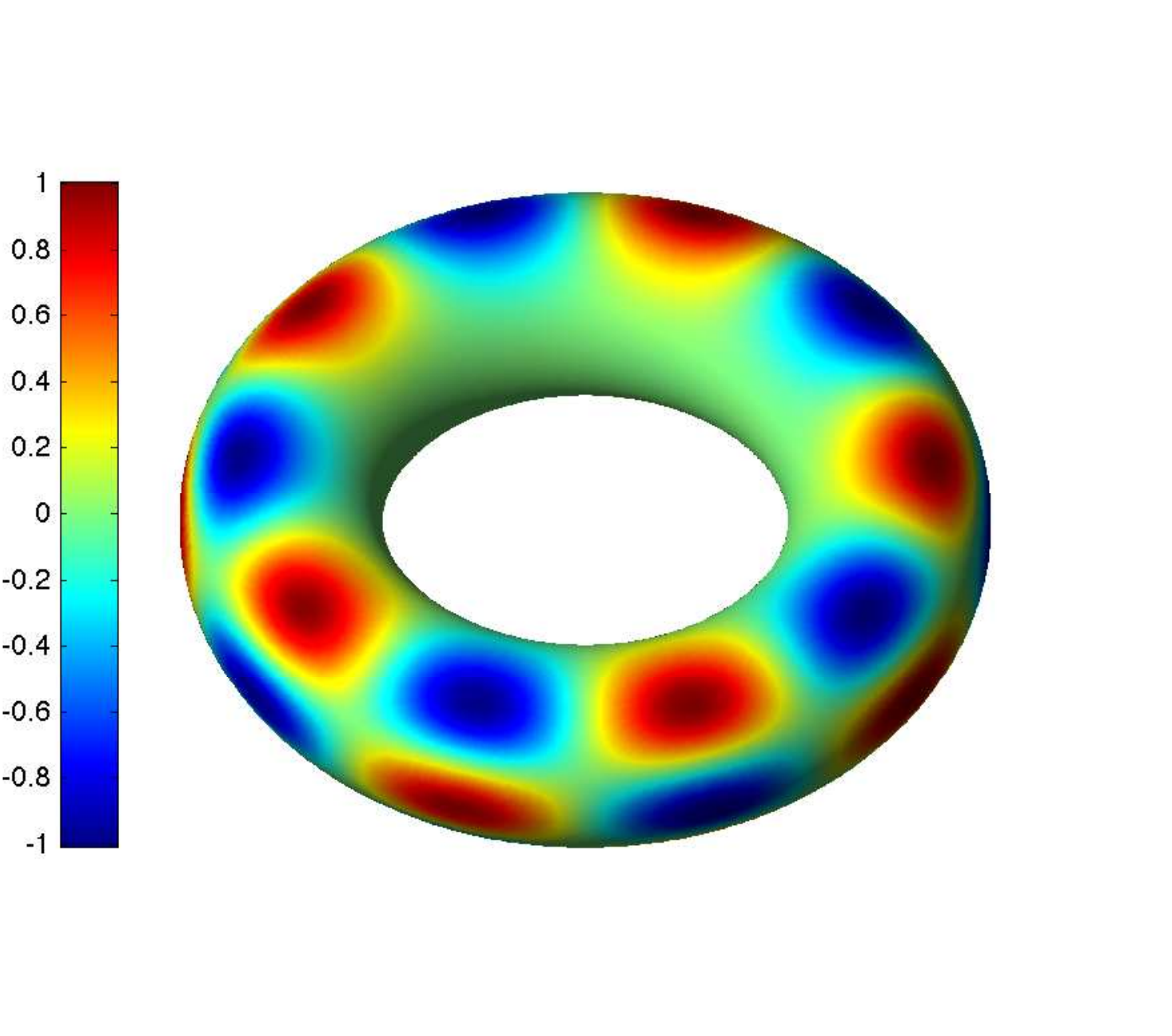} & 
\includegraphics[width=0.4\textwidth]{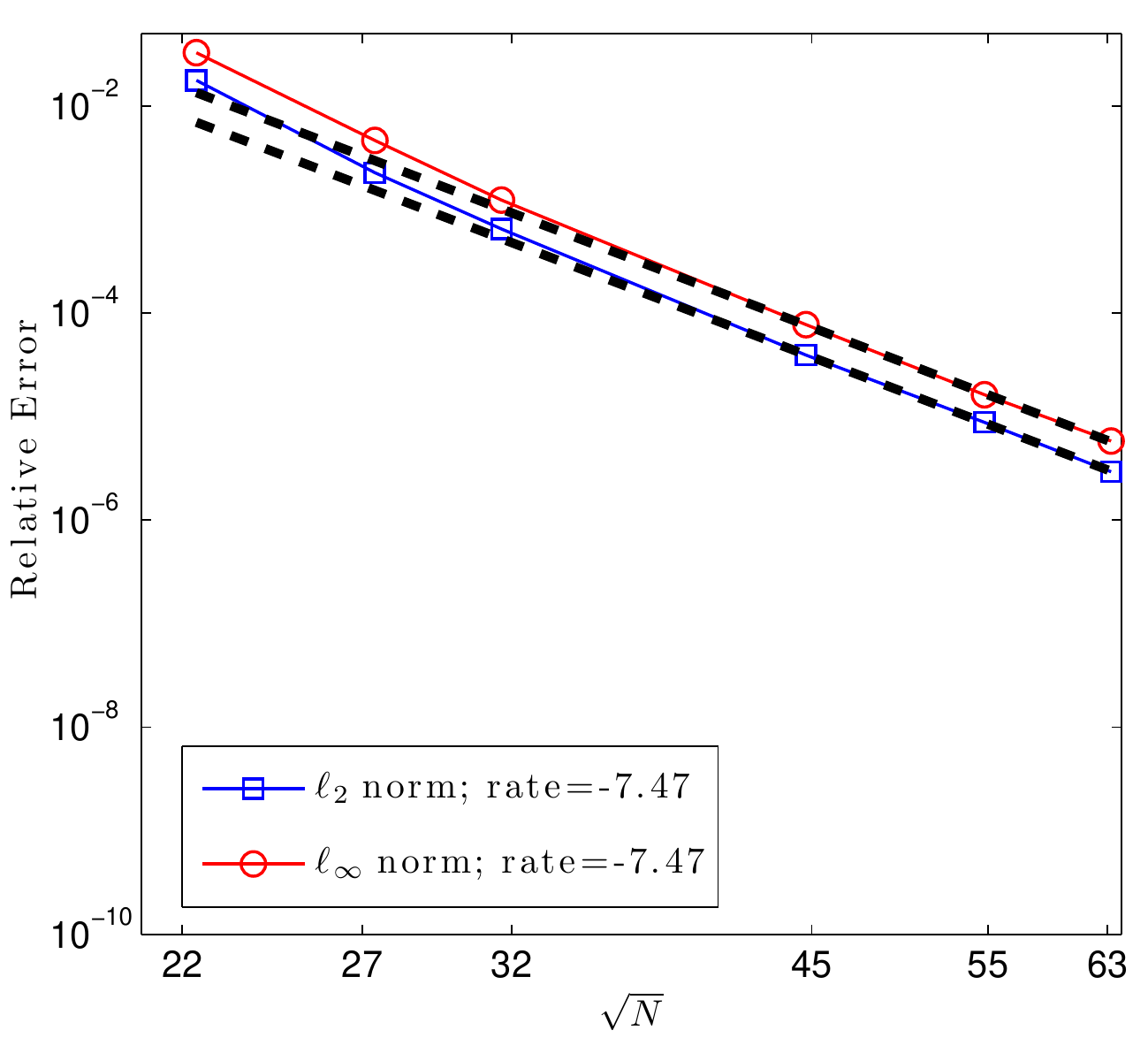} \\
(a) Initial condition & (b) Mat\'ern $\nu=4$ \\ 
\includegraphics[width=0.4\textwidth]{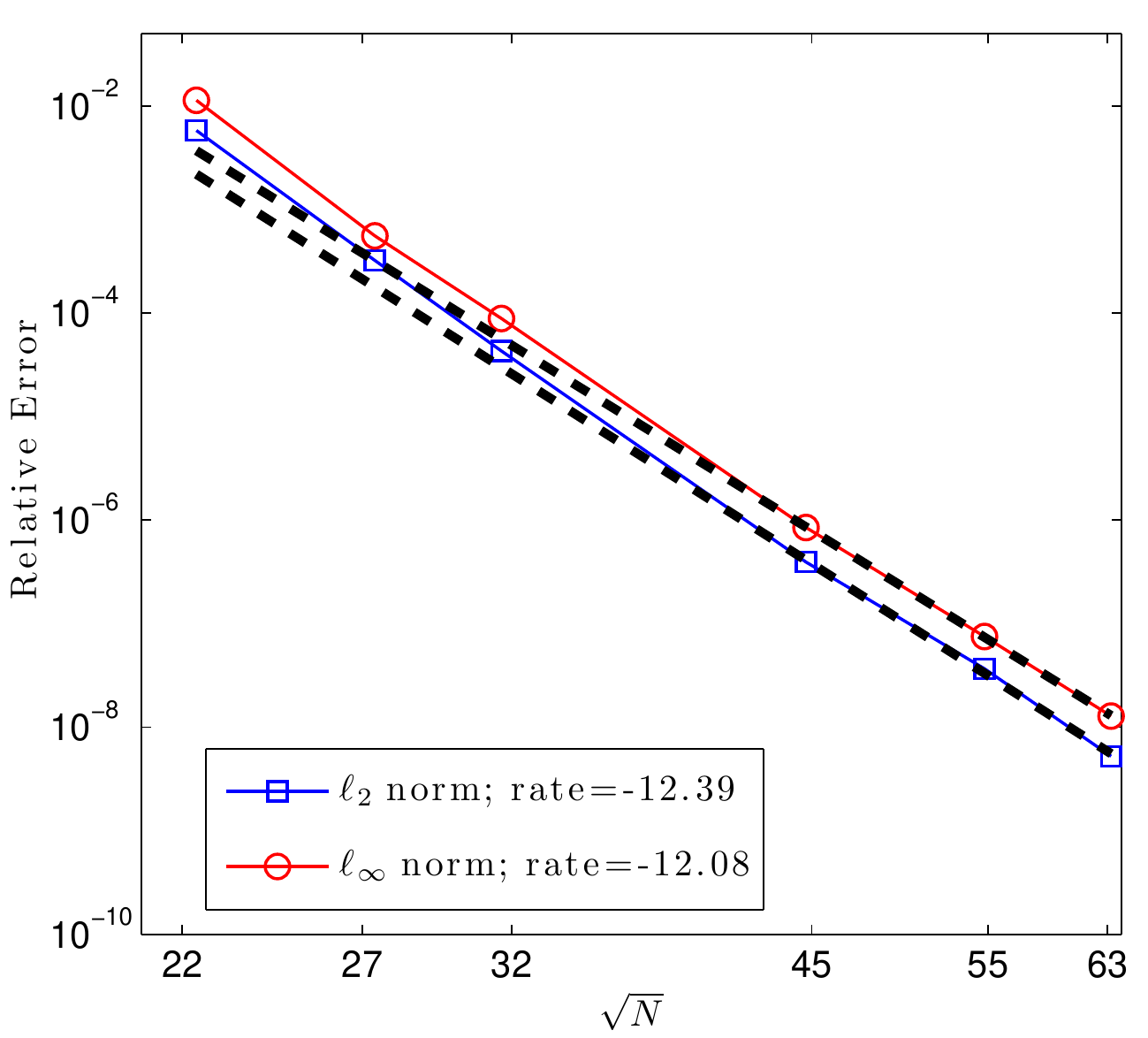} &
\includegraphics[width=0.4\textwidth]{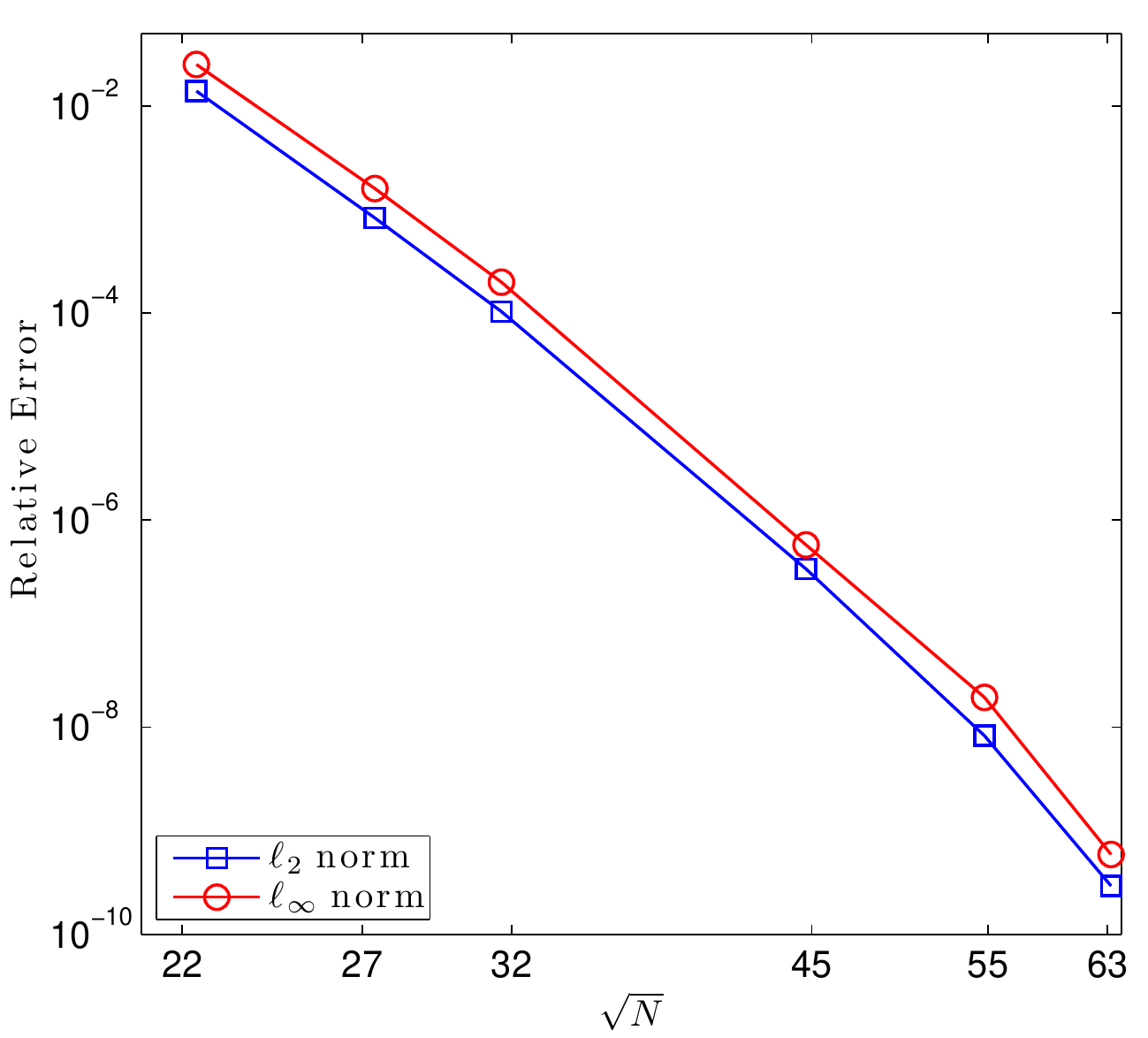} \\
(c) Mat\'ern $\nu=6$  & (d) IMQ 
\end{tabular}
\caption{(a) Initial condition for the forced diffusion problem on a torus.  (b)--(c) Convergence results for the forced diffusion problem for three different radial kernels.  These last three plots are displayed on a log-log scale with the horizontal axis being the square root of the number of collocation nodes $N$, which satisfies $\sqrt{N} \sim 1/h$.   Black dashed lines in (b) and (c) are the lines of best fit to the last 3 values and the slopes of the lines are given in the legends. \label{fig:error_diffusion_torus}}
\end{figure}

Figures \ref{fig:error_diffusion_torus}(b) and (c) display the results for the $\nu=4$ and $\nu=6$ Mat\'ern kernels, respectively.   Like the previous test, we clearly see the algebraic rates of convergence for these kernels, and the measured rates of convergence of the approximate solutions for both kernels are much higher than the predicted rates of convergence for the discrete Laplacian from Result 3.    We again believe these higher rates may be related to our restriction of measuring the errors at the collocation nodes.  Finally, we note that the measured convergence rates for the current test problem and the previous one are comparable, with only a slightly higher observed rate for the current one.

Figure \ref{fig:error_diffusion_torus}(c) displays the results for the IMQ kernel.  The results indicate the approximate solution is converging at a rate higher then any polynomial degree, which is entirely inline with the previous results.  While not presented here, a further analysis of the numerical results indicate that the rate of convergence is again exponential for this kernel.

\section{Applications}\label{sec:applications}
In the applications we consider systems of PDEs of the form \eqref{eq:reaction_diffusion_system}.   We use the third-order, semi-implicit, backward differentiation formulae (SBDF3) method discussed and analyzed in~\cite{AscherRuuthWetton95} for advancing the semi-discrete systems in time.  This scheme treats the diffusion terms implicitly and the (non-linear) reaction terms explicitly.
We bootstrap the simulation with one step of the first order SBDF followed by one step of the second order SBDF (see~\cite{AscherRuuthWetton95} for details on these schemes).  Upon initial LU decompositions of the linear systems that result from the implicit discretization, each time-step requires $\bO(N^2)$ operations.

We only present results for the IMQ kernel using the same shape parameters as used in the stability tests (refer to the caption of Figure \ref{fig:egvls} for these values).  However, similar results were obtained when using other shape parameters and when using the Mat\'ern kernels.  

\subsection{Turing patterns}
Since Turing's classical paper~\cite{Turing1952} that suggested how certain non-linear models of reaction and diffusion can lead to stable, heterogeneous pattern formations, there has been an explosion of research in reaction-diffusion-type models for various kinds of morphogenesis~\cite{murray2003mathematical,Kondo24092010,MianiOthmer2000}.  Most of the theoretical and numerical investigations of these models have been restricted to flat planar domains, but there is growing interest in studying these models on more general surfaces to understand how curvature may effect pattern formation.  Additionally, there has been interest in the computer graphics community for using Turing models to generate interesting textures and patterns on surfaces~\cite{Turk:1991}.  

\begin{figure}[t!]
\centering
\begin{tabular}{cc}
\includegraphics[width=0.24\textwidth]{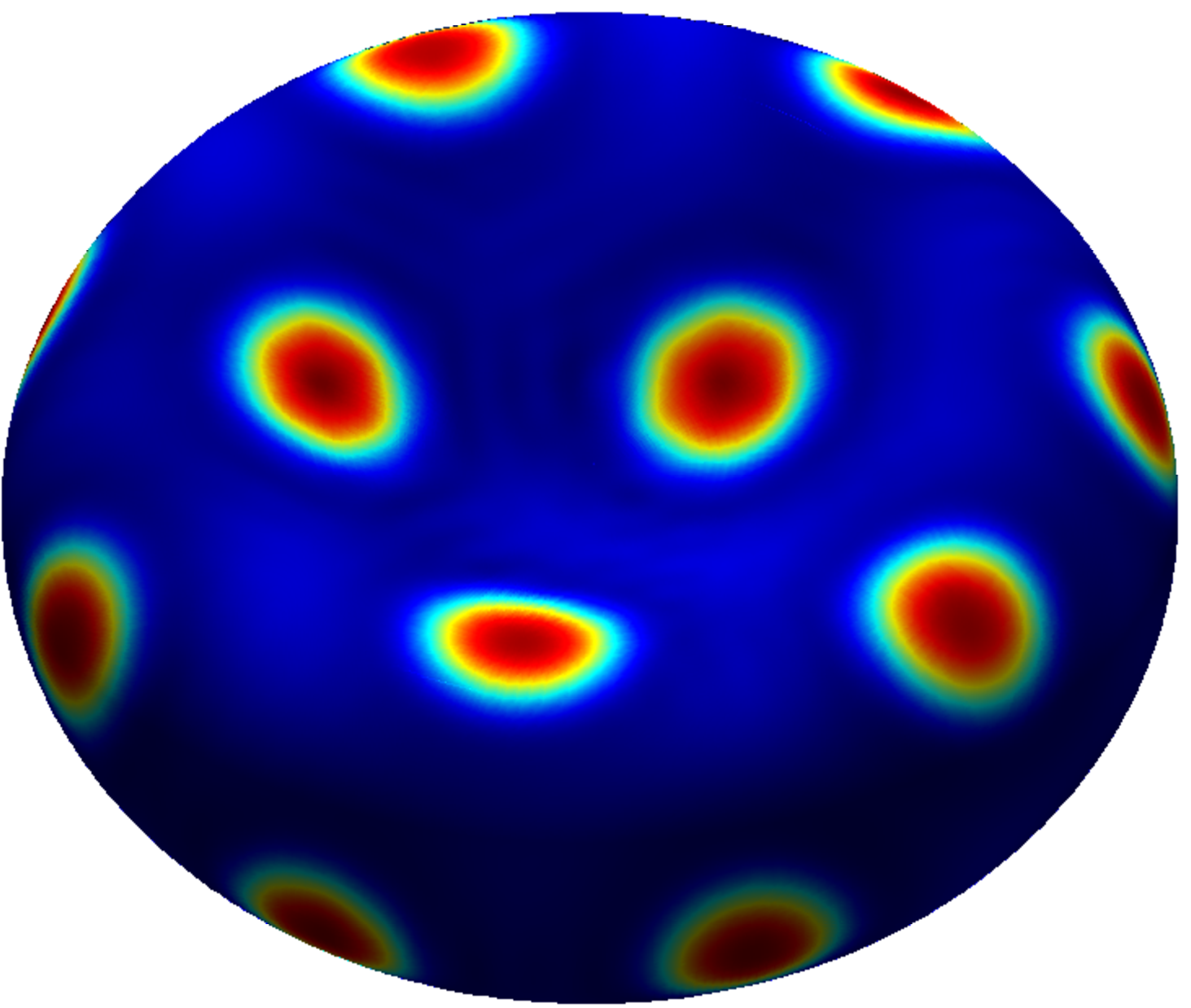} & \includegraphics[width=0.24\textwidth]{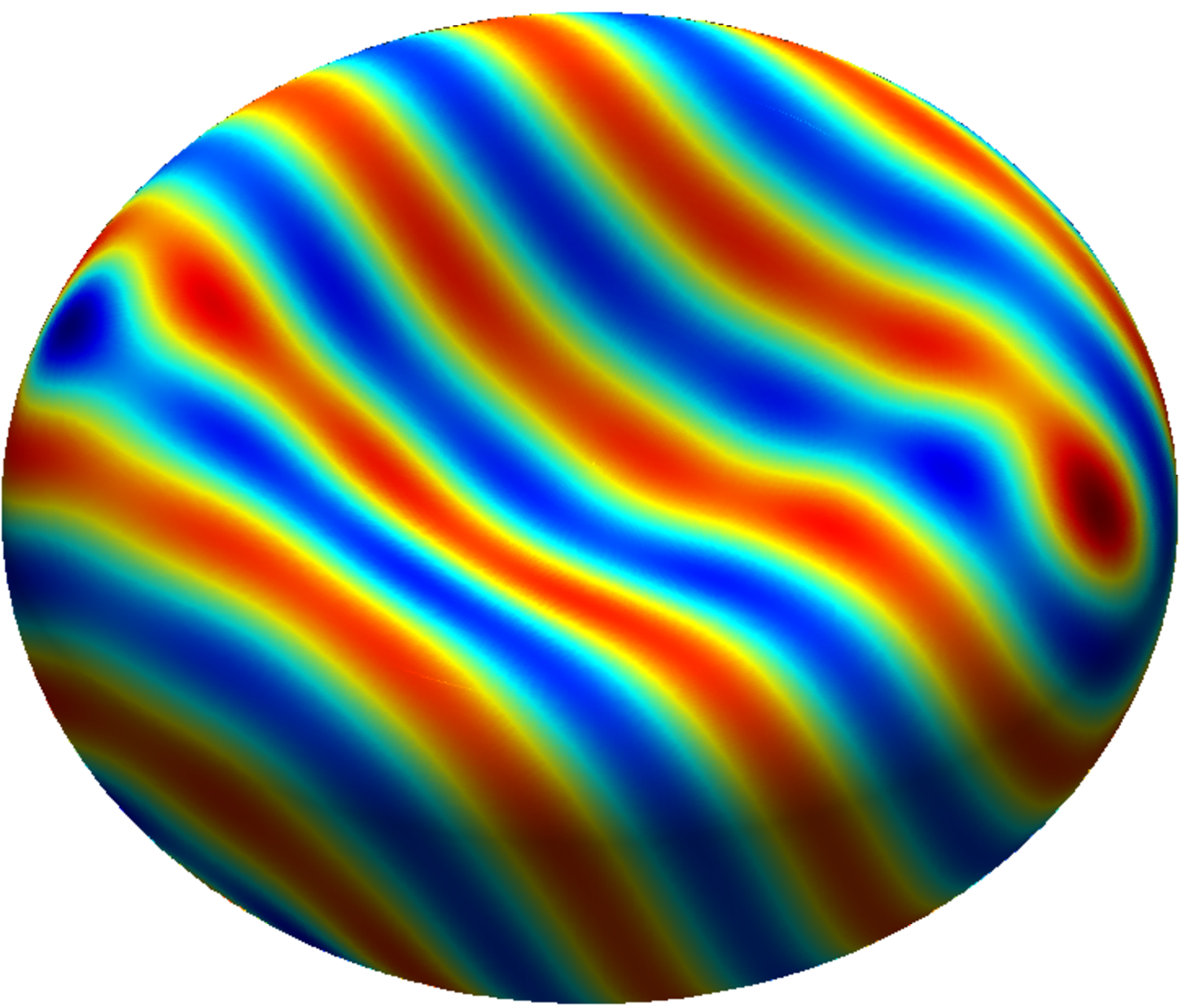} \\
\includegraphics[width=0.26\textwidth]{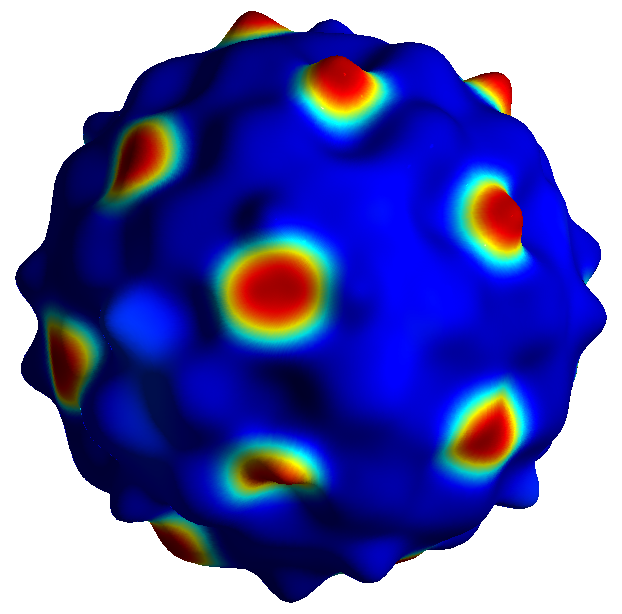} & \includegraphics[width=0.26\textwidth]{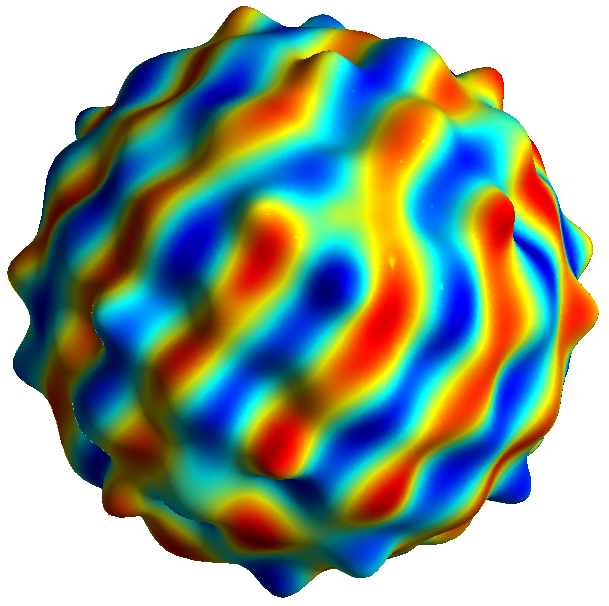} \\
\includegraphics[width=0.28\textwidth]{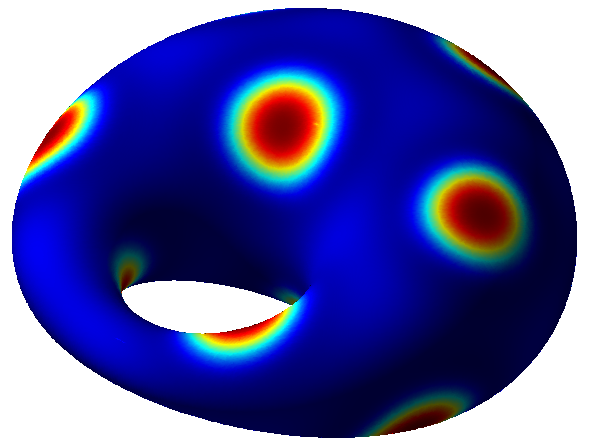} & \includegraphics[width=0.28\textwidth]{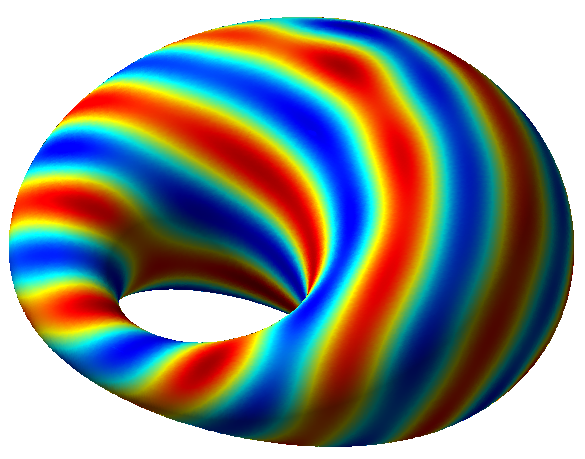} \\ 
\includegraphics[width=0.32\textwidth]{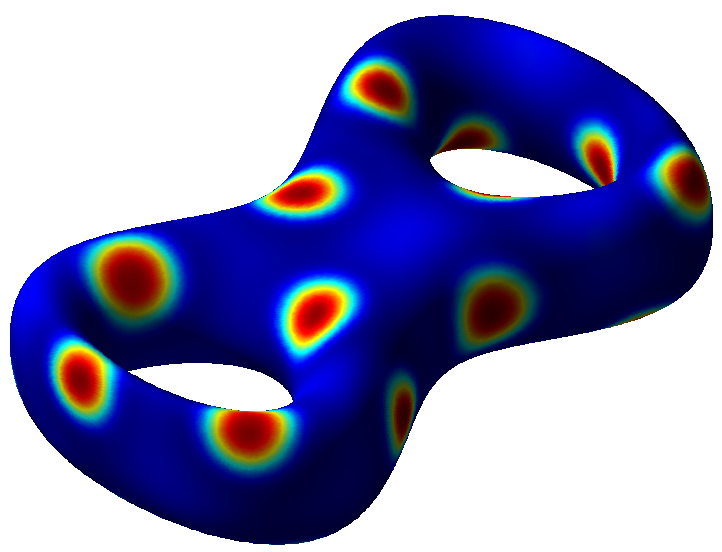} &
\includegraphics[width=0.32\textwidth]{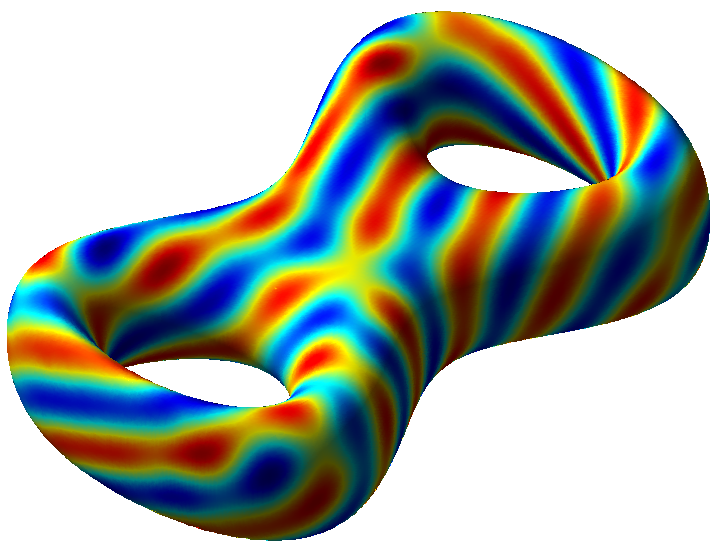}
\end{tabular}
\caption{Turing spot and stripe patterns computed from the model \eqref{eq:turing} on various surfaces.  The pseudocolor plots are for the activator  $u$ once steady state is reached.  In all plots red corresponds to a high concentration of $u$ and blue to a low concentration.  For all simulations the time-step was set to $\dt=0.01$; parameters used to get the different patterns are given in Table \ref{tbl:turing}.\label{fig:turing_spots}}
\end{figure}

We illustrate how our method can be applied to these types of pattern formation problems by applying it to the Turing system (a linearized Brusselator model) from~\cite{BarrioEtAl1999,VareaEtAl1999}:
\begin{align}
\begin{split}
\frac{\partial u}{\partial t} =& \delta_u\laps u + \underbrace{\alpha u (1 - \tau_1 v^2) + v(1 - \tau_2 u)}_{\ds f_u(u,v)}, \\
\frac{\partial v}{\partial t} =& \delta_v\laps v + \underbrace{\beta v \lp 1 + \frac{\alpha\tau_1}{\beta} u v\rp + u(\gamma  + \tau_2 v)}_{\ds f_v(u,v)}. 
\end{split}
\label{eq:turing}
\end{align}
Here $u$ and $v$ are morphogens with $u$ the ``activator'' and $v$ is the ``inhibitor''.  If $\alpha=-\gamma$ then $(u,v)=(0,0)$ is a unique equilibrium point of this system.  By changing the diffusivity rates of $u$ and $v$ an instability can form that leads to different pattern formations.  The cubic coupling parameter $\tau_1$ favors the formation of stripes, while the quadratic coupling parameter $\tau_2$ favors the formation spots~\cite{BarrioEtAl1999}.  The spot pattern formations are more robust than stripes and take far less time to reach ``steady-state''.  The model \eqref{eq:turing} and similar models have been previously used to illustrate the applicability of some of the other numerical methods for PDEs on surfaces~\cite{Calhoun:2009,BergdorfEtAl2010,RuuthMerriman2008,BertalmioEtAl2001,MacDondaldRuuth2009,Turk:1991,Piret2012}.  

Figure \ref{fig:turing_spots} shows the results from our numerical solutions of \eqref{eq:turing} on various surfaces using parameters that lead to both spot and stripe patterns.  Similar to the experiments on the surface of the sphere in~\cite{VareaEtAl1999}, we set the initial values of $u$ and $v$ to random values between $-0.5$ and $0.5$ in a thin strip around the ``equator'' of each surface and $u=v=0$ elsewhere.  Values for all parameters were also motivated from the values used in~\cite{VareaEtAl1999} and are listed in Table \ref{tbl:turing}.   A time-step of $\dt=0.01$ was used in all the simulations and the numerical solutions were computed until steady-state was reached.  The resulting stripe and spot patterns in Figure \ref{fig:turing_spots} are qualitatively similar to those obtained from other numerical methods~\cite{Calhoun:2009,BergdorfEtAl2010,RuuthMerriman2008,BertalmioEtAl2001,MacDondaldRuuth2009,Turk:1991}.

\begin{table}[tbh]
\centering
\begin{tabular}{|c||cccccc|}
\hline
Surface/Pattern & $\delta_v$ & $\alpha$ & $\beta$ & $\gamma$ & $\tau_1$ & $\tau_2$ \\
\hline
\hline
Red blood cell/spots & $4.5\cdot 10^{-3}$ & 0.899 & -0.91 & -0.899 & 0.02 & 0.2 \\
Red blood cell/stripes & $2.1\cdot 10^{-3}$ & 0.899 & -0.91 & -0.899 & 3.5 & 0 \\
Bumpy sphere/spots & $4.5\cdot 10^{-3}$ & 0.899 & -0.91 & -0.899 & 0.02 & 0.2 \\
Bumpy sphere/stripes & $2.1\cdot 10^{-3}$ & 0.899 & -0.91 & -0.899 & 3.5 & 0 \\
Dupin's cyclide/spots & $4.5\cdot 10^{-2}$ & 0.899 & -0.91 & -0.899 & 0.02 & 0.2 \\
Dupin's cyclide/stripes & $1.89\cdot 10^{-2}$ & 0.899 & -0.91 & -0.899 & 3.5 & 0 \\
Bretzel2/spots & $2.1\cdot 10^{-3}$ & 0.899 & -0.91 & -0.899 & 0.02 & 0.2 \\
Bretzel2/stripes & $8.87\cdot 10^{-4}$ & 0.899 & -0.91 & -0.899 & 3.5 & 0 \\
\hline
\end{tabular}
\caption{Values of the parameters of \eqref{eq:turing} used in the numerical experiments shown in Figure \ref{fig:turing_spots}.\label{tbl:turing}  In all cases $\delta_u=0.516\delta_v$.}
\end{table}

\subsection{Spiral waves in excitable media}
\begin{figure}[h!]
\centering
\begin{tabular}{c@{\hspace{0.5in}}c}
\includegraphics[width=0.25 \textwidth]{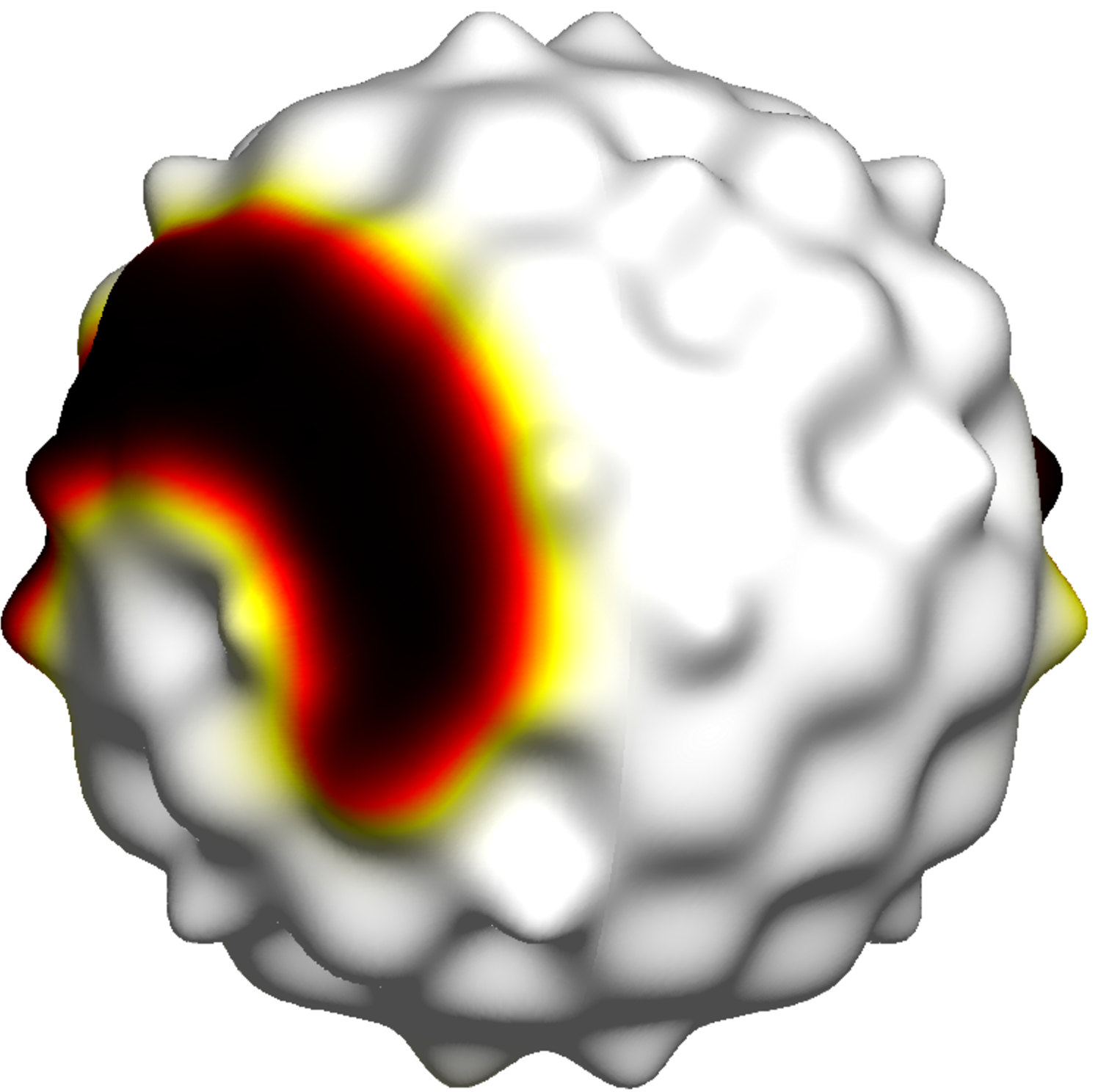}  & \includegraphics[width=0.27\textwidth]{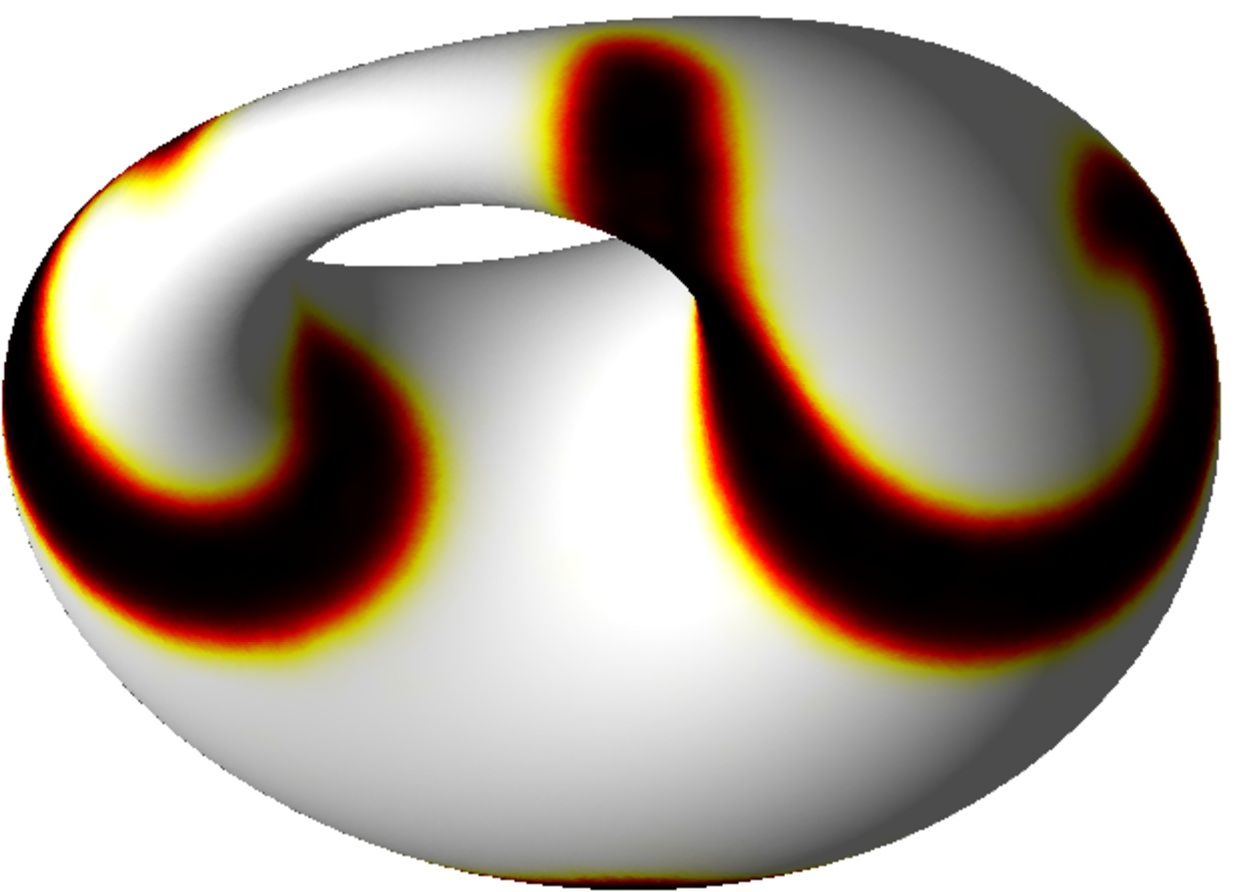}\\
\includegraphics[width=0.25\textwidth]{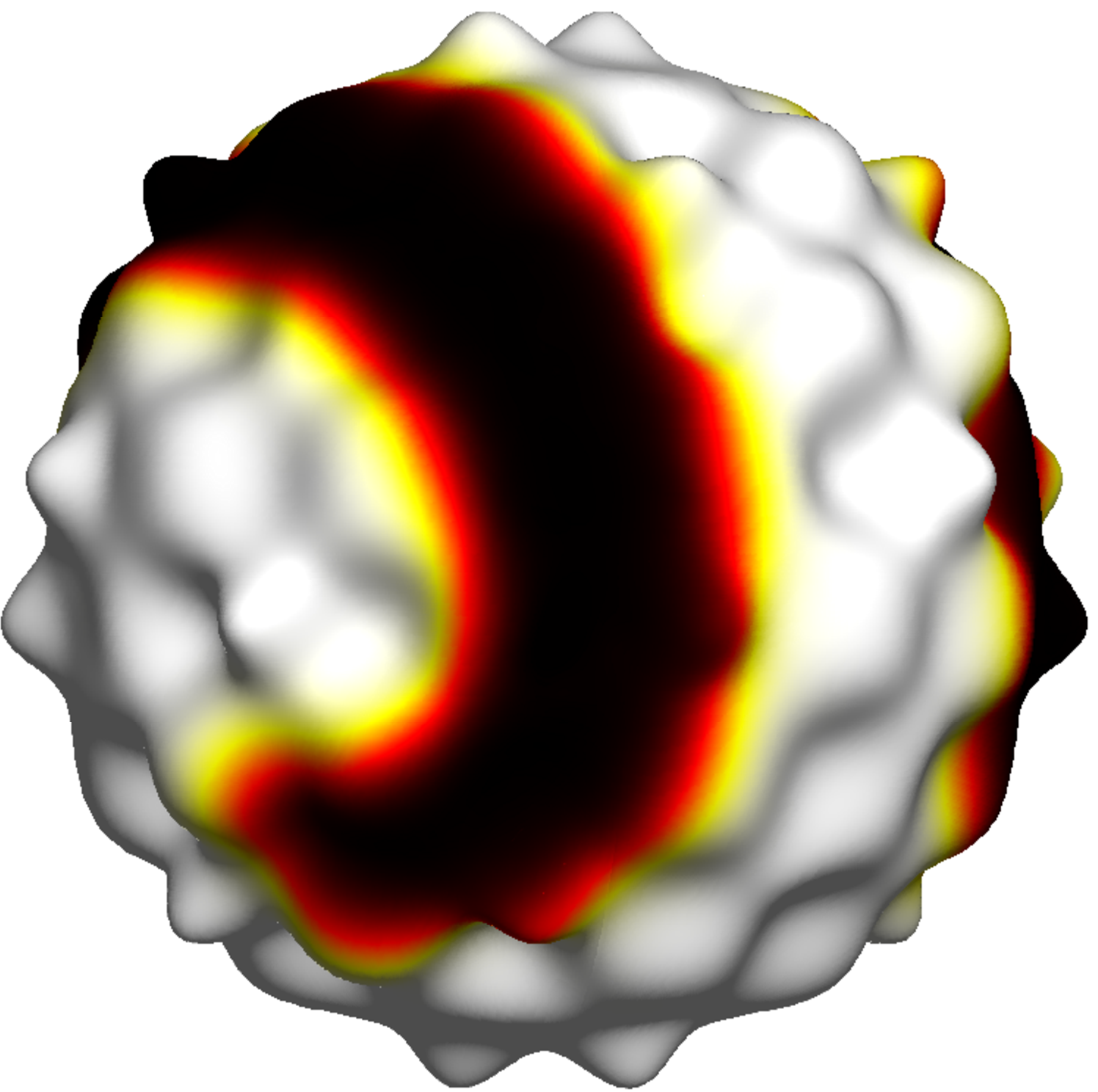}  & \includegraphics[width=0.27\textwidth]{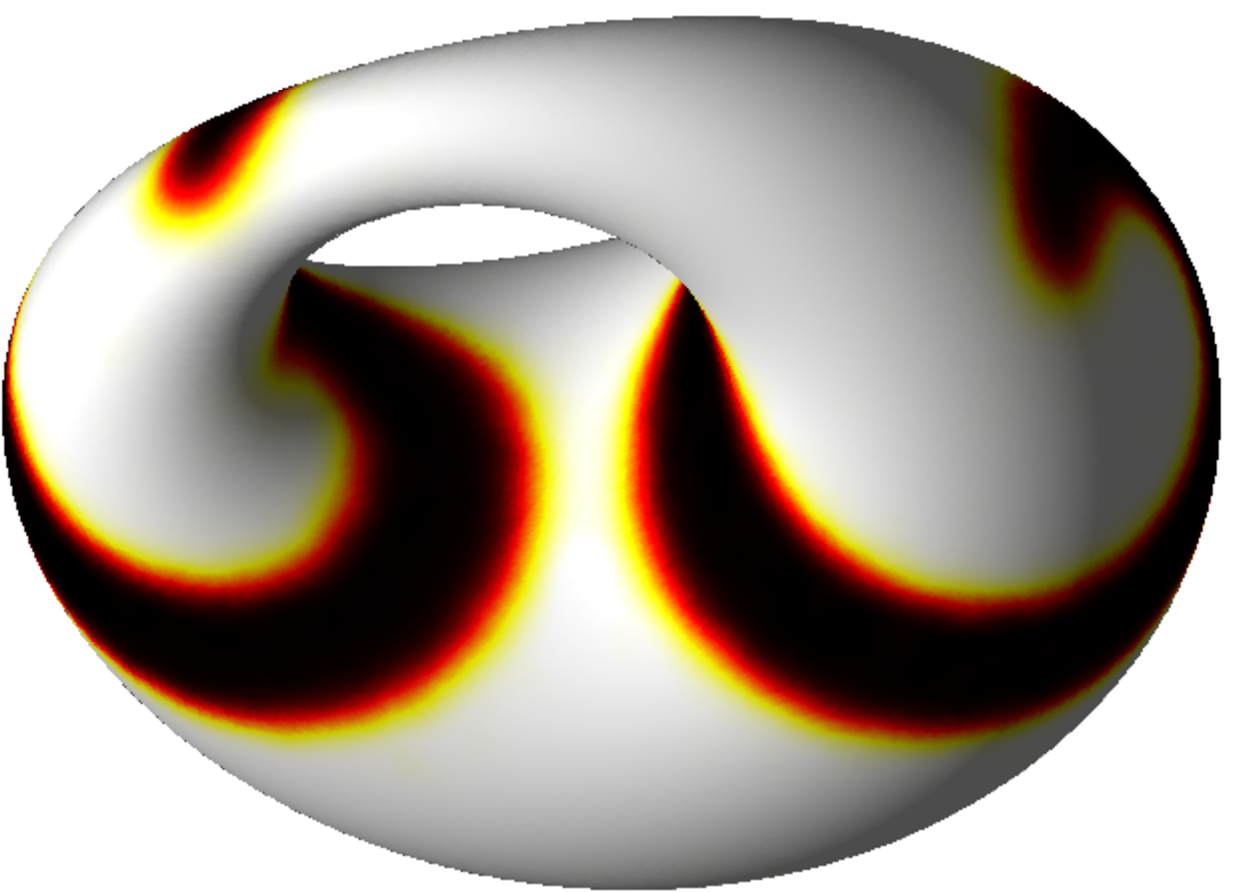}\\
\includegraphics[width=0.25\textwidth]{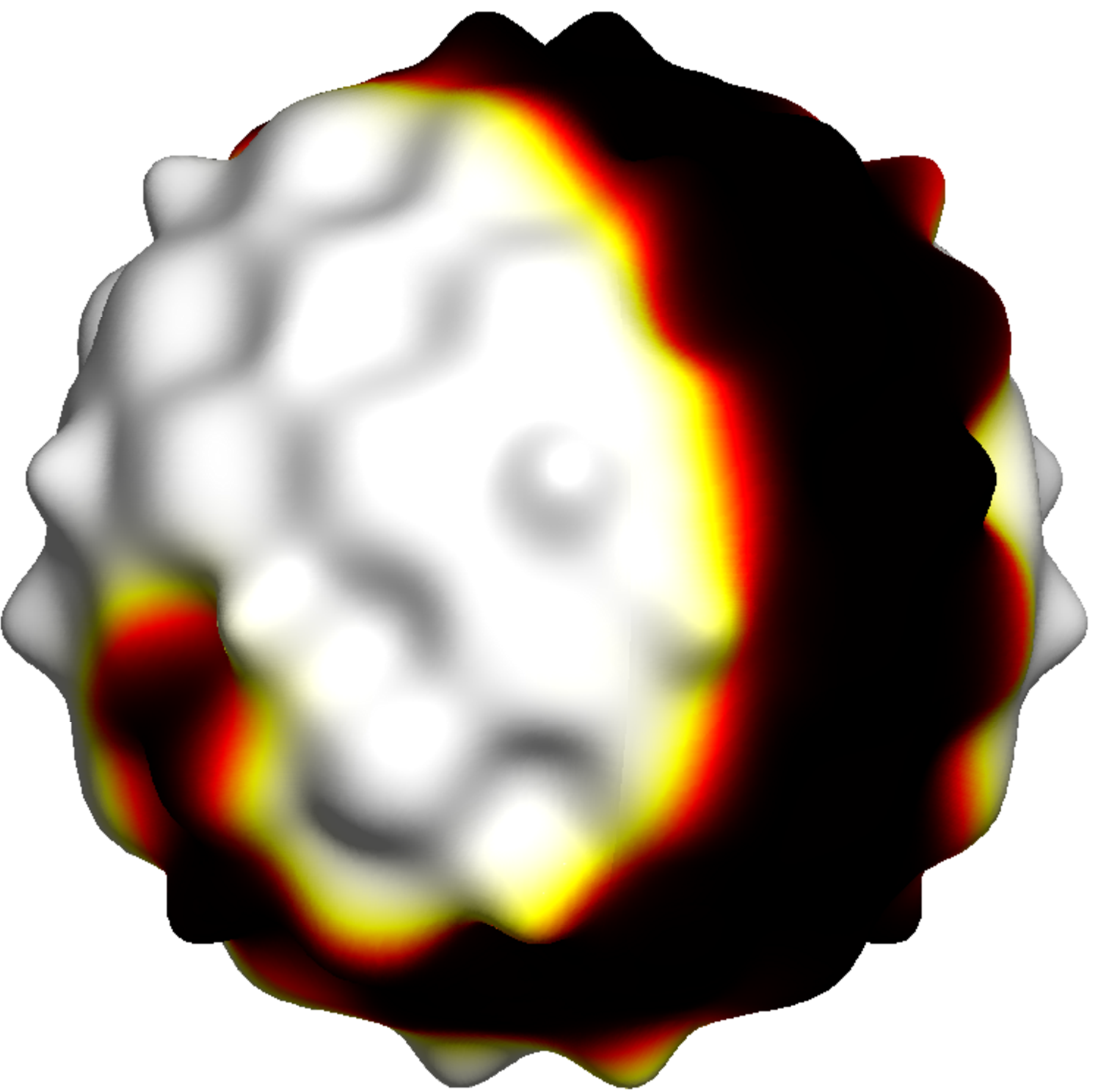}  & \includegraphics[width=0.27\textwidth]{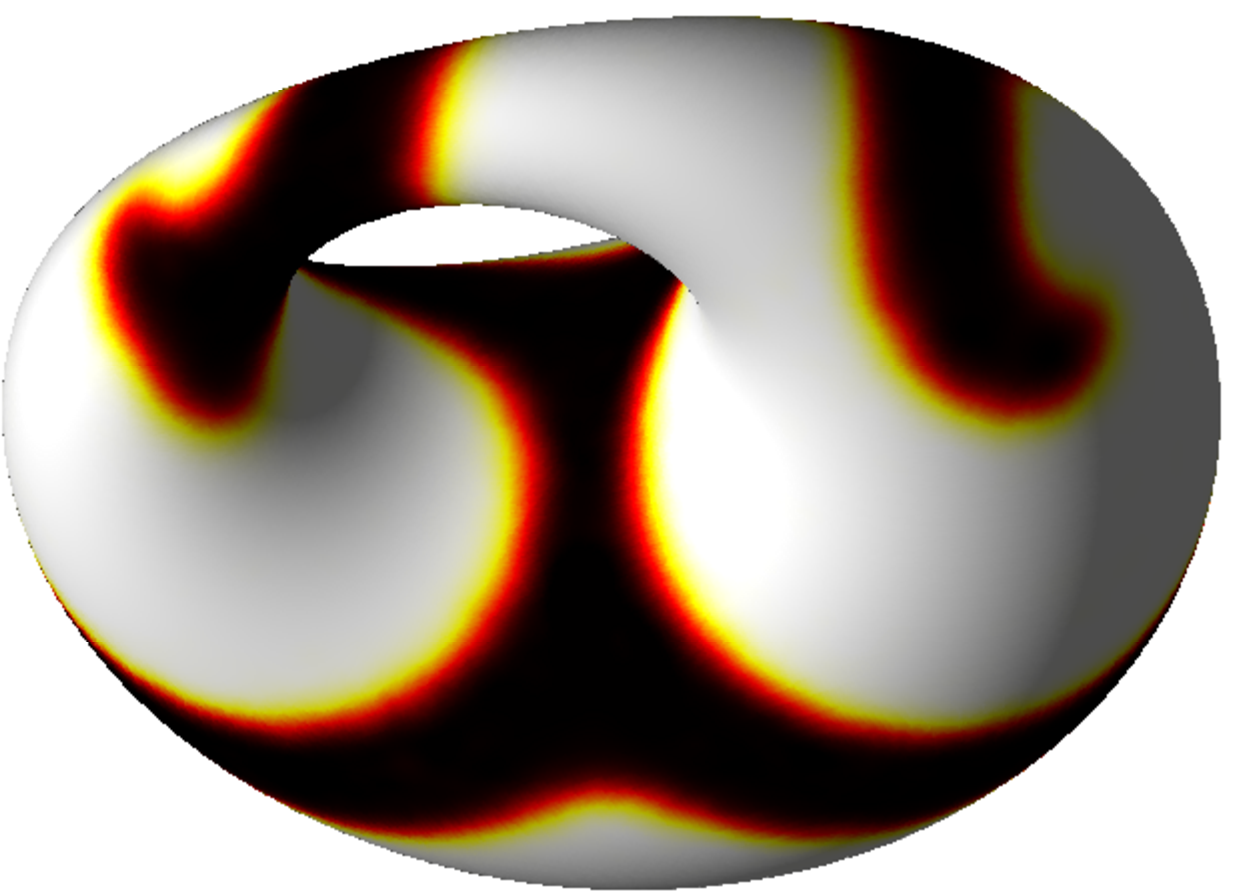}\\
\includegraphics[width=0.25\textwidth]{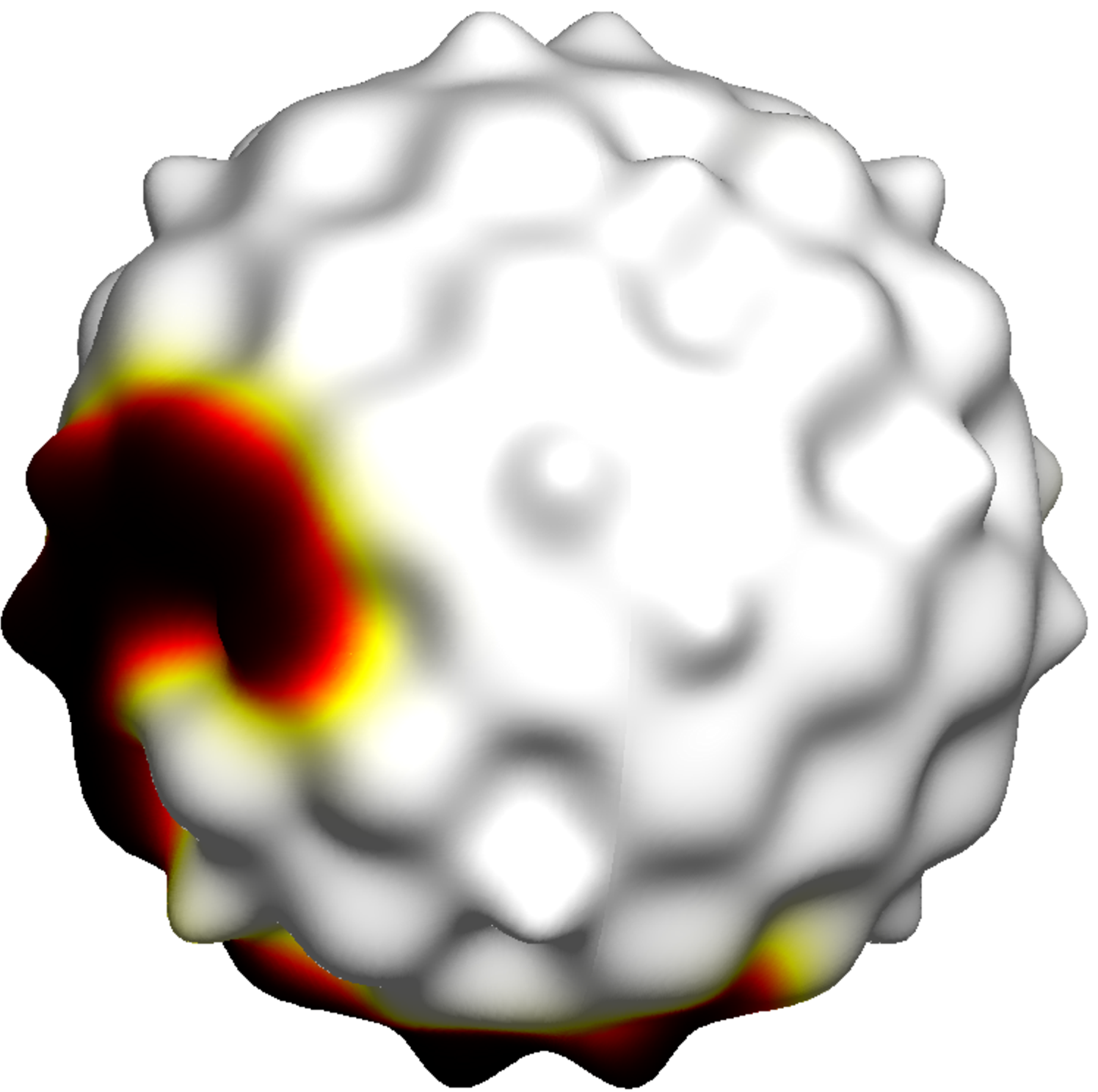}  & \includegraphics[width=0.27\textwidth]{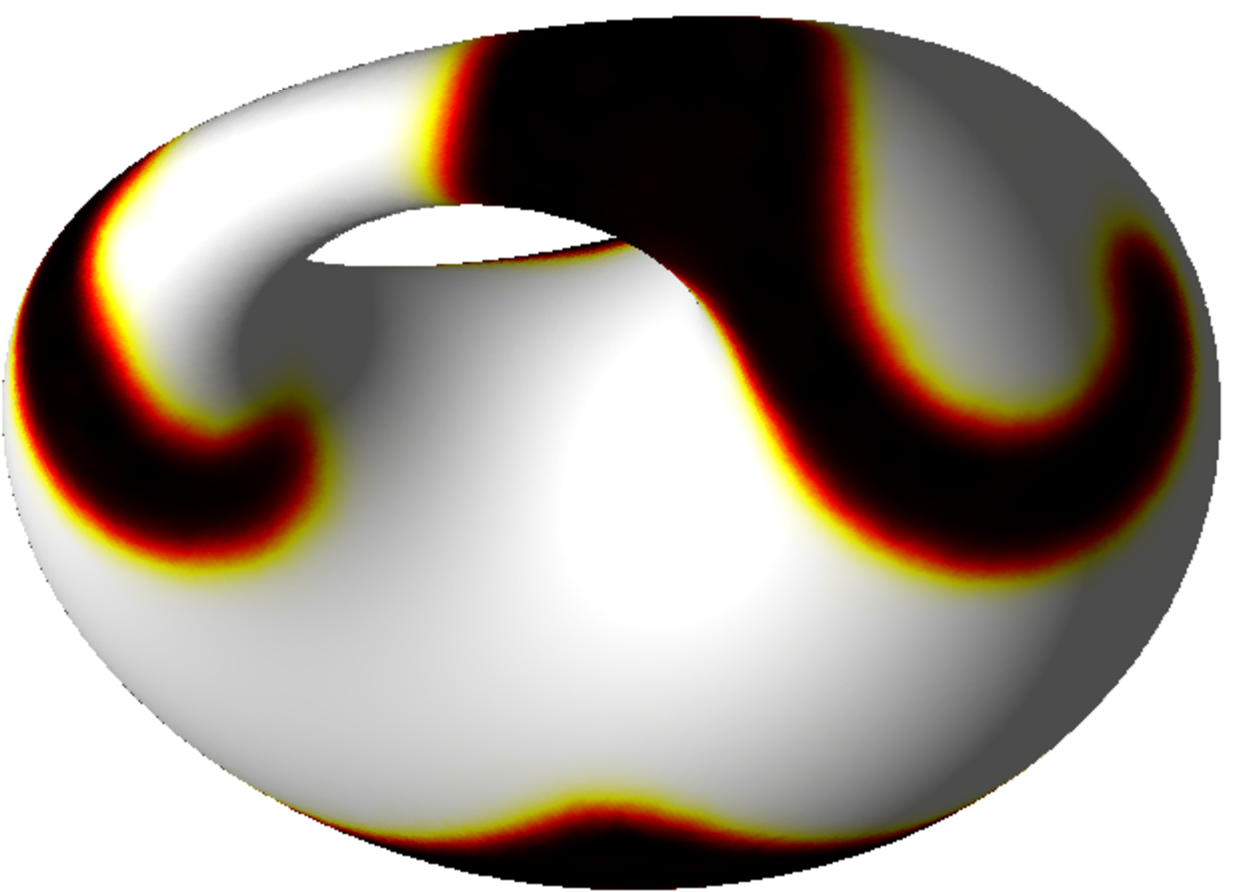}
\end{tabular}
\caption{Snap shots of spiral wave patterns computed from the Fitzhugh-Nagumo model \eqref{eq:barkley} at different times.  The left column shows the excitation variable $u$ on the bumpy sphere at times $t=42.24, 42.80, 43.76$ and $44.8$.  The right column shows $u$ on Dupin's cyclide at times $t=42.76, 43.48, 44.40$ and $45.24$.  The node sets used in the simulations are shown in Figure \ref{fig:ex_surfaces} (c) and (e), respectively, and the time-step was set to $\dt=0.02$.  The colors in the pseudocolor plots change linearly from white ($u=0$) to black ($u=1$).\label{fig:spiral_waves}.}
\end{figure}

Spiral waves can be observed in many excitable chemical, biological, and physical media~\cite{Tyson1988327,keener1998mathematical,EpsteinPojman1998}.  Important examples include Belousov-Zhabotinsky chemical reactions and electrical activity in the membranes of organisms.    While numerical methods have been developed for these models in planar domains (see~\cite{Barkley1991,Barkley:2008} and the corresponding software EZ-Spiral), there has been growing interest in studying these models on non-planar surfaces since most physically relevant problems occur on curved surfaces and curvatrue can effect the wave processes~\cite{DavydovEtAl2000,Grindrod08041991,ManzEtAl2003}.   To illustrate how our kernel method may be applied to these models, we focus on a simple two-variable reaction-diffusion model used in~\cite{Barkley1991}:
\begin{align}
\begin{split}
\frac{\partial u}{\partial t} =& \delta_u \laps u + \underbrace{\frac{1}{\alpha}u\lp 1-u \rp \lp u - \frac{v+b}{a}\rp}_{\ds f_u(u,v)},\\
\frac{\partial v}{\partial t} =& \delta_v \laps v + \underbrace{u-v}_{\ds f_v(u,v)},
\end{split}
\label{eq:barkley}
\end{align}
where $u$ and $v$ can be viewed as some chemical concentrations or as membrane potential and current.  The parameters $a$, $b$, and $\alpha$ govern the reaction kinetics and $\delta_u$ and $\delta_v$ are the diffusivities of the $u$ and $v$ species respectively.  The parameter $\alpha$ is chosen as $\alpha << 1$ so that the $u$ field takes on the values $u=0$ or $u=1$ almost everywhere, with a thin interface (or reaction zone) separating these two regions.  The system \eqref{eq:barkley} is of Fitzhugh-Nagumo type~\cite{FitzHugh1961} and provides a simple model for dynamics of many excitable media.   These types of systems have also been studied on the surface of the sphere~\cite{GomatamAmdjadi,Yagisita1998126}.

Figure \ref{fig:spiral_waves} displays snap shots from the numerical solution of \eqref{eq:barkley} on the bumpy sphere and Dupin's cyclide at different times of the simulation.  For all of these experiments we set the initial conditions to
\begin{align*}
u(0,\vx) &= \frac12\left[1 + \tanh(2x + y)\right], \\
v(0,\vx) &= \frac12\left[1 - \tanh(3z)\right],
\end{align*}
and $\dt = 0.02$.  Values for the parameters that remain fixed for the two surfaces are as follows:  $a = 0.75$, $b=0.02$, $\alpha=0.02$, and $\delta_v=0$.  The values of $\delta_u$ vary depending on the surface as follows:  $\delta_u = 1.5(2\pi/50)^2$ for the bumpy sphere and $\delta_u = 2.5(2\pi/50)^2$.   These values are similar to those used in one of the experiments from~\cite{Barkley:2008} on a 2-D planar domain.  The two quasi-periodic, counter-rotating spiral waves seen on both surfaces in Figure \ref{fig:spiral_waves} are qualitatively similar to those observed on the sphere in~\cite{GomatamAmdjadi}.  These spiral waves remain intact but meander around the surfaces throughout the simulation time.

\section{Concluding remarks} \label{sec:concluding_remarks}

In this paper, we have introduced a new kernel method based on collocation with RBFs for constructing discrete approximations to differential operators on surfaces, with a focus on surfaces of dimension 2 embedded in $\R^3$.  The method is different than other methods for approximating surface differential operators in that it only requires ``scattered'' nodes on the surface and the corresponding normal vectors, it does not require expanding into the $\R^3$ embedding space, and it can give high-orders of accuracy.  We have studied the approximation power of the method and have provided error estimates for target functions of various smoothness.  We have used the method to construct discrete approximations to the surface Laplacian and combined it in a method-of-lines approach to numerically solve diffusion and reaction-diffusion equations on surfaces.  We presented numerical results for two forced diffusion equations, illustrating the convergence of the method for kernels with finite and infinite smoothness.  The numerical results for the finitely smooth kernels indicate that the convergence rates are faster than our theory predicts, which may be related to some form of super-convergence common to spline methods.  Finally, we have illustrated the flexibility of our method by successfully applying it to two important problems in biology and chemistry.

The primary shortcoming of the method is the $\bO(N^2)$ computational cost for applying these operators.  In a follow up study, we intend to address this issue by using a local RBF finite-difference (RBF-FD) method for constructing the approximate surface differential operators.  This method has been used successfully in~\cite{FornbergLehto2011,FlyerLehtoBlaiseWrightStCyr2012} to overcome the computational cost of the global method~\cite{FlyerWright07,FlyerWright09} for the shallow water wave equations on the sphere and has recently been adapted to multi-GPU architectures in~\cite{BolligFlyerErlebacher2012}.  We expect the cost of constructing and applying these discrete operators based on RBF-FD to be reduced to $\bO(N)$, with a minor decrease in the order of accuracy.  These enhancements may make the method competitive for solving diffusion and reaction-diffusion equations on evolving surfaces, which is also important in many biological applications.


\appendix

\section{Convergence Results}\label{proofappendix}

In this section we present the convergence results for our discrete differential operators. The results presented here are for the special case when $\M\subset \R^3$ is a two dimensional surface, although similar results hold in higher dimensions. The arguments will depend heavily on error estimates for kernel interpolants on smooth, embedded submanifolds of $\R^d$ given recently in \cite{FuselierWright2011}. We will frequently reference results from that paper throughout this section. Sobolev error estimates for the surface gradient and divergence operators will immediately follow the results in \cite{FuselierWright2011}. However, the estimates for the discrete surface Laplacian will require more work. 

\begin{proposition}\label{prop:discretegraddivallkernels}
Let $1\leq q \leq \infty$, $\phi$ be a positive definite kernel satisfying \eqref{eq:fastdecay} with $\tau > 3/2 + 1$, and define $s = \tau - 1/2$. Then there is a constant $h_{\M}$ depending only on $\M$ such that if a finite node set $X\subset\M$ satisfies $h\leq h_\M$, for all $f\in \mathcal{N}_\phi(\M)$ and $\mathbf{f}\in (\mathcal{N}_\phi(\M))^3$ we have
\begin{eqnarray*}
\|G_\M f-\nabla_{\M} f\|_{\mathbf{L}_q(\M)}&\leq &C  h^{s-1-2(1/2-1/q)_+}\|f\|_{\mathcal{N}_\phi(\M)},\\
\|D_{\M}\mathbf{f}-\nabla_{\M}\cdot \mathbf{f}\|_{L_q(\M)}&\leq &C  h^{s-1-2(1/2-1/q)_+}\|\mathbf{f}\|_{\mathcal{N}_\phi(\M)},
\end{eqnarray*}
where $(x)_+ = x$ if $x\geq 0$ and is zero otherwise.
\end{proposition}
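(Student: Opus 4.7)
The plan is to exploit the fact that, by construction, $G_\M f - \nabla_\M f = \nabla_\M(I_\phi f - f)$ and $D_\M\mathbf{f} - \nabla_\M\cdot\mathbf{f} = \nabla_\M\cdot(I_\Phi\mathbf{f} - \mathbf{f})$, so both error quantities reduce to first-order surface derivatives of the pointwise kernel interpolation error. Hence the entire proposition boils down to a single tool: Sobolev-type error estimates for the interpolant $I_\phi f$ on $\M$, which are already available from \cite{FuselierWright2011}.

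First I would record that under the hypothesis \eqref{eq:fastdecay} with $\tau>3/2+1$ and $s=\tau-1/2$, the native space satisfies $\mathcal{N}_\phi(\M)\subseteq H^s(\M)$, and the results of \cite{FuselierWright2011} (the native-space interpolation estimates on smooth embedded submanifolds) give a bound of the form
\begin{equation*}
\|I_\phi f - f\|_{H^\mu(\M)} \leq C\, h^{s-\mu}\|f\|_{\mathcal{N}_\phi(\M)}, \qquad 0 \le \mu \le s,
\end{equation*}
valid for all $h\le h_\M$ where $h_\M$ depends only on $\M$. The surface gradient is a bounded operator $H^{\mu+1}(\M)\to \mathbf{H}^\mu(\M)$, so applying it costs one derivative: $\|\nabla_\M(I_\phi f - f)\|_{\mathbf{H}^\mu(\M)} \le C\,h^{s-\mu-1}\|f\|_{\mathcal{N}_\phi(\M)}$.

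Next I would convert the $\mathbf{H}^\mu(\M)$ bound into the desired $\mathbf{L}_q(\M)$ bound via Sobolev embedding on the two-dimensional manifold $\M$. Since $\dim\M=2$, the embedding $H^\mu(\M)\hookrightarrow L_q(\M)$ holds provided $\mu \ge 2(1/2-1/q)_+$ (with obvious modification at the endpoint $q=\infty$, using slightly more smoothness or interpolation with $H^{1+\epsilon}$; the $(\cdot)_+$ notation in the statement already signals this is the critical quantity). Choosing $\mu = 2(1/2-1/q)_+$ and inserting into the previous display yields
\begin{equation*}
\|G_\M f - \nabla_\M f\|_{\mathbf{L}_q(\M)} \le C\, \|\nabla_\M(I_\phi f - f)\|_{\mathbf{H}^{2(1/2-1/q)_+}(\M)} \le C\, h^{s-1-2(1/2-1/q)_+}\|f\|_{\mathcal{N}_\phi(\M)},
\end{equation*}
which is exactly the gradient estimate. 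The divergence estimate follows by the same argument componentwise, since $I_\Phi\mathbf{f} = (I_\phi f^x,I_\phi f^y,I_\phi f^z)$ and $\nabla_\M\cdot:\mathbf{H}^{\mu+1}(\M)\to H^\mu(\M)$ is bounded; summing the three Sobolev errors and using $\|\mathbf{f}\|_{\mathcal{N}_\phi(\M)}^2 = \sum \|f^\bullet\|_{\mathcal{N}_\phi(\M)}^2$ gives the second bound.

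The main obstacles are bookkeeping rather than conceptual. The first is making sure the Sobolev estimate from \cite{FuselierWright2011} is invoked at the right regularity index $\mu = 1 + 2(1/2-1/q)_+$ and that this $\mu$ satisfies the hypotheses of the cited theorem (in particular $\mu \le s$, which is where the restriction $\tau > 3/2+1$ enters, ensuring at least one derivative beyond the continuity threshold is available). The second is treating the $q=\infty$ endpoint cleanly, since the borderline Sobolev embedding $H^1(\M)\hookrightarrow L_\infty(\M)$ fails in two dimensions; this is handled by embedding into $H^{1+\epsilon}$ (absorbing the $\epsilon$ into the constant using the quasi-uniformity bound implicit in the native-space estimate, or equivalently by invoking the stronger zero-lemma version of the interpolation estimate that already appears in \cite{FuselierWright2011} for the $L_\infty$ case). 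Everything else is a direct chain of the two ingredients: boundedness of $\nabla_\M$ between Sobolev scales on $\M$, and native-space interpolation error.
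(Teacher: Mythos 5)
Your core reduction is the same one the paper uses: $G_\M f-\nabla_\M f=\nabla_\M(I_\phi f-f)$, so everything follows from interpolation error estimates on $\M$ from \cite{FuselierWright2011} measured in a first-order Sobolev norm. The difference is that the paper bounds $\|\nabla_\M(I_\phi f-f)\|_{\mathbf{L}_q(\M)}\le C\|I_\phi f-f\|_{W^1_q(\M)}$ and then quotes Theorem 4.6 of \cite{FuselierWright2011}, which is stated directly in the $L_q$-based Sobolev scale and already contains the loss $h^{-2(1/2-1/q)_+}$ for \emph{all} $1\le q\le\infty$; you instead quote only the $L_2$-based estimate $\|I_\phi f-f\|_{H^\mu(\M)}\le Ch^{s-\mu}\|f\|_{\mathcal{N}_\phi(\M)}$ and recover $L_q$ by Sobolev embedding on the two-dimensional surface. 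For $q<\infty$ this works (the critical embedding $H^{1-2/q}(\M)\hookrightarrow L_q(\M)$ holds), and it reproduces the stated rates, so it is a legitimate, slightly more hands-on version of the same argument.

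The one genuine flaw is your primary treatment of the endpoint $q=\infty$. Embedding into $H^{1+\epsilon}(\M)$ costs a factor $h^{-\epsilon}$, giving $h^{s-2-\epsilon}$, and this \emph{cannot} be ``absorbed into the constant'': $h^{-\epsilon}\to\infty$ as $h\to 0$, and quasi-uniformity is irrelevant here (the proposition assumes none, and the native-space estimate does not involve $\rho$). So that route only proves a strictly weaker statement with an arbitrarily small loss in the exponent. Your fallback suggestion is the correct one and is exactly what the paper does: invoke the $W^\mu_q$ (equivalently, zeros-lemma type) estimate of \cite{FuselierWright2011}, which is proved for $q=\infty$ with the clean rate $h^{s-\mu-2(1/2-1/q)_+}$ and no epsilon loss; see also Proposition \ref{zeroslemma} in the appendix. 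With that substitution at the endpoint, your argument is complete, and the divergence bound follows componentwise as you say.
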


\begin{proof}
We will focus on the discrete gradient operator. We have 
\[\|G_\M f-\nabla_{\M} f\|_{\mathbf{L}_q(\M)} =\|\sgrad I_{\phi}f-\nabla_{\M} f\|_{\mathbf{L}_q(\M)} \leq C\|I_{\phi}f - f\|_{W^{1}_q(\M)},\]
where $W^1_q(\M)$ is the $L_q$ Sobolev space of order 1 (see Section 2, \cite{FuselierWright2011}). The assumptions on $\tau$ allow us to apply Theorem 4.6 in \cite{FuselierWright2011}, and the results follow. The error bounds in Theorem 4.6 in \cite{FuselierWright2011} carry over to the vector-valued case, thus bounds for the divergence operator error are obtained in a similar manner.
\end{proof}

When $\phi$ satisfies \eqref{eq:algdecay}, i.e. $\phi$ has finite smoothness, the estimates come in two types: the first concerns targets that may be too rough to be within the native space, and the second applies to functions with additional smoothness. This additional smoothness is measured with the inverse of the integral operator\footnote{With this integral operator comes the pseudodifferential operators $T^{-r}$, $r>0$. A function $f$ is in the native space if and only if $T^{-1/2}f \in L_2(\M)$ \cite[Proposition 4.9]{FuselierWright2011}, so we expect functions such that $T^{-1}f\in L_2(\M)$ to be twice as smooth. }
\[Tf(x) := \int_{\M}\phi(y,x)f(y)d\mu(y).\]
We denote a vector version of this operator by $\mathbf{T}$, which simply applies $T$ to each component of a $3$-dimensional vector field. Similar to the proof above, the result below follows from Theorem 4.12 and Corollary 4.10 in \cite{FuselierWright2011}.

\begin{proposition}\label{prop:discretegraddiv}
Let $1\leq q \leq \infty$, and let $\phi$ be a positive definite kernel satisfying (\ref{eq:algdecay}) with $\tau > 3/2 + 1$, and define $s = \tau - 1/2$. Then there is a constant $h_{\M}$ depending only on $\M$ such that if a finite node set $X\subset\M$ satisfies $h\leq h_\M$, we have the following:
\begin{enumerate}
\item  \textbf{\emph{Rough target functions}}. Let $\beta$ be such that $s \geq \beta > 2$. Then for all $f \in H^\beta(\M)$ and $\mathbf{f}\in\mathbf{H}^\beta(\M)$ we have
\begin{eqnarray*}
\|G_\M f-\nabla_{\M} f\|_{\mathbf{L}_q(\M)}&\leq &C  h^{\beta-1-2(1/2-1/q)_+}\rho^{s -\beta}\|f\|_{H^{\beta}(\M)},\\
\|D_{\M}\mathbf{f}-\nabla_{\M}\cdot \mathbf{f}\|_{L_q(\M)}&\leq &C  h^{\beta-1-2(1/2-1/q)_+}\rho^{s - \beta}\|\mathbf{f}\|_{\mathbf{H}^{\beta}(\M)}. 
\end{eqnarray*}
\item  \textbf{\emph{Smooth target functions}}. For all $f\in L_2(\M)$ such that $T^{-1}f\in L_2(\M)$ and $\mathbf{f}\in \mathbf{L}_2(\M)$ such that $T^{-1}\mathbf{f}\in \mathbf{L}_2(\M)$, we have
\begin{eqnarray*}
\|G_\M f-\nabla_{\M} f\|_{\mathbf{L}_q(\M)}&\leq &C  h^{2s-1-2(1/2 - 1/q)_+}\|T^{-1}f\|_{L_2(\M)},\\
\|D_{\M}\mathbf{f}-\nabla_{\M}\cdot \mathbf{f}\|_{\mathbf{L}_q(\M)}&\leq &C  h^{2s-1-2(1/2 - 1/q)_+}\|\mathbf{T}^{-1}\mathbf{f}\|_{\vL_2(\M)}. 
\end{eqnarray*}
\end{enumerate}
\end{proposition}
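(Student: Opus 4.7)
The proof strategy closely follows that of Proposition \ref{prop:discretegraddivallkernels}: reduce the error in the discrete operators to a Sobolev-norm error for the kernel interpolant, and then invoke the appropriate approximation theorem from \cite{FuselierWright2011}. Specifically, since $\nabla_\M$ is a bounded linear operator from $W^1_q(\M)$ into $\mathbf{L}_q(\M)$, we have
\[\|G_\M f - \nabla_\M f\|_{\mathbf{L}_q(\M)} = \|\nabla_\M(I_\phi f - f)\|_{\mathbf{L}_q(\M)} \leq C\|I_\phi f - f\|_{W^1_q(\M)},\]
and an analogous componentwise bound holds for the divergence error in terms of $\|I_\Phi \vf - \vf\|_{\mathbf{W}^1_q(\M)}$.

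For Part 1, I would then apply Theorem 4.12 of \cite{FuselierWright2011}, which provides the Sobolev-norm estimate
\[\|I_\phi f - f\|_{W^1_q(\M)} \leq C\, h^{\beta-1-2(1/2-1/q)_+}\rho^{s-\beta}\|f\|_{H^\beta(\M)}\]
for $s \geq \beta > 2$; the threshold $\beta > 2$ ensures first-order derivatives are controlled on the 2-dimensional manifold via Sobolev embedding. The mesh-ratio factor $\rho^{s-\beta}$ appears because the target need not lie in the native space $H^s(\M)$, and the loss exponent $2(1/2-1/q)_+$ records the Sobolev embedding cost incurred when upgrading the natural $L_2(\M)$ estimate to an $L_q(\M)$ one.

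For Part 2, I would invoke Corollary 4.10 of \cite{FuselierWright2011}, which, under the doubled-smoothness hypothesis $T^{-1}f \in L_2(\M)$, yields the improved rate
\[\|I_\phi f - f\|_{W^1_q(\M)} \leq C\, h^{2s-1-2(1/2-1/q)_+}\|T^{-1}f\|_{L_2(\M)}.\]
Combining this with the reduction above produces the Part 2 estimates, and the vector-valued divergence bounds follow by applying the scalar estimates to each component of $\vf$ and summing.

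The main obstacle here is not conceptually deep, since the heavy analytical lifting has already been carried out in \cite{FuselierWright2011}. The remaining work is careful bookkeeping: verifying that $\tau > 3/2 + 1$ (equivalently $s > 2$) is precisely the regularity required for $\nabla_\M$ to act on the interpolant as a bounded map into $\mathbf{L}_q(\M)$ for all $q \in [1,\infty]$, confirming that the hypotheses of Theorem 4.12 and Corollary 4.10 are met in the current setting, and matching the resulting exponents against the $L_q$-loss factor $2(1/2-1/q)_+$ as stated in the proposition.
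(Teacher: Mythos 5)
Your proposal is correct and follows essentially the same route as the paper: the authors likewise reduce the discrete gradient/divergence error to the interpolation error $\|I_\phi f - f\|_{W^1_q(\M)}$ as in Proposition \ref{prop:discretegraddivallkernels}, and then cite Theorem 4.12 of \cite{FuselierWright2011} for the rough-target case and Corollary 4.10 for the smooth-target case, handling vector fields componentwise.
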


Obtaining convergence rates for the discrete Laplace operator is not as straightforward, and will require extra tools. The first is the ``zeros lemma'' from Narcowich, Ward and Wendland \cite{NarcWardWendland:2005ScatteredZeros}. We will use the manifold version stated in \cite[Lemma 4.5]{FuselierWright2011}, with parameters adapted to our situation.

\begin{proposition}\label{zeroslemma}
Let $1\leq q\leq \infty$, $t\in \R$ with $t > 1$. Let $\mu\in \mathbb{N}$ satisfy $0\leq\mu \leq \lceil t - 2(1/2 - 1/q)_{+}\rceil -1$. Also, let $X\subset\M$ be a discrete set with mesh norm $h_{X,\M}$.  Then there is a constant depending only on $\M$ such that if $h_{X,\M}\leq C_{\M}$ and if $u\in H^{t}(\M)$ satisfies $u|_{X}=0$, then
\[|u|_{W_{q}^{\mu}(\M)}\leq Ch^{t-\mu-2(1/2-1/q)_{+}}|u|_{H^{t}(\M)}.\]
\end{proposition}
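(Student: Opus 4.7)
The plan is to reduce the manifold statement to a Euclidean zeros lemma on $\R^2$ by means of a finite smooth atlas, and then prove the Euclidean statement by a localized Bramble--Hilbert approximation on balls scaled to the mesh norm. Since $\M$ is a smooth compact 2-surface, I would fix a finite atlas $\{(U_\alpha,\psi_\alpha)\}$ of bi-Lipschitz charts $\psi_\alpha:U_\alpha\to V_\alpha\subset \R^2$ with uniformly bounded derivatives, together with a subordinate partition of unity $\{\chi_\alpha\}$. Sobolev norms on $\M$ and on the chart images are then equivalent up to constants depending only on $\M$, and the geodesic mesh norm of $X$ controls the Euclidean mesh norm of $\psi_\alpha(X\cap U_\alpha)$ inside each chart. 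This reduces the proposition to the following Euclidean statement: if $v\in H^t(V)$ for a bounded Lipschitz domain $V\subset \R^2$ with $v$ vanishing on a set $Y\subset V$ of Euclidean mesh norm at most $ch$, and if $\mu$ satisfies the hypothesis, then $|v|_{W^\mu_q(V)}\leq C h^{t-\mu-2(1/2-1/q)_+}|v|_{H^t(V)}$.

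For the Euclidean estimate I would cover $V$ by balls $\{B_i\}$ of radius $C_0 h$ chosen with bounded overlap so that each $B_i$ contains at least one point $y_i\in Y$; this is possible once $h$ is sufficiently small relative to the covering constants. On each $B_i$ let $P_i$ be an averaged Taylor polynomial of $v$ of degree $m=\lceil t-2(1/2-1/q)_+\rceil - 1$. The Bramble--Hilbert lemma then gives
\begin{equation*}
\|v-P_i\|_{W^\mu_q(B_i)} \le C h^{t-\mu-2(1/2-1/q)_+}\,|v|_{H^t(B_i^*)},
\end{equation*}
where $B_i^*$ is a slight enlargement of $B_i$, and the loss $2(1/2-1/q)_+$ comes from the $H^{\sigma}\hookrightarrow L_q$ embedding in two dimensions. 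Combining the vanishing condition $v(y_i)=0$ with a pointwise trace/embedding estimate on $B_i$ bounds $|P_i(y_i)|=|P_i(y_i)-v(y_i)|$ by $\|v-P_i\|_{W^\mu_q(B_i)}$ up to the appropriate power of $h$, and a polynomial inverse (Markov) inequality on $B_i$ controls $\|P_i\|_{W^\mu_q(B_i)}$ by $|P_i(y_i)|$ times a matching power of $h$. The two bounds combine to give $\|v\|_{W^\mu_q(B_i)}\le C h^{t-\mu-2(1/2-1/q)_+}|v|_{H^t(B_i^*)}$; raising to the $q$-th power and summing over the finite-overlap cover produces the global Euclidean bound, and the partition of unity transfers this back to $\M$.

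The step I expect to be most delicate is the careful bookkeeping of the scaling in the Sobolev embedding and the Markov inequality, since the exponent $t-\mu-2(1/2-1/q)_+$ is sharp and any slip in the dimensional accounting on balls of radius $h$ would corrupt the final power of $h$. The hypothesis $\mu\leq\lceil t-2(1/2-1/q)_+\rceil - 1$ is precisely what guarantees that the polynomial degree $m$ used in the Bramble--Hilbert step is large enough to produce a nonnegative power of $h$. A secondary technical issue is the case of non-integer $t$, where the averaged Taylor polynomial construction must be supplemented either by real interpolation between integer orders or by a fractional-order Bramble--Hilbert estimate in order to preserve the sharp exponent.
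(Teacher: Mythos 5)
You should first note that the paper does not prove this proposition at all: it is imported verbatim from the Narcowich--Ward--Wendland scattered-zeros estimate, in the manifold form given in \cite[Lemma 4.5]{FuselierWright2011}. Your outer layer (finite atlas, partition of unity, equivalence of geodesic and Euclidean fill distances in charts, reduction to a Euclidean zeros lemma on Lipschitz chart images) is indeed how that manifold version is obtained, so that part of the plan is sound.

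The genuine gap is in the Euclidean core. You propose to control the local averaged Taylor polynomial $P_i$ of degree $m=\lceil t-2(1/2-1/q)_+\rceil-1$ by its value at a \emph{single} zero $y_i\in B_i$, using a Markov-type inverse inequality. This cannot work once $m\geq 1$: Markov inequalities bound derivatives of a polynomial by its norm on the ball, never the norm on the ball by its value at one point. Concretely, $P(x)=(x-y_i)\cdot e$ vanishes at $y_i$ yet has $L_\infty(B_i)$-norm comparable to $h$, so no estimate of the form $\|P\|_{L_\infty(B_i)}\leq C\,h^{\alpha}|P(y_i)|$ holds for any $\alpha$; and since $t>1$ is unrestricted above, the relevant degree $m$ is generically $\geq 1$, so the chain ``Bramble--Hilbert $+$ one zero $+$ Markov'' collapses exactly at the step that produces the sharp power of $h$. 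The missing idea --- and the heart of the Narcowich--Ward--Wendland proof --- is a local \emph{norming set} (stable polynomial reproduction) argument: the local regions must be taken of diameter $c_m h$ with $c_m$ depending on the polynomial degree, so that the zeros of $u$ contained in each region are unisolvent and norming for polynomials of degree $m$; then $\|P_i\|_{L_\infty(B_i)}\leq C\max_{x_j\in X\cap B_i}|P_i(x_j)| = C\max_{x_j}|P_i(x_j)-v(x_j)|\leq C\|v-P_i\|_{L_\infty(B_i)}$, and your remaining scaling bookkeeping (Sobolev embedding loss $2(1/2-1/q)_+$ in dimension $2$, summation over a bounded-overlap cover) goes through. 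Your secondary concern about non-integer $t$ is real but standard (fractional Bramble--Hilbert or interpolation handles it); the single-point-per-ball step is the essential flaw, and without the norming-set mechanism the proposed proof does not establish the stated rate.
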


\noindent Proposition \ref{zeroslemma} also holds with full Sobolev norms, and a similar result holds for vector fields.

Since the operator $L_{\M}$ is a mixture of differential and interpolation operators, it will be beneficial to bound the norm of the interpolation operator and its associated error in several different Sobolev spaces. In particular, we will use the following.

\begin{lemma}\label{interpnorm_rough}
Let $\phi$ satisfy (\ref{eq:algdecay}) with $\tau > 3/2$, and define $s = \tau - 1/2$. Let $\beta$ be such that $s \geq \beta > 1$. Then there is a constant $h_{\M}$ depending only on $\M$ such that if a finite node set $X\subset\M$ satisfies $h\leq h_\M$ then for all $f\in H^{\beta}(\M)$ we have
\[\|I_{\phi}f - f\|_{H^{\beta}(\M)} \leq C \rho^{s - \beta}\|f\|_{H^{\beta}(\M)}\mbox{\hspace{.25in} and \hspace{.25in}} \|I_{\phi}f\|_{H^{\beta}(\M)} \leq C \rho^{s - \beta}\|f\|_{H^{\beta}(\M)}.\]
\end{lemma}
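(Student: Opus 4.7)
The plan is to apply the Narcowich--Ward--Wendland ``escape the native space'' strategy in the manifold setting. Because $\beta$ may be strictly less than $s$, the target $f$ need not lie in $\mathcal{N}_{\phi}(\M) = H^{s}(\M)$, so Theorem 4.6 of~\cite{FuselierWright2011} cannot be invoked on $f$ directly. The work-around is to replace $f$ by a smoother surrogate $\tilde f \in H^{s}(\M)$ that agrees with $f$ on $X$, whose $H^{s}$-norm is controlled by a negative power of the separation radius $q$, and whose $H^{\beta}$-norm is comparable to that of $f$. Since the interpolant depends only on data at $X$, we then have $I_{\phi}f = I_{\phi}\tilde f$, and the problem reduces to a classical native-space error estimate for $\tilde f$.

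Concretely, I would first invoke a manifold extension lemma --- the surface analog of the NWW bandlimited construction, obtained by localizing via coordinate charts and a partition of unity, as developed in~\cite{FuselierWright2011} --- to produce $\tilde f \in H^{s}(\M)$ satisfying
\[
\tilde f|_{X} = f|_{X}, \qquad \|\tilde f\|_{H^{s}(\M)} \leq C\, q^{\beta - s}\|f\|_{H^{\beta}(\M)}, \qquad \|\tilde f\|_{H^{\beta}(\M)} \leq C\|f\|_{H^{\beta}(\M)}.
\]
Applying Theorem 4.6 of~\cite{FuselierWright2011} at Sobolev order $\beta$ to the native-space function $\tilde f$, and using $I_{\phi}f = I_{\phi}\tilde f$, then gives
\[
\|I_{\phi}f - \tilde f\|_{H^{\beta}(\M)} = \|I_{\phi}\tilde f - \tilde f\|_{H^{\beta}(\M)} \leq C\, h^{s - \beta}\|\tilde f\|_{H^{s}(\M)} \leq C\, h^{s-\beta} q^{\beta - s}\|f\|_{H^{\beta}(\M)} = C\rho^{s-\beta}\|f\|_{H^{\beta}(\M)}.
\]

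The first stated bound now follows from the triangle inequality
\[
\|I_{\phi}f - f\|_{H^{\beta}(\M)} \leq \|I_{\phi}\tilde f - \tilde f\|_{H^{\beta}(\M)} + \|\tilde f - f\|_{H^{\beta}(\M)}
\]
together with the third property of $\tilde f$; since $\rho \geq 1$ and $s \geq \beta$, the factor $\rho^{s-\beta} \geq 1$ absorbs both contributions. The second stated bound is then an immediate consequence of one further triangle inequality, $\|I_{\phi}f\|_{H^{\beta}(\M)} \leq \|I_{\phi}f - f\|_{H^{\beta}(\M)} + \|f\|_{H^{\beta}(\M)}$. The main obstacle is the manifold extension lemma itself: the $\R^{d}$ version rests on a bandlimited convolution argument, but transplanting it to $\M$ requires working chart-by-chart and carefully controlling how Sobolev norms, the data set $X$, and the separation radius transform under the coordinate maps --- precisely the machinery already put in place in~\cite{FuselierWright2011}. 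Once that extension is in hand, the remainder of the proof is just the triangle-inequality bookkeeping sketched above.
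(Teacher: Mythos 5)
Your proposal is correct, and at bottom it follows the same route as the paper, with the key step unpacked rather than cited. The paper's proof is essentially two lines: it quotes the final estimate from the proof of Theorem 4.12 in \cite{FuselierWright2011}, namely $\|f - I_{\phi}f\|_{H^{\beta}(\M)} \leq C\rho^{s-\beta}\|f\|_{H^{\beta}(\M)}$, and then derives the second bound exactly as you do, by a triangle inequality together with $\rho \geq 1$. Your band-limited surrogate argument --- constructing $\tilde f \in H^{s}(\M)$ with $\tilde f|_{X}=f|_{X}$, $\|\tilde f\|_{H^{s}(\M)} \leq C q^{\beta-s}\|f\|_{H^{\beta}(\M)}$, $\|\tilde f\|_{H^{\beta}(\M)} \leq C\|f\|_{H^{\beta}(\M)}$, applying the native-space error bound to $\tilde f$, and using $I_{\phi}f = I_{\phi}\tilde f$ --- is precisely the Narcowich--Ward--Wendland ``escape the native space'' mechanism by which the quoted estimate is proved in \cite{FuselierWright2011}, so what you gain is a self-contained derivation (modulo the manifold version of the band-limited extension lemma, which you correctly identify as the real technical content and attribute to the machinery of that reference), while the paper gains brevity by citing the estimate directly. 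The triangle-inequality bookkeeping at the end of your argument, including absorbing the $\mathcal{O}(1)$ terms into $C\rho^{s-\beta}$ because $\rho \geq 1$ and $s \geq \beta$, matches the paper's handling verbatim, so there is no gap.
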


\begin{proof}
The last estimate in the proof of Theorem 4.12 in \cite{FuselierWright2011} is
\[ 
\|f - I_{\phi}f\|_{H^{\beta}(\M)} \leq C \rho^{s - \beta}\|f\|_{H^{\beta}(\M)}.
\]
Applying a triangle inequality and observing that $1\leq \rho$, we get
\[ \|I_{\phi}f\|_{H^{\beta}(\M)} \leq \|f - I_{\phi}f\|_{H^{\beta}(\M)} + \|f\|_{H^{\beta}(\M)} \leq C (\rho^{s - \beta}+1)\|f\|_{H^{\beta}(\M)} \leq C\rho^{s - \beta}\|f\|_{H^{\beta}(\M)}.\]
\end{proof}

Now we are poised to present the following theorem. 

\begin{theorem}\label{theorem:discretelaplaceest_rough}
Let $1\leq q \leq  \infty$, $\phi$ satisfy (\ref{eq:algdecay}) with $\tau > 3/2 + 2$, and define $s = \tau - 1/2$. Let $\beta$ be such that $s \geq \beta > 3$. Then there is a constant $h_{\M}$ depending only on $\M$ such that if a finite node set $X\subset\M$ satisfies $h\leq h_\M$, then for all $f\in H^\beta(\M)$ we have the estimate
\begin{equation}\label{eq:LXerror_rough}
\|L_{\M}f-\Delta_\M f\|_{L_q(\M)}\leq C  h^{\beta-2-2(1/2-1/q)_+}\rho^{2(s - \beta) + 1}\|f\|_{H^{\beta}(\M)}.
\end{equation}
\end{theorem}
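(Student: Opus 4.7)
The plan is to split the error into a piece that is the divergence-approximation error applied to a particular vector field and a piece that is the Laplacian of the interpolation error of $f$ itself. Concretely, recalling that $G_\M f = \Mgrad I_\phi f$ and $D_\M \vg = \Mgrad \cdot I_{\Phi} \vg$, I would use the decomposition
\[
L_\M f - \laps f = \bigl(D_\M G_\M f - \Mgrad \cdot G_\M f\bigr) + \Mgrad \cdot \bigl(G_\M f - \Mgrad f\bigr) =: \text{(A)} + \text{(B)},
\]
where (B) simplifies to $\laps(I_\phi f - f)$ since $G_\M f - \Mgrad f = \Mgrad(I_\phi f - f)$. The strategy is to bound (A) via the divergence part of Proposition \ref{prop:discretegraddiv} applied to the vector field $\vg := \Mgrad I_\phi f$, and to bound (B) via Proposition \ref{zeroslemma} (the zeros lemma) together with Lemma \ref{interpnorm_rough}.

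For term (B), since $I_\phi f - f$ vanishes on $X$, Proposition \ref{zeroslemma} with $\mu = 2$ and $t = \beta$ yields
\[
\|\laps(I_\phi f - f)\|_{L_q(\M)} \leq C \|I_\phi f - f\|_{W_q^2(\M)} \leq C h^{\beta - 2 - 2(1/2-1/q)_+}\|I_\phi f - f\|_{H^\beta(\M)};
\]
the hypothesis $\beta > 3$ ensures the admissibility constraint on $\mu$ and $t$ is satisfied for every $1\leq q \leq \infty$. Lemma \ref{interpnorm_rough} then contributes the factor $\rho^{s-\beta}$, producing a (B)-bound of order $h^{\beta-2-2(1/2-1/q)_+}\rho^{s-\beta}\|f\|_{H^\beta(\M)}$.

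For term (A), I would apply the divergence half of Proposition \ref{prop:discretegraddiv} to $\vg$ with smoothness index $\gamma := \beta - 1$, which satisfies $s \geq \gamma > 2$ since $\beta > 3$ and $s \geq \beta$. This produces a factor $h^{\gamma - 1 - 2(1/2-1/q)_+}\rho^{s-\gamma} = h^{\beta-2-2(1/2-1/q)_+}\rho^{s-\beta+1}$ multiplying $\|\vg\|_{\mathbf{H}^{\beta-1}(\M)} \leq C\|I_\phi f\|_{H^\beta(\M)}$. A second application of Lemma \ref{interpnorm_rough} bounds $\|I_\phi f\|_{H^\beta(\M)} \leq C\rho^{s-\beta}\|f\|_{H^\beta(\M)}$, so the two $\rho$ factors multiply to $\rho^{2(s-\beta)+1}$, exactly matching the rate in the statement.

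Combining (A) and (B), and using $\rho \geq 1$ together with $s \geq \beta$ so that $\rho^{s-\beta} \leq \rho^{2(s-\beta)+1}$, the (A) bound dominates and yields the claimed estimate. What I expect to be the main bookkeeping hurdle is the choice of the intermediate index $\gamma = \beta - 1$: it must be large enough to place $\vg$ within the regime where Proposition \ref{prop:discretegraddiv} applies (forcing $\beta > 3$ and, via $s \geq \gamma + 1$, the stronger smoothness requirement $\tau > 3/2 + 2$), yet small enough to leave one derivative for absorption by Lemma \ref{interpnorm_rough}. This two-stage invocation of Lemma \ref{interpnorm_rough} — once for $\|I_\phi f\|_{H^\beta(\M)}$ inside (A), and implicitly for the symbolic norm $\|I_\phi f - f\|_{H^\beta(\M)}$ in (B) — is the source of the doubled mesh-ratio factor $\rho^{2(s-\beta)}$ that distinguishes the discrete Laplacian estimate from the first-order estimates in Proposition \ref{prop:discretegraddiv}.
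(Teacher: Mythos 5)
Your proposal is correct and follows essentially the same route as the paper: the identical splitting $L_\M f-\laps f=(D_\M\vg-\Mgrad\cdot\vg)+\laps(I_\phi f-f)$ with $\vg=\Mgrad I_\phi f$, the zeros lemma for the interpolation-error term, and a two-stage use of Lemma \ref{interpnorm_rough} producing the factor $\rho^{2(s-\beta)+1}$. The only cosmetic difference is that you package the bound on $D_\M\vg-\Mgrad\cdot\vg$ by citing the divergence half of Proposition \ref{prop:discretegraddiv} with index $\beta-1$, whereas the paper re-runs the underlying zeros-lemma argument on $I_\Phi\vg-\vg$ directly; the two are equivalent.
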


\begin{proof}
First, we have
\begin{equation}\label{eq:discretelaplaceest1}
\|L_{\M}f-\Delta_\M f\|_{L_q(\M)}\leq \|L_{\M}f-\Delta_\M I_{\phi}f\|_{L_q(\M)} + \|\Delta_\M f - \Delta_\M I_{\phi}f\|_{L_q(\M)}.
\end{equation}
For the rightmost term, the assumption $\beta > 3$ allows us to use \cite[Theorem 4.12]{FuselierWright2011} to get:
\begin{eqnarray*}
\|\Delta_\M f - \Delta_\M I_{\phi}f\|_{L_q(\M)}&\leq& C \|f - I_{\phi}f\|_{W^{2}_q(\M)} \leq C h^{\beta-2 -2(1/2-1/q)_+}\rho^{s-\beta}\|f\|_{H^{\beta}(\M)}.
\end{eqnarray*}

For the other term in (\ref{eq:discretelaplaceest1}), we have
\begin{eqnarray}
\|L_{\M}f-\Delta_\M I_{\phi}f\|_{L_q(\M)} & = & \|\nabla_\M\cdot (I_{\Phi}(\nabla_\M I_{\phi}f)) -\Delta_\M I_{\phi}f\|_{L_q(\M)} \nonumber\\
& = & \|\nabla_\M\cdot (I_{\Phi}(\nabla_\M I_{\phi}f)) -\nabla_\M \cdot (\nabla_\M I_{\phi}f)\|_{L_q(\M)} \nonumber\\
&\leq & C\|I_{\Phi}(\nabla_\M I_{\phi}f) -\nabla_\M I_{\phi}f\|_{\mathbf{W}^{1}_q(\M)}.\label{eq:discretelaplaceest2}
\end{eqnarray}
Note that the last estimate is the interpolation error for the target function $\vg := \nabla_\M I_{\phi}f$. 

We claim that $\vg$ is in $\mathbf{H}^{t}(\M)$ for all $t < 2s - 2$. Since $\phi$ satisfies \eqref{eq:algdecay}, $I_{\phi}f \in H^{\nu}(\R^3)$ for all $\nu < 2\tau - 3/2$. By the trace theorem restricting to $\M$ puts $I_{\phi}f$ in  $H^{\nu - 1/2}(\M)$. Thus $\vg \in  \mathbf{H}^{\nu - 3/2}(\M)$ for all $\nu < 2\tau - 3/2$, which is equivalent to $\vg\in \mathbf{H}^{t}(\M)$ for all $t < 2s - 2$. In particular, $\vg \in \mathbf{H}^{\beta - 1}(\M)$. Also, since $\beta > 2$ then we have
\[1 \leq  \lceil (\beta - 1) - 2(1/2 - 1/q)_+ \rceil - 1,\]
and that $\beta-1 > 1$. This allows us to estimate (\ref{eq:discretelaplaceest2}) with Proposition \ref{zeroslemma} (with parameter $\mu = 1$ and target smoothness $t = \beta-1$) to get
\begin{eqnarray*}
\|I_{\Phi}(\nabla_\M I_{\phi}f) -\nabla_\M I_{\phi}f\|_{\mathbf{W}^{1}_q(\M)} &\leq& C h^{(\beta - 1) - 1-2(1/2-1/q)_+}\| I_{\Phi}\vg - \vg\|_{\mathbf{H}^{\beta-1}(\M)}.\\
& = & C h^{\beta - 2 -2(1/2-1/q)_+}\| I_{\Phi}\vg - \vg\|_{\mathbf{H}^{\beta-1}(\M)}.
\end{eqnarray*}
Further, since $\beta - 1>1$, we can also apply Lemma \ref{interpnorm_rough}, and with two applications of it we get
\begin{eqnarray}
\| I_{\Phi}\vg - \vg\|_{\mathbf{H}^{\beta-1}(\M)} & \leq & C\rho^{s-\beta+1}\| \vg\|_{\mathbf{H}^{\beta-1}(\M)} =  C\rho^{s-\beta+1}\| \sgrad I_{\phi}f\|_{\mathbf{H}^{\beta-1}(\M)}\label{eq:strongernorm}\\
& \leq  &  C\rho^{s-\beta+1}\|I_{\phi}f\|_{H^{\beta}(\M)}\leq  C\rho^{2(s-\beta)+1}\|f\|_{H^{\beta}(\M)}\nonumber.
\end{eqnarray}
This completes the proof.
\end{proof}
Continuing with our error analysis, we now shift our attention to target functions that are very smooth. First we will need a lemma. 
\begin{lemma}\label{extraorders}
Let $\phi$ satisfy (\ref{eq:algdecay}) with $\tau > 3/2$, and let $s = \tau - 1/2$. Then for all $f\in L_2(\M)$ such that $T^{-1}f \in L_2(\M)$ we have the following estimate 
\[
\|f - I_{\phi}f\|_{H^s(\M)}\leq C h^s\|T^{-1}f\|_{L_2(\M)}.
\]
Also, for all $\vf\in \vL_2(\M)$ such that $\vT^{-1}\vf \in \vL_2(\M)$ we have
\[
\|\vf - I_{\Phi}\vf\|_{\vH^s(\M)}\leq C h^s\|\vT^{-1}\vf\|_{\vL_2(\M)}.
\]
\end{lemma}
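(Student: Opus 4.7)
The plan is a concise two-move argument that reduces the lemma to a ``doubled'' $L_2(\M)$ interpolation bound already contained in the machinery of \cite{FuselierWright2011}. The hypothesis $T^{-1}f \in L_2(\M)$ places $f$ in the range of the integral operator $T$, so $f$ is morally twice as smooth as a generic native-space function; the extra regularity is extracted via one duality step in the RKHS.

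First I would use the norm equivalence $\mathcal{N}_\phi(\M) \cong H^s(\M)$ (guaranteed by \eqref{eq:algdecay} via Theorem 3.3 of \cite{FuselierWright2011}) to replace the $H^s$-norm on the left by the native-space norm, so it suffices to prove $\|f - I_{\phi}f\|_{\mathcal{N}_\phi(\M)} \le Ch^s\|T^{-1}f\|_{L_2(\M)}$. Since $I_{\phi}f$ is the orthogonal projection of $f$ onto $V_X = \mathrm{span}\{\phi(\cdot,\vx_j)\}_{j=1}^N$ in the native-space inner product, Pythagoras gives
\[
\|f - I_{\phi}f\|_{\mathcal{N}_\phi(\M)}^2 \;=\; \langle f - I_{\phi}f,\, f\rangle_{\mathcal{N}_\phi(\M)}.
\]
Writing $f = T(T^{-1}f)$ and invoking the basic reproducing-kernel identity $\langle u, T\psi\rangle_{\mathcal{N}_\phi(\M)} = \langle u,\psi\rangle_{L_2(\M)}$ (valid for $u\in\mathcal{N}_\phi(\M)$, $\psi\in L_2(\M)$, and immediate from either the spectral decomposition of $T$ or from $u(x) = \langle u, \phi(x,\cdot)\rangle_{\mathcal{N}_\phi}$), the right-hand side equals $\langle f - I_{\phi}f,\, T^{-1}f\rangle_{L_2(\M)}$. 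Cauchy--Schwarz then gives
\[
\|f - I_{\phi}f\|_{\mathcal{N}_\phi(\M)}^2 \;\le\; \|f - I_{\phi}f\|_{L_2(\M)}\,\|T^{-1}f\|_{L_2(\M)}.
\]

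The second move is to insert the doubled $L_2$ bound $\|f - I_{\phi}f\|_{L_2(\M)} \le Ch^{2s}\|T^{-1}f\|_{L_2(\M)}$, which is the $q=2$, zero-derivative case of the smooth-target estimate of Theorem 4.12 in \cite{FuselierWright2011} (structurally the function-valued analogue of Part 2 of Proposition \ref{prop:discretegraddiv}). Combining the two displays and taking square roots yields $\|f - I_{\phi}f\|_{\mathcal{N}_\phi(\M)} \le Ch^s\|T^{-1}f\|_{L_2(\M)}$, and reverting the norm equivalence concludes the scalar case. The vector-valued inequality then follows immediately by applying the scalar result to each component of $\vf$ and summing the squared bounds, since $I_{\Phi}\vf$ is defined componentwise.

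The principal obstacle is justifying the doubled $L_2$ estimate cited in the second step; this is where the doubled smoothness assumption $T^{-1}f \in L_2(\M)$ really pays off, as opposed to the generic native-space bound that would only yield $\|f - I_{\phi}f\|_{L_2(\M)} \lesssim h^s\|f\|_{\mathcal{N}_\phi(\M)}$ and consequently only $h^{s/2}$ on the native-space side. If a direct citation is inconvenient, the backup is a Nitsche--Aubin-type duality: write $\|f - I_{\phi}f\|_{L_2(\M)} = \sup_{\|g\|_{L_2}=1}\langle f - I_{\phi}f,\, g\rangle_{L_2(\M)}$, swap to the native-space pairing via the same reproducing-kernel identity, and use orthogonality of the interpolation error to replace $Tg$ by $Tg - I_\phi(Tg)$, at which point the sharp native-space projection error on both factors produces the $h^{2s}$. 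A minor side check is that $T^{-1}f \in L_2(\M)$ implies $f \in \mathcal{N}_\phi(\M)$, which follows from boundedness of $T^{1/2}$ on $L_2(\M)$.
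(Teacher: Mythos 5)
Your argument is correct, but it is genuinely different from what the paper does: the paper offers no argument at all for this lemma, simply observing that the scalar estimate is established inside the proof of Corollary 4.10 of \cite{FuselierWright2011} and that the vector case follows componentwise. You instead reconstruct the estimate via the classical Schaback-style ``doubling'' mechanism: identify $\|\cdot\|_{H^s(\M)}$ with the native-space norm (legitimate under \eqref{eq:algdecay} by the norm equivalence $\mathcal{N}_\phi(\M)=H^s(\M)$), use that $I_\phi$ is the native-space orthogonal projection onto $\mathrm{span}\{\phi(\cdot,\vx_j)\}$, shift one factor of $T$ across the pairing to land in $L_2(\M)$, and apply Cauchy--Schwarz; your side check that $T^{-1}f\in L_2(\M)$ forces $f\in\mathcal{N}_\phi(\M)$ is also right. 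The one place your route is slightly awkward is the closing step: you import the doubled bound $\|f-I_\phi f\|_{L_2(\M)}\leq Ch^{2s}\|T^{-1}f\|_{L_2(\M)}$ from the statement of Corollary 4.10, i.e.\ from a result whose own proof passes through exactly the $H^s$ estimate you are proving. This is not a logical circle, since the corollary is an established citable result, but the cleaner and fully self-contained finish is to apply the zeros lemma (Proposition \ref{zeroslemma} with $t=s$, $\mu=0$, $q=2$) to the residual $u=f-I_\phi f$, which vanishes on $X$, giving $\|f-I_\phi f\|_{L_2(\M)}\leq Ch^{s}\|f-I_\phi f\|_{H^s(\M)}$; inserting this into your Cauchy--Schwarz display and dividing by $\|f-I_\phi f\|_{\mathcal{N}_\phi(\M)}$ yields the lemma directly, which is presumably how the cited proof proceeds. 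Be aware that your proposed Nitsche--Aubin backup is circular as sketched: bounding $\|Tg-I_\phi(Tg)\|_{\mathcal{N}_\phi(\M)}$ by $Ch^s\|g\|_{L_2(\M)}$ \emph{is} the lemma applied to $Tg$, so that fallback cannot substitute for the citation, whereas the zeros-lemma finish can.
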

\begin{proof}
The first estimate is established in the proof of Corollary 4.10 in \cite{FuselierWright2011}. By working component-wise the second estimate follows from the first.
\end{proof}

\begin{theorem}\label{theorem:discretelaplaceest_smooth}
Let $1\leq q \leq  \infty$, $\phi$ satisfy (\ref{eq:algdecay}) with $\tau > 3/2 + 2$, and define $s = \tau - 1/2$. 
Let $f\in L_2(\M)$ be such that $T^{-1}f \in L_2(\M)$ and $\vT^{-1}\sgrad f \in \vL_2(\M)$. Then there is a constant $h_{\M}$ depending only on $\M$ such that if a finite node set $X\subset\M$ satisfies $h\leq h_\M$, we have the estimate
\begin{equation*}
\|L_{\M}f-\Delta_\M f\|_{L_q(\M)}\leq C  h^{2s-2-2(1/2-1/q)_+}\rho(\|T^{-1}f\|_{L_2(\M)} + \| \vT^{-1}\sgrad f\|_{\vL_2(\M)}).
\end{equation*}
\end{theorem}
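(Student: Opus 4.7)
The plan is to mirror the argument for Theorem~\ref{theorem:discretelaplaceest_rough}, but to substitute Lemma~\ref{extraorders} for Lemma~\ref{interpnorm_rough} wherever the extra smoothness of the target permits, so as to gain the additional factors of $h^s$ needed to reach the rate $h^{2s-2}$. The essential new ingredient is to interpose the intermediate quantity $B := \sgrad\cdot I_{\Phi}\sgrad f$, in which the \emph{outer} interpolant is applied directly to $\sgrad f$ rather than to $\sgrad I_{\phi}f$, and to split
\[
\|L_\M f - \Mlap f\|_{L_q(\M)} \leq \|L_\M f - B\|_{L_q(\M)} + \|B - \Mlap f\|_{L_q(\M)}.
\]

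The cross term is the easier one: bound $\|B - \Mlap f\|_{L_q(\M)} \leq C\|I_\Phi \sgrad f - \sgrad f\|_{\vect{W}^1_q(\M)}$, apply the vector-valued zeros lemma with $t = s$ and $\mu = 1$ (admissible since $s = \tau - 1/2 > 3$) to obtain $Ch^{s-1-2(1/2-1/q)_+}\|I_\Phi \sgrad f - \sgrad f\|_{\vH^s(\M)}$, and invoke Lemma~\ref{extraorders} directly on $\sgrad f$ (permitted by the hypothesis $\vT^{-1}\sgrad f \in \vL_2(\M)$) to introduce a further $h^s\|\vT^{-1}\sgrad f\|_{\vL_2(\M)}$. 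This term is thus of order $h^{2s-1-2(1/2-1/q)_+}\|\vT^{-1}\sgrad f\|_{\vL_2(\M)}$, within the claimed bound and with no $\rho$-factor.

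The main work is the first term. Setting $\mathbf{w} := \sgrad(I_\phi f - f)$ gives $L_\M f - B = \sgrad\cdot I_\Phi \mathbf{w}$, which is bounded by $C\|I_\Phi \mathbf{w}\|_{\vect{W}^1_q(\M)}$. Split this via the triangle inequality into $\|I_\Phi \mathbf{w} - \mathbf{w}\|_{\vect{W}^1_q(\M)} + \|\mathbf{w}\|_{\vect{W}^1_q(\M)}$. For the second summand, use $\|\mathbf{w}\|_{\vect{W}^1_q(\M)} \leq C\|I_\phi f - f\|_{W^2_q(\M)}$, then the zeros lemma with $t = s$, $\mu = 2$ (admissible since $s > 3$), then Lemma~\ref{extraorders} applied to $f$; this yields $Ch^{2s-2-2(1/2-1/q)_+}\|T^{-1}f\|_{L_2(\M)}$. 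For the interpolation residual, apply the zeros lemma at level $t = s-1$, $\mu = 1$ to get $Ch^{s-2-2(1/2-1/q)_+}\|I_\Phi\mathbf{w}-\mathbf{w}\|_{\vH^{s-1}(\M)}$; then Lemma~\ref{interpnorm_rough} with $\beta = s-1$ controls this by $C\rho\|\mathbf{w}\|_{\vH^{s-1}(\M)} \leq C\rho\|I_\phi f - f\|_{H^s(\M)}$, and a final application of Lemma~\ref{extraorders} produces the factor $h^s\|T^{-1}f\|_{L_2(\M)}$. The two pieces together give $Ch^{2s-2-2(1/2-1/q)_+}\rho\|T^{-1}f\|_{L_2(\M)}$. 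Adding the cross-term contribution (and absorbing the one extra power of $h$ there via $h \leq h_\M \leq 1$) produces the bound in the statement.

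The subtlety is to arrange that Lemma~\ref{interpnorm_rough} is invoked exactly once, and at the Sobolev level $t = s - 1$: the smoothness budget of $I_\phi f - f$ delivered by Lemma~\ref{extraorders} tops out at $H^s(\M)$, so pushing $\|I_\Phi \mathbf{w} - \mathbf{w}\|_{\vect{W}^1_q(\M)}$ through Lemma~\ref{interpnorm_rough} at level $\vH^s(\M)$ is not feasible. Choosing $t = s - 1$ costs one power of $h$ on the zeros-lemma side but introduces only the single factor $\rho^{s - (s-1)} = \rho$, which is precisely the one appearing in the conclusion; a naive two-fold application of Lemma~\ref{interpnorm_rough} (as in the proof of Theorem~\ref{theorem:discretelaplaceest_rough}) would yield $\rho^2$ and miss the claimed sharpness.
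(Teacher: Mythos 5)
Your proof is correct and is essentially the paper's own argument: it rests on the same three tools — the zeros lemma, a single application of Lemma \ref{interpnorm_rough} at Sobolev level $s-1$ (the sole source of the factor $\rho$), and Lemma \ref{extraorders} supplying the $h^{s}$ gains for both $f$ and $\sgrad f$ — with the same exponent bookkeeping. The differences are only organizational: you insert the intermediate quantity $\sgrad\cdot I_{\Phi}\sgrad f$ at the outer triangle inequality, whereas the paper first splits off $\Delta_\M I_{\phi}f$ and then performs the corresponding three-term split of $\|I_{\Phi}\vg-\vg\|_{\vH^{s-1}(\M)}$, and you re-derive the $\|I_{\phi}f-f\|_{W^2_q(\M)}$ estimate from the zeros lemma plus Lemma \ref{extraorders} instead of citing it from \cite{FuselierWright2011}.
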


\begin{proof}
We begin as in the proof of the previous theorem. We have
\begin{equation}\label{eq:discretelaplaceest2_smooth}
\|L_{\M}f-\Delta_\M f\|_{L_q(\M)}\leq \|L_{\M}f-\Delta_\M I_{\phi}f\|_{L_q(\M)} + \|\Delta_\M f - \Delta_\M I_{\phi}f\|_{L_q(\M)}.
\end{equation}
To estimate the rightmost term we can use the error estimates in \cite[Corollary 4.10]{FuselierWright2011} to get:
\begin{eqnarray*}
\|\Delta_\M f - \Delta_\M I_{\phi}f\|_{L_q(\M)}&\leq& C \|f - I_{\phi}f\|_{W^{2}_q(\M)} \leq C h^{2s-2-2(1/2-1/q)_+}\|T^{-1}f\|_{L_2(\M)}.
\end{eqnarray*}

For the other term in \eqref{eq:discretelaplaceest2_smooth}, we proceed as in the proof of Theorem \ref{theorem:discretelaplaceest_rough} to get 
\begin{eqnarray}
\|L_{\M}f-\Delta_\M I_{\phi}f\|_{L_q(\M)} &\leq & C\|I_{\Phi}(\vg) -\vg\|_{\mathbf{W}^{1}_q(\M)},\label{eq:discretelaplaceest3}
\end{eqnarray}
where $\vg = \nabla_\M I_{\phi}f$, and as before we know that this function is in $\mathbf{H}^{t}(\M)$ for all $t < 2s - 2$. In particular, $\vg \in \mathbf{H}^{s-1}(\M)$ and 
we can estimate (\ref{eq:discretelaplaceest3}) with Proposition \ref{zeroslemma} (with $t=s-1$ and $\mu =1$) to get
\begin{eqnarray*}
\|I_{\Phi}(\vg) -\vg\|_{\mathbf{W}^{1}_q(\M)} &\leq& C h^{s - 2 -2(1/2-1/q)_+}\| I_{\Phi}\vg - \vg\|_{\mathbf{H}^{s-1}(\M)}.
\end{eqnarray*}
This is where the proof detours from that of the previous theorem. To estimate $\| I_{\Phi}\vg - \vg\|_{\mathbf{H}^{s-1}(\M)}$, note that it is bounded it by following quantity:
\[
\underbrace{\|I_{\Phi}(\sgrad I_{\phi}f) - I_{\Phi}(\sgrad f)\|_{\mathbf{H}^{s-1}(\M)}}_{I} + \underbrace{\|I_{\Phi}(\sgrad f) - \sgrad f\|_{\mathbf{H}^{s-1}(\M)}}_{II} + \underbrace{\|\sgrad f - \sgrad I_{\phi}f\|_{\mathbf{H}^{s-1}(\M)}}_{III}.\]
We will bound each term individually. First we concentrate on $I$. An application of Lemma \ref{interpnorm_rough}, which we may apply since $s - 1 > 1$, gives us
\begin{eqnarray*}
\|I_{\Phi}(\sgrad I_{\phi}f) - I_{\Phi}(\sgrad f)\|_{\mathbf{H}^{s-1}(\M)} & = &\|I_{\Phi}(\sgrad I_{\phi}f - \sgrad f)\|_{\mathbf{H}^{s-1}(\M)} \\
& \leq & C\rho \|\sgrad I_{\phi}f - \sgrad f\|_{\mathbf{H}^{s-1}(\M)} = C\rho(III).
\end{eqnarray*}
Thus a bound for $III$ will result in a bound for $I$. To bound $III$, we can apply Lemma \ref{extraorders} to get 
\begin{eqnarray*}
\|\sgrad I_{\phi}f - \sgrad f\|_{\mathbf{H}^{s-1}(\M)}&\leq& C \|I_{\phi}f - f\|_{\mathbf{H}^{s}(\M)} \leq h^{s}\|T^{-1}f\|_{L_2(\M)}.
\end{eqnarray*}
To bound $II$, we again employ Lemma \ref{extraorders}:
\begin{eqnarray*}
\|I_{\Phi}(\sgrad f) - \sgrad f\|_{\mathbf{H}^{s-1}(\M)} & \leq & \|I_{\Phi}(\sgrad f) - \sgrad f\|_{\mathbf{H}^{s}(\M)} \leq C h^s\|\vT^{-1}\sgrad f\|_{\vL_2(\M)}.
\end{eqnarray*}
This completes the proof.
\end{proof}


\section{Surfaces and node sets from the numerical experiments} \label{apndx:Surfaces}

\subsection{Unit sphere}\label{apndx:sphere}
This manifold is, of course, described implicitly by 
\begin{align}
\mathbb{M} = \left\{\vx = (x,y,z) \in \R^3 \;\left| x^2 + y^2 + z^2 = 1 \right. \right\}.
\label{eq:sphere}
\end{align}
The node sets we use for discretizing the unit sphere are the minimum energy (ME) node sets of Womersley and Sloan~\cite{WomerSloanSpherePts}.  These node sets are approximately uniformly distributed over the surface of the sphere and have the nice property that the mesh norm $h$ and the separation radius $q$ decrease uniformly like the inverse of the square root of the number of nodes $N$, i.e. $h,q \sim \frac{1}{\sqrt{N}}$.  Additionally, the nodes in these sets are not oriented along any vertices or lines, which emphasizes the ability of our method to handle arbitrary node layouts.  They have been used quite successfully in many other RBF applications, e.g.~\cite{NarcWardWright:2007DivFreeRBFsSurfaces,FuselierNarcWardWright:2008DivFreeRBFConvergence,FuselierWright:2009VectorDecomposition,FlyerWright09,FornbergLehto2011,FlyerWright07,WrightFlyerYuen}.

\subsection{Red blood cell}
This manifold is a mathematical model for human red blood cells in static equilibrium conditions.  The model was first proposed in~\cite{EvansFung1972} and has been used in many subsequent studies (e.g.~\cite{LiuLiu2006}).  The model can be described parametrically as follows:
\begin{align}
\mathbb{M} = \left\{(x,y,z)\in\R^3\;\left| x = r_0\cos\lambda\cos\theta,\; y = r_0\sin\lambda\cos\theta,\; z = \frac{1}{2}\sin\theta\lp c_0 + c_2\cos^2\theta + c_4\cos^4\theta\rp\right.\right\},
\label{eq:rbc}
\end{align}
where $-\pi/2 \leq \theta \leq \pi/2$, $-\pi \leq \lambda < \pi$, $r_0 = 3.91/3.39$, $c_0 = 0.81/3.39$, $c_2 = 7.83/3.39$, and $c_4=-4.39/3.39$.
The node sets we used for discretizing this manifold were obtained using a radial projection of the ME points for the surface of the sphere described above.

\subsection{Bumpy sphere}
This manifold is constructed from the ``bumpy sphere'' surface~\cite{BumpySphere} using the following procedure.  First, $N=5256$ points on the bumpy sphere surface were obtained from \url{http://shapes.aimatshape.net}.  This surface is homeomorphic to the unit sphere, so spherical coordinates for each of the $N=5256$ points on the bumpy sphere were obtained.  Upon normalizing the original nodes by the maximum distance of any of the nodes from the origin, a parametric model for the surface was constructed using the RBF geometric modeling technique from~\cite{ShankarEtAl2011}.  The original (normalized) $N=5256$ points on the bumpy sphere are used as the computational nodes in all the numerical experiments and the parametric model is used for computing the normal vectors to the surface.

\subsection{Torus}\label{appndx:torus}
This manifold is given by the implicit equation:
\begin{align}
\mathbb{M} = \left\{\vx = (x,y,z) \in \R^3 \;\left| \left(1 - \sqrt{x^2 + y^2}\right)^2 + z^2 - \frac{1}{9} = 0 \right. \right\}.
\label{eq:Torus}
\end{align}
The node sets used for discretizing this manifold were obtained by arranging the nodes so that their Reisz energy (with a power of 2) is near minimal as described in Hardin and Saff's seminal article~\cite{HarinSaff04}.  As with the ME sphere nodes, these ME torus nodes sets provide a near uniform discretization of the manifold with an approximately uniform decrease in the mesh norm $h$ and the separation radius $q$.  The node sets used in the experiments were kindly given to us by Drs. Douglas Hardin and Edward Saff and Ms. Ayla Gafni from Vanderbilt University.

\subsection{Dupin's cyclide}
This manifold is given by the implicit equation:
\begin{align}
\mathbb{M} = \left\{\vx = (x,y,z) \in \R^3 \;\left| \lp x^2 + y^2 + z^2 - d^2 + b^2 \rp^2 - 4\lp a x + c d \rp^2 - 4 b^2 y^2 = 0 \right. \right\},
\label{eq:cyclide}
\end{align}
where $a=2$, $b=1.9$, $d=1$, and $c^2 = a^2 - b^2$.  The node set for discretizing this manifold was obtained from the algorithm of Palais, Palais, and Karcher (PPK)~\cite{PalaisPalaisKarcher}, which will be included in a future version of the amazing mathematical visualization software package 3-D-XplorMath~\cite{3DXplorMath}.  This algorithm generates ``uniformly random'' point clouds for surfaces and works for both implicit and parametrically defined surfaces.  To obtain the node set displayed in Figure \ref{fig:ex_surfaces}(e) we used the PPK algorithm to generate a point cloud consisting of 9562 points.  We then thinned this point set by removing points that were too close together in order to increase the separation radius.  The final node set ended up at $N=4948$.  Both choices for the starting and ending number of nodes were chosen somewhat arbitrarily. 

\subsection{Bretzel2}
This manifold is given by the implicit equation:
\begin{align}
\mathbb{M} = \left\{\vx = (x,y,z) \in \R^3 \;\left| \lp x^2(1-x^2)-y^2 \rp^2 + \frac12 z^2 = \frac{1}{40} \right. \right\}.
\label{eq:bretzel2}
\end{align}
Unlike the the other surfaces, this one does not have an explicit parameterization.  The node set for this domain was also obtained from the PPK algorithm.   In this case we started with a point cloud of 10278 points and thinned it to end up with a node set of $N=5041$ nodes.


\end{document}